\newcommand{\la}{\langle}
\newcommand{\ra}{\rangle}
\theoremstyle{plain}
\newtheorem{theorem}{Theorem}[section]
\newtheorem{lemma}[theorem]{Lemma}
\newtheorem{proposition}[theorem]{Proposition}
\newtheorem{corollary}[theorem]{Corollary}
\theoremstyle{definition}
\newtheorem{remark}[theorem]{Remark}
\newtheorem{definition}[theorem]{Definition}
\numberwithin{equation}{section}
\newcommand{\tr}{\operatorname{tr}}
\newcommand{\cA}{\mathcal{A}}
\newcommand{\cB}{\mathcal{B}}
\newcommand{\cC}{\mathcal{C}}
\newcommand{\cD}{\mathcal{D}}
\newcommand{\cE}{\mathcal{E}}
\newcommand{\cF}{\mathcal{F}}
\newcommand{\cH}{\mathcal{H}}
\newcommand{\cK}{\mathcal{K}}
\newcommand{\cL}{\mathcal{L}}
\newcommand{\cN}{\mathcal{N}}
\newcommand{\cM}{\mathcal{M}}
\newcommand{\cP}{\mathcal{P}}
\newcommand{\cS}{\mathcal{S}}
\newcommand{\bB}{\mathbb{B}}
\newcommand{\bC}{\mathbb{C}}
\newcommand{\bE}{\mathbb{E}}
\newcommand{\bJ}{\mathbb{J}}
\newcommand{\bK}{\mathbb{K}}
\newcommand{\bN}{\mathbb{N}}
\newcommand{\bP}{\mathbb{P}}
\newcommand{\bR}{\mathbb{R}}
\newcommand{\bS}{\mathbb{S}}
\newcommand{\bX}{\mathbb{X}}
\newcommand{\fF}{\mathfrak{F}}
\newcommand{\sD}{\mathsf{D}}
\newcommand{\sM}{\mathsf{M}}
\newcommand{\sN}{\mathsf{N}}
\newcommand{\sR}{\mathsf{R}}
\newcommand{\sT}{\mathsf{T}}
\newcommand{\gf}{g_{\flat}}
\newcommand{\gs}{g^{\sharp}}
\def\Re{\mathcal{R}}
\def\Rek{\mathcal{R}_k}
\def\Rc{\mathcal{R}^c}
\def\Rck{\mathcal{R}^c_k}
\def\kfk{\varphi^*_k}
\def\kbk{(\varphi_k)_*}
\newcommand{\PH}{\mathbb{P}_H}
\newcommand{\gd}{\mathsf{grad}\,}
\newcommand{\dv}{\mathsf{div}\,}
\newcommand{\Div}{\mathrm{div}}
\newcommand{\Ric}{\mathsf{Ric}}
\newcommand{\Aw}{A^\mathsf{w}}
\newcommand{\Fw}{F^\mathsf{w}}
\newcommand{\Xw}{X^\mathsf{w}}
\newcommand{\sw}{\mathsf{w}}
\newcommand{\Xwm}{X^\mathsf{w}_{\gamma,\mu_c^{\mathsf{w}}}}
\newcommand{\mwc}{\mu_c^\mathsf{w}}
\newcommand{\Lis}{\mathcal{L}is}
\begin{document}
\title[The Navier-Stokes equations on manifolds with boundary]{The Navier-Stokes equations  on manifolds with boundary}

\author{Yuanzhen Shao}
\address{The University of Alabama\\
	Tuscaloosa, Alabama \\
	USA}
\email{yshao8@ua.edu}

\author{Gieri Simonett}
\address{
        Vanderbilt University\\
        Nashville, Tennessee\\
        USA}
\email{gieri.simonett@vanderbilt.edu}

\author{Mathias Wilke}
\address{
        Martin-Luther-Universit\"at Halle-Wittenberg\\
        Halle (Saale)\\
        Germany}
\email{mathias.wilke@mathematik.uni-halle.de}

\thanks{This work was supported by a grant from the Simons Foundation (\#853237, Gieri Simonett), a grant from the National Science Foundation (DMS-2306991, Yuanzhen Shao), and a CARSCA grant from the University of Alabama (Yuanzhen Shao).}

\subjclass[2020]{Primary: 35Q30, 35Q35, 35B35.  Secondary: 76D05}



\keywords{Navier boundary conditions,  Ricci curvature, Killing fields, $H^\infty$-calculus and critical spaces, well-posedness, stability.}

\begin{abstract}
We consider the motion of an incompressible viscous fluid on a compact Riemannian manifold $\sM$ with boundary.
The motion  on $\sM$ is modeled by the incompressible Navier-Stokes equations, and the fluid is subject to pure or partial slip boundary conditions
of Navier type on $\partial\sM$.
We establish existence and uniqueness  of strong as well as weak (variational) solutions for initial data in critical spaces. Moreover, we show that the set of equilibria consists of Killing vector fields on $\sM$ that satisfy corresponding boundary conditions, and we prove that all equilibria are (locally) stable. In case $\sM$ is two-dimensional we show that solutions with divergence free initial condition in $L_2(\sM; T\sM)$ exist globally and converge to an equilibrium exponentially fast.
\end{abstract}

\maketitle

\section{Introduction}\label{S:Intro}
Suppose that $\sM$ is a  compact, smooth, connected and oriented $n$-dimensional Riemannian manifold with boundary $\Sigma=\partial\sM$.
It follows that
$\Sigma$ is a compact, smooth, orientable $(n-1)$-dimensional manifold.
$\Sigma$ is then provided with outward orientation with respect to $\sM$.
Let $(\cdot | \cdot)_g$ denote the Riemann metric on $\sM$.
In the sequel, we also use the notation $(\cdot | \cdot)_g$ for the induced Riemann metric on $\Sigma$.
We will study the motion of  an incompressible viscous fluid on $\sM$, modeled by the
{\em surface Navier-Stokes equations} with {\em Navier boundary conditions}
which can be stated as follows
\begin{equation}
\label{NS-sys}
\left\{\begin{aligned}
\varrho \left( \partial_t u + \nabla_u  u\right) -2\mu_s \dv D(u) + \gd\pi &=0 &&\text{on}&&\sM ,\\
\dv u&=0 &&\text{on}&&\sM ,\\
\alpha u + \cP_\Sigma \left((\nabla u + [\nabla u]^{\sf T})\nu_\Sigma \right)  &=0 &&\text{on}&&\Sigma  , \\
(u | \nu_\Sigma)_g &=0 &&\text{on}&&\Sigma  , \\
u(0) & = u_0 &&\text{on}&& \sM. \\
\end{aligned}\right.
\end{equation}
Here, the unknowns are the fluid velocity $u$  and the fluid pressure $\pi$.  $\varrho>0$ is the (constant) density, $\mu_s>0$ is the surface shear viscosity,
$\nu_\Sigma$ is the outward unit normal field of $\Sigma$,
while
 $\cP_\Sigma$ is the orthogonal projection onto the tangent bundle of $\Sigma$, and the constant $\alpha\geq 0$ is a given friction parameter.
In the following, we assume without loss of generality that $\varrho=1$.

Moreover, $\nabla_u v$ denotes the covariant derivative induced by the  Levi-Civita connection of $\sM$ for given tangent vectors $u,v$, and
$
D(u):= \frac{1}{2} (\nabla u+ [\nabla u]^{\sT} )^\sharp $
denotes the deformation tensor  (a definition of the operator $^\sharp$ is provided in Appendix \ref{Appendix A}),
given in local coordinates by
\begin{equation*}
D(u)   = \frac{1}{2} \left(g^{jk} u^i_{|k} + g^{ik} u^j_{|k}\right)\frac{\partial}{\partial x^i}\otimes \frac{\partial }{\partial x^j},
\end{equation*}
with $u^i_{|k}$  being covariant derivatives, that is,
\begin{equation*}
u^i_{|k} = \partial_k u^i + \Gamma^i_{k\ell}  u^\ell \quad \text{for} \quad u=u^i \frac{\partial}{\partial x^i}.
\end{equation*}
 Here and throughout this article, we are using the Einstein summation convention, indicating that terms with repeated indices are added.

\goodbreak
\noindent
We note here that
$$D_u:= \frac{1}{2} \left( \nabla u+ [\nabla u]^{\sT} \right)$$
is a $(1,1)$-tensor, while $D(u)$ is a $(2,0)$-tensor.
In case $\dv u=0$, it is well-known, see for instance \cite[Lemma 2.1]{SaTu20}, that
\begin{equation}
\begin{split}
\label{div-D-local}
2\, \dv D(u) = \Delta_{\sM} u +  \Ric^\sharp u,
\end{split}
\end{equation}
where $\Delta_\sM$ denotes the (negative) Bochner Laplacian (sometimes also called the connection Laplacian), and $\Ric^\sharp$
is the Ricci $(1,1)$-tensor.
In local coordinates, these operators are expressed by
\begin{equation}
\label{Bochner-Ricci-local}
\Delta_\sM u= g^{ij}(\nabla_i \nabla_j -\Gamma_{ij}^k \nabla_k)u , \quad
 \Ric^\sharp u = R^i_j u^j\frac{\partial} {\partial x^i}:=g^{ik} R_{kj} u^j \frac{\partial} {\partial x^i},
\end{equation}
with $\nabla_j=\nabla_{\frac{\partial}{\partial x^j}}$ being covariant derivatives, and where $\Ric = R_{ij} dx^i\otimes dx^j$ is the usual Ricci $(0,2)$-tensor.
 More details are given in Appendix A.

\eqref{NS-sys}$_3$ and \eqref{NS-sys}$_4$ is termed the {\em pure slip} boundary condition in case $\alpha=0$, or {\em partial slip} boundary condition in case $\alpha>0$.

In case $\sM=\bR^n_+:=\bR^{n-1}\times (0,\infty)$, the boundary conditions for $u=(u^1,\ldots, u^n)$ on $\Sigma=\bR^{n-1}$  result in
\begin{equation*}
\begin{aligned}
u^n=0,\qquad  (\partial_j u^n + \partial_n u^j) - \alpha u^j =0,\quad j=1,\ldots, n-1,
\end{aligned}
\end{equation*}
which, taking into account  the relation $u^n=0$, further reduce to
\begin{equation*}
\begin{aligned}
u^n=0,\qquad  \partial_n u^j - \alpha u^j =0,\quad j=1,\ldots, n-1.
\end{aligned}
\end{equation*}
This implies
$u^j(x^\prime,x^n)\approx (1+ \alpha x^n) u^j(x^\prime,0)$
for small $x^n>0$, showing the friction effect on $\Sigma$ for tangential velocity components in case $\alpha>0$.

\medskip\noindent
The topic of fluids on surfaces and Riemannian manifolds has recently attracted attention by numerous authors,
see for instance \cite{CCD17, JaOlRe17, MiMo09, OQRY18, Pri94, PrSiWi21, ReZh13, SaTu20, SiWi22} and the references contained
in these publications.

One application concerns the modeling of emulsion and biological membranes, see  \cite{SlSaLe07}.
In addition,  \eqref{NS-sys} may  be considered as a model for the motion
of a fluid on a planet's surface that is covered by water and landmasses (while the effect of Coriolis forces is being ignored).

\medskip
The main results in this manuscript establish existence, uniqueness, and qualitative properties of strong as well as weak (variational)
solutions to  \eqref{NS-sys}.
 The expression `(variational) weak solutions' is used here to distinguish our solutions from the class of Leray-Hopf weak solutions.
Our approach is based on the method of $L_p$-$L_q$ maximal regularity in time weighted spaces, see for instance~\cite{PruSim16}.

\medskip
In Sections \ref{Section:NS with perfect slip boundary condition} and \ref{Stokes-H-infity},
we demonstrate that the Stokes operator associated with \eqref{NS-sys}  admits a bounded $H^\infty$-calculus
with angle $<\pi/2$ (a property that implies maximal regularity)
in  $L_{q,\sigma}(\sM; T\sM)$ as well as in  $H^{-1}_{q,\sigma}(\sM;T\sM)$.
This property opens up the way to obtain unique solutions to \eqref{NS-sys} for initial data in critical spaces,
as shown in Section~\ref{Section:existence and uniqueness}, Theorem~\ref{Thm: local wellposed}, Corollary~\ref{Cor:existence}, and Remark~\ref{Rmk: two cases wellposed}.

\medskip
In Section~\ref{Section:large time behavior}, we show that the set of equilibria of \eqref{NS-sys} consists exactly of all Killing fields on $\sM$ which satisfy the boundary conditions imposed on solutions, see Proposition~\ref{Prop: equilibrium set}.
In particular,  we show that in case of a positive friction coefficient $\alpha$, equilibria correspond to the situation where the fluid is at rest.

One of the main results of this paper is contained in Theorem~\ref{Thm: gloabl existence and convergence 2D}.
It shows that in case ${\rm dim}\,\sM=2$, any solution with initial value
$u_0\in L_{2,\sigma}(\sM; T\sM)$ exists globally and converges to an equilibrium at an exponential rate.
Moreover, in case ${\rm dim}\,\sM>2$,  we show in Theorem~\ref{Thm: stability near killing 1} and Corollary~\ref{cor:stability}
that all equilibria are locally stable: solutions that start out close to an equilibrium exist globally and converge
at an exponential rate to a (possibly different) equilibrium.

\medskip\noindent
We add three examples to  illustrate the scope of our results for two-dimensional surfaces in $\bR^3$.
For $\alpha\ge 0$, let $\cE_\alpha$ denote the set of equilibria of \eqref{NS-sys}.\\
\goodbreak
{\bf Examples:}
\begin{itemize}
\item[(a)] Let $\sM= \bS^2_+=\{x=(x_1,x_2,x_3)\in \bS^2: x_3>0\}$ be the upper hemisphere in $\bR^3$.
Then
\begin{equation*}
\cE_\alpha =
\left\{
\begin{aligned}
&\{0\}\quad &&\text{in case } \alpha>0, \\
&\{\omega e_3 \times x : \omega\in \bR,\ x\in \bS^2_+\} &&\text{in case } \alpha=0.
\end{aligned}\right.
\end{equation*}
That is, in case of pure slip boundary conditions, the equilibria correspond to the situation where the fluid rotates with constant angular speed $\omega$ about the $z$-axis.

Theorem~\ref{Thm: gloabl existence and convergence 2D} says that in case $\alpha>0$, any solution with initial value $u_0\in L_{2,\sigma}(\sM; T\sM)$ exists globally and converges at
an exponential rate to the equilibrium state $u_\infty=0$.

If $\alpha=0$, any solution with initial value $u_0\in L_{2,\sigma}(\sM; T\sM)$ exists globally and converges at
an exponential rate to an equilibrium state $u_\infty = \omega e_3 \times x$, for some $\omega \in \bR$
which is determined by Theorem~\ref{Thm: gloabl existence and convergence 2D}.
\vspace{1mm}
\item[(b)] Analogous results hold in case $\sM$ is a disk in $\bR^2$ with center at the origin (embedded in $\bR^3$).
\vspace{1mm}
\item[(c)] Let $\sM=\{x=(x_1,x_2,x_3)\in \bR^3: x_1^2 + x_2^2=1, 0< x_3 <1\}$ be a cylinder of finite height.
Then analogous results to Example (a) hold.
\end{itemize}
It is interesting to note that even in the simple Euclidean setting of Example~(b), the results seem to be new, at least in the case where $\alpha=0$.

For surfaces, in  case $\alpha>0$, the global convergence results is based on the
 fact that all Killing fields are trivial, see Proposition \ref{Prop: characterizing Ealpha}.
In case $\alpha=0$, the results follow from the somewhat surprising observation that the evolution equation
leaves the orthogonal space to Killing fields invariant, see Lemma \ref{Lem: aux global}(a),
and  from Korn's inequality, see Lemma \ref{Appendix Lem: korn}.

\medskip\noindent
As another application of Theorem~\ref{Thm: stability near killing 1}
 we consider the three-dimensional manifold $\sM $ consisting of a solid ball in $\bR^3$ with center at the origin.
 Theorem~\ref{Thm: stability near killing 1} and Corollary~\ref{cor:stability} then show that rotations about any axis through the origin are stable:
 solutions that start close to a rotation exist globally and converge to a (possibly different) rotation. We are not aware of a corresponding result in the literature.

\medskip
In the appendices A through D we collect and prove results concerning Riemannian manifolds (with boundary), Green's formula, Korn's inequality,
solvability of elliptic problems
and the existence of the Helmholtz projection, interpolation for mixed boundary conditions, sectorial operators and  the $H^\infty$-calculus.
These results are used throughout the manuscript and are also of independent interest.

\medskip
In case $\sM$ is an embedded hypersurface in $\bR^{n+1}$ without boundary,
the motion of an incompressible fluid has been considered in the literature by several authors.
Here we refer to the article  \cite{CCD17} for a survey and a comprehensive list of references.
We also mention that the equations in \eqref{NS-sys}$_1$ and \eqref{NS-sys}$_2$
 coincide with the system
 \begin{equation*}
\left\{\begin{aligned}
\partial_t u + \cP_\sM( u \cdot \nabla_\sM u) -\cP_\sM \Div_\sM (2\mu_s \cD_\sM(u)-\pi \cP_\sM) &=0 &&\text{on}&&\sM ,\\
\Div_\sM u &=0 && \text{on}&&\sM ,\\
\end{aligned}\right.
\end{equation*}
considered in \cite{PrSiWi21, SiWi22}, see for instance \cite[Remarks A.3]{PrSiWi21}.
In addition, we mention the publications~\cite{JaOlRe17, OQRY18, ReZh13,ReVo18} and the references
contained therein for interesting numerical investigations for embedded surfaces in $\bR^3$ without boundary.

\medskip
For the case of a Riemannian manifold with boundary, we are aware of the publications \cite{Pri94,MiMo09}.
The author in \cite{Pri94} considers Navier boundary conditions, and  he examines the equations in a variational framework,
mostly concentrating on the stationary linear case.
In \cite{MiMo09}, the authors show that the Hodge-Laplacian subject to Neumann-type boundary conditions on a Lipschitz subdomain
of a smooth, compact, boundaryless Riemannian manifold generates an analytic semigroup
on $L_q$ for $q$ in some open interval containing $(3/2,3)$.

In case of a domain contained in Euclidean space, the Navier-Stokes equations with Navier boundary conditions have been considered
by numerous  authors, and we refer to \cite{PrWi18} for a discussion.

\medskip
The novelty of this manuscript lies in the fact that we consider the behavior of fluids on surfaces, or manifolds, with boundaries.
This situation extends traditional fluid dynamics analysis, which typically focuses on the Euclidean space. By studying fluids on manifolds, we are addressing a more complex scenario that also has applications.

For instance, this  is  relevant when analyzing the motion of water on a  planet that is covered by both oceans and continents. In such a context, the surface of the planet can be modeled as a manifold with boundaries, representing land  and sea.

In this situation, the analysis becomes considerably more complex than in the Euclidean case. Unlike in flat space, one must account for the manifold's geometric properties, which introduce additional mathematical challenges. Specifically, we need to handle geometric quantities such as the Ricci curvature,
which incorporates how the manifold's shape deviates from being flat. These geometric considerations play a role for describing the behavior of fluids on curved surfaces.

When dealing with an impermeable boundary  $\Sigma$, the most widely employed boundary condition in the literature is the no-slip condition, expressed as
\begin{equation}
\label{Dirichlet BC}
u=0 \quad \text{on}\quad \Sigma.
\end{equation}
In contrast, the Navier boundary condition \eqref{NS-sys}$_3$ and \eqref{NS-sys}$_4$ permits tangential slip along the boundary.
Over recent decades, a growing debate has emerged concerning the choice between the no-slip  condition and the Navier  condition, primarily due to the so-called no-collision paradox.
Consider a rigid body in free fall within a fluid bounded by a solid wall.
 In case the rigid body and the wall have a smooth boundary, previous research \cite{GeradHill, Hill07, HillTaka} has demonstrated that under the assumption  \eqref{Dirichlet BC}, the rigid body does not reach the fluid-solid interface in finite time, regardless of the relative densities of the fluid and the object. In contrast, assuming a Navier boundary condition circumvents such a situation \cite{GeradHillWang}.

Although we would expect similar results for the no-slip boundary conditions \eqref{Dirichlet BC} as for the case of partial slip with $\alpha>0$,
the  approach used here does not cover  \eqref{Dirichlet BC}.

\bigskip\noindent
{\bf Notation.}
Given $q\in (1,\infty)$, $q'=q/(q-1)$ always denotes the H\"older conjugate of $q$.

Let $X$ and $Y$ be two Banach spaces and $T: X\to Y$. We denote by $\sD(T)$, $\sN(T)$ and $\sR(T)$ the domain, null space and   range of $T$, respectively.
The notation $\cL(X,Y)$ stands for the set of all bounded linear operators from $X$ to $Y$ and $\cL(X):=\cL(X,X)$.
$\Lis(X,Y)$ denotes the subset of $\cL(X,Y)$ consisting of linear isomorphisms from $X$ to $Y$.
Moreover, we denote by $X^\prime=\cL(X,\bR)$ the dual of $X$.

For any $0\leq t_1<t_2<\infty$, $p\in (1,\infty)$ and $\mu\in (1/p,1]$, the $X$-valued $L_p$-spaces with temporal weight are defined by
$$
L_{p,\mu}((t_1,t_2);X):=\left\{ f: (t_1,t_2)\to X: \, t\mapsto t^{1-\mu}f(t)\in L_p((t_1,t_2);X)  \right\}.
$$
Similarly,
$$
H^k_{p,\mu}((t_1,t_2);X):=\left\{ f \in    W^k_{1,loc}((t_1,t_2);X):\,  \partial_t^j f\in L_{p,\mu}((t_1,t_2);X), \, j=0,1,\ldots,k \right\}.
$$

\section{The surface Stokes operator with Navier boundary conditions}
\label{surface Stokes operator}
To analyze \eqref{NS-sys},
we introduce the {\em surface Helmholtz projection}, defined by
$$
\PH u= u-\gd \psi_u, \quad u\in L_q(\sM; T\sM),
$$
where $\gd\psi_u \in L_q(\sM; T\sM)$ is the unique solution of
$$
(\gd \psi_u | \gd \phi)_{\sM}=(u|\gd \phi)_{\sM} ,\quad \forall \phi\in \dot{H}^1_{q'}(\sM),
$$
cf. Lemma~\ref{Apendix Lemma Helmholtz}.
Here,
$$
 (u | v)_\sM:=  \int_\sM (u|v)_g \, d\mu_g,\quad (u,v)\in L_r(\sM;T\sM)\times L_{r^\prime}(\sM; T\sM),
$$
denotes the  duality pairing between $L_q(\sM; T\sM)$ and $L_{q^\prime}(\sM; T\sM)$.
We note that  in case $q=2$,  the pairing $(\cdot | \cdot )_\sM$ defines an inner product on $L_2(\sM; T\sM)$.

\medskip\noindent
For any $u\in L_q(\sM; T\sM)$ and $v\in L_{q'}(\sM; T\sM)$ it holds
\begin{equation}
\label{PH-symmetric}
\begin{aligned}
(\PH u | v)_{\sM}&= (u - \gd \psi_u | v)_{\sM} = (u|v)_{\sM} - (\gd \psi_u | v)_{\sM}\\
&= (u|v)_{\sM} - (\gd \psi_u | \gd \psi_v)_{\sM}= (u|v)_{\sM} - (u | \gd \psi_v)_{\sM}\\
&= (u| \PH v)_{\sM}
\end{aligned}
\end{equation}
as $\psi_u\in \dot{H}^1_q(\sM)$ and $\psi_v \in \dot{H}^1_{q'}(\sM)$.
Note that the definition of $\PH$ implies
\begin{equation}
\label{sigma-u-nu}
 \text{$(u|\nu_\Sigma)_g=0$ on $\Sigma$} \ \ \text{in case}\ \  u\in H^s_{q,\sigma}(\sM;T\sM) \text{ and }s>1/q.
\end{equation}
With these preparations, we can introduce the function spaces used in this article
\begin{align}
\begin{split}\label{Def sigma spaces}
L_{q,\sigma}(\sM;T\sM):&= \PH L_q(\sM;T\sM) \\
H^s_{q,\sigma}(\sM;T\sM): &= H^s_q(\sM;T\sM) \cap L_{q,\sigma}(\sM;T\sM)  \\
B^s_{qp,\sigma}(\sM;T\sM): &= B^s_{qp}(\sM;T\sM) \cap L_{q,\sigma}(\sM;T\sM)   \\
H^{-s}_{q,\sigma}(\sM;T\sM): &=  (H^s_{q',\sigma}(\sM;T\sM))' \\
B^{-s}_{qp,\sigma}(\sM;T\sM): &=  (B^s_{q' p',\sigma}(\sM;T\sM))'
\end{split}
\end{align}
for   $s\geq 0$ and $1<p,q<\infty$,
where the respective duality parings
\begin{equation*}
\begin{aligned}
&\la \cdot | \cdot \ra_\sM:  H^{-s}_{q,\sigma}(\sM;T\sM)\times H^s_{q',\sigma}(\sM;T\sM)) \to \bR, \\
&\la \cdot | \cdot \ra_\sM: B^{-s}_{qp,\sigma}(\sM;T\sM)\times B^s_{q' p',\sigma}(\sM;T\sM)\to \bR,
\end{aligned}
\end{equation*}
are induced by $(\cdot | \cdot )_\sM$.
We would like to point out that our definition of the `negative' spaces $H^{-s}_q$ and $B^{-s}_{qp}$
differs from the usual definition in case $-s< -1/{q'}$. This allows for a more streamlined presentation of our results.
 As the spaces involved will be clear from the context they will not be explicitly
referenced in our notation  $\la \cdot | \cdot \ra_\sM$. Note that
\begin{equation*}
\la u | v \ra_\sM = (u |v)_\sM \quad\text{in case}\quad (u,v)\in L_q(\sM;T\sM)\times L_{q^\prime}(\sM; T\sM).
\end{equation*}

Now we can define the {\em strong surface Stokes operator with Navier boundary conditions}, $A_N: X_1\to X_0$, by
\begin{equation}
\label{Strong-Stokes-Navier}
A_N u:=-2\mu_s \PH\, \dv D(u)= -\mu_s \PH (\Delta_\sM u + \Ric^\sharp u )
\end{equation}
with $X_0:= L_{q,\sigma}(\sM;T\sM)$ and
\begin{equation}
\label{D(AN)}
X_1:=\sD(A_N):=\{u\in H^2_{q,\sigma}(\sM;T\sM):  (u|\nu_\Sigma)_g=0,\ \ \alpha u + \cP_\Sigma \left( (\nabla u + [\nabla  u]^{\sT} )  \nu_\Sigma \right)=0 \text{ on } \Sigma\}.
\end{equation}
Although the condition $(u | \nu_\Sigma)_g=0$ is already contained in the stipulation $u\in H^2_{q,\sigma}(\sM;T\sM)$, see \eqref{sigma-u-nu},
we include it in the definition for extra emphasis.

Next, we will derive a simpler expression of the boundary conditions of \eqref{NS-sys}.
We first note that in local coordinates
\begin{equation}
\label{boundary condition normal}
\nu_\Sigma=\sum_{j=1}^n \frac{1}{\sqrt{g^{nn}}}g^{nj} \frac{\partial}{\partial x^j}=\frac{1}{\sqrt{g^{nn}}}g^{nj} \frac{\partial}{\partial x^j}.
\end{equation}
In addition, we set
 $\displaystyle\cP_\Sigma=I_{T \sM} - \frac{1}{g^{nn}} g^{nj} \frac{\partial}{\partial x^j } \otimes dx^n.$
Hence,
\begin{equation}
\label{P-Sigma-properties}
\cP_\Sigma \frac{\partial}{\partial x^i}=  \frac{\partial}{\partial x^i},\quad i=1, \ldots  ,n-1,
\qquad  \cP_\Sigma \, \left(  g^{nj}\frac{\partial}{\partial x^j} \right)=0.
\end{equation}
Then we have for any $u\in H^1_{q,\sigma}(\sM;T\sM)$,
using the metric property  of $(\cdot | \cdot)_g$,
 \eqref{P-Sigma-properties},
the boundary condition $(u|\nu_\Sigma)_g=0$, and \eqref{ST}
\begin{align}
\cP_\Sigma \left( [\nabla u]^\sT \nu_\Sigma \right)
\notag
& =  \cP_\Sigma \left( g^\sharp(dx^i \otimes \nabla_i u )  g_\flat \nu_\Sigma \right)
= \cP_\Sigma \left(  g^\sharp dx^i ( \nabla_i u |  \nu_\Sigma)_g \right) \\
\notag
& = \sum_{i,j=1}^n\left[ \nabla_i(  u | \nu_\Sigma)_g -(  u | \nabla_i\nu_\Sigma)_g \right]  \, \cP_\Sigma \, g^{ij}\frac{\partial}{\partial x^j} \\
\notag
& = \sum_{i=1}^{n-1}\sum_{j=1}^n\left[ \nabla_i(  u | \nu_\Sigma)_g -(  u | \nabla_i\nu_\Sigma)_g \right]  \, \cP_\Sigma \, g^{ij}\frac{\partial}{\partial x^j} \\
\label{boundary condition 1}
& = \sum_{i=1}^{n-1}\sum_{j=1}^n ( L_\Sigma u \big| \frac{\partial}{\partial x^i})_g \, \cP_\Sigma \, g^{ij}\frac{\partial}{\partial x^j} \\
\notag
& = \sum_{i,j=1}^n ( L_\Sigma u | \frac{\partial}{\partial x^i})_g \, \cP_\Sigma \, g^{ij}\frac{\partial}{\partial x^j}
 = \sum_{i,j=1}^n ( L_\Sigma u | g^{ij}\frac{\partial}{\partial x^i})_g \, \cP_\Sigma \, \frac{\partial}{\partial x^j} \\
\notag
& = \sum_{j=1}^n \la dx^j , L_\Sigma u\ra_{g_\Sigma}  \, \cP_\Sigma \, \frac{\partial}{\partial x^j}
 = \sum_{j=1}^{n-1} \la  dx^j, L_\Sigma u\ra_{g_\Sigma}   \frac{\partial}{\partial x^j}
 =L_\Sigma u ,
\end{align}
where $L_\Sigma$ is the  Weingarten tensor induced by $g|_\Sigma$, with $L_\Sigma u\in \Gamma(\Sigma, T\Sigma)$, and
$$
\la{\cdot \, , \cdot}\ra_{g_\Sigma}:T^*\Sigma \times T \Sigma  \rightarrow \bR^\Sigma \quad\text{is the (fiber-wise defined) duality pairing on $\Sigma$.}
$$
Moreover,
\begin{align}
\label{boundary condition 2}
\begin{split}
\cP_\Sigma \left(  \nabla u  \nu_\Sigma \right)&= \frac{1}{\sqrt{g^{nn}}}g^{nj} \cP_\Sigma \left(  \nabla_j u \right)
= \frac{1}{\sqrt{g^{nn}}}g^{nj} \cP_\Sigma \left( (\partial_j u^i + \Gamma^i_{kj} u^k)\frac{\partial}{\partial x^i} \right) \\
&= \frac{g^{nj}}{\sqrt{g^{nn}}} (\partial_j u^i + \Gamma^i_{kj} u^k)\frac{\partial}{\partial x^i}
 - \frac{g^{nj} g^{ni} }{( g^{nn})^{3/2}}(\partial_j u^n + \Gamma^n_{kj} u^k)\frac{\partial}{\partial x^i}\\
&= \sum\limits_{i=1}^{n-1}\left[ \frac{g^{nj}}{\sqrt{g^{nn}}} (\partial_j u^i + \Gamma^i_{kj} u^k)
 - \frac{g^{nj} g^{ni} }{( g^{nn})^{3/2}}(\partial_j u^n + \Gamma^n_{kj} u^k) \right] \frac{\partial}{\partial x^i}  .
\end{split}
\end{align}
By  \eqref{PH-symmetric}, \eqref{boundary condition 1}, and Lemma~\ref{Appendix Lem: divergence thm}(b)(ii),
for any $u\in \sD(A_N)$ and $v\in H^1_{q',\sigma}(\sM;T\sM)$,
\begin{align}
\begin{split}\label{weak formulation Navier 1}
 (A_N u | v)_{\sM}
& = \mu_s (\nabla u   | \nabla v)_{\sM} -\mu_s (\Ric^\sharp\, u   |  v)_{\sM} - \mu_s (  \nabla u    \nu_\Sigma  | v)_{\Sigma}  \\
& = \mu_s (\nabla u   | \nabla v)_{\sM} -\mu_s (\Ric^\sharp\, u   |  v)_{\sM} -   \mu_s (\cP_\Sigma (\nabla u )  \nu_\Sigma  | v)_{\Sigma} \\
&=  \mu_s (\nabla u   | \nabla v)_{\sM} -\mu_s (\Ric^\sharp\, u   |  v)_{\sM} +  ( \alpha \mu_s u +  \mu_s L_\Sigma u |   v)_{\Sigma},
\end{split}
\end{align}
where   $(\cdot | \cdot)_{\Sigma}$ denotes the duality pairing between $L_q(\Sigma, T\Sigma)$ and $L_{q\prime} (\Sigma; T\Sigma)$.
By setting
\begin{equation}
\label{X12}
X_{1/2}:=H^1_{q,\sigma}(\sM;T\sM) \quad \text{and}\quad X_{-1/2}:=\left( H^1_{q',\sigma}(\sM;T\sM) \right)'=:H^{-1}_{q,\sigma}(\sM; T\sM),
\end{equation}
the above computations motivate  us to define the {\em weak surface Stokes operator with Navier boundary conditions} $\Aw_N: X_{1/2} \to X_{-1/2}$ by
\begin{align*}
 \la \Aw_N u | v\ra_{\sM} =  \mu_s (\nabla u   | \nabla v)_{\sM} -\mu_s (\Ric^\sharp u   |  v)_{\sM} +  ( \alpha \mu_s u +  \mu_s L_\Sigma u |   v)_{\Sigma}
\end{align*}
for all $(u,v)\in X_{1/2} \times (X_{-1/2})'= H^1_{q,\sigma}(\sM;T\sM) \times H^1_{q',\sigma}(\sM;T\sM)$.

\medskip\noindent
The next result states that the surface Stokes operators $\Aw_N$ and $A_N$ both admit a bounded
$H^\infty$-calculus.
\begin{theorem}
\label{thm:Stokes-H-infinty}
There exists a number $\omega_0>0$ such that
\begin{itemize}
\item[{\rm(a)}]
$\omega+ \Aw_N \in H^\infty(X_{-1/2}) \text{ with $H^\infty$-angle }  < \pi/2 \quad \text{for all  }\omega>\omega_0.$
\vspace{1mm}
\item[{\rm(b)}]
$\omega+ A_N \in H^\infty(X_0) \text{ with $H^\infty$-angle } < \pi/2 \quad \text{for all  }\omega>\omega_0.$
\end{itemize}
\end{theorem}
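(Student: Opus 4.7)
The plan is to establish both statements by the standard localization and perturbation scheme, combined with known $H^\infty$-calculus results for model Stokes operators on the full and half spaces. I would start by fixing a finite atlas $\{(U_k,\varphi_k)\}$ for $\sM$, split into interior charts and boundary charts (the latter flattening $\Sigma$ onto $\bR^{n-1}\times\{0\}$), together with a subordinate partition of unity. Pulling the Riemannian metric back into coordinates and freezing it at a reference point in each chart produces a constant-coefficient model problem: on an interior chart one obtains the classical Stokes operator with the usual Helmholtz projection on $\bR^n$, and on a boundary chart one obtains the Stokes operator on $\bR^n_+$ subject to the pure-slip Navier conditions $u^n=0$ and $\partial_n u^j=0$ for $j=1,\dots,n-1$, since the friction $\alpha u$ and the Weingarten term $L_\Sigma u$ are zero order and hence disappear from the frozen principal part.

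For both model problems, bounded $H^\infty$-calculus of angle strictly less than $\pi/2$ is known: on $\bR^n$ this is classical, while on $\bR^n_+$ with pure-slip Navier conditions it follows from Fourier-multiplier / Kalton--Weis arguments of the type used in \cite{PrWi18}, applied both at the $L_q$ level and at the dual $H^{-1}_q$ level. Transferring this property back to $\sM$ proceeds along the usual lines: the difference between the frozen and actual metric, the Christoffel symbols, the Ricci term $\Ric^\sharp u$, the friction $\alpha u$, and the Weingarten term $L_\Sigma u$ are all lower-order or small perturbations relative to the principal part. Invoking the standard perturbation theorems for $H^\infty$-calculus (see Appendix D), after choosing a shift $\omega_0$ large enough, all these perturbations can be absorbed without destroying the bounded calculus or enlarging the angle past $\pi/2$; this establishes (b) for $A_N$ on $X_0$.

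For the weak operator $\Aw_N$ on $X_{-1/2}=H^{-1}_{q,\sigma}(\sM;T\sM)$, I would either repeat the same localization at the level of the sesquilinear form
\begin{equation*}
a(u,v)=\mu_s(\nabla u | \nabla v)_\sM-\mu_s(\Ric^\sharp u | v)_\sM+(\alpha\mu_s u+\mu_s L_\Sigma u | v)_\Sigma,
\end{equation*}
whose localized models admit bounded $H^\infty$-calculus on the negative-order Helmholtz spaces by the same Fourier-multiplier analysis, or else appeal to duality: by construction $\Aw_N$ extends $A_N$ and its adjoint coincides, up to the natural identification $(X_{-1/2})'=X_{1/2}$, with the strong Stokes operator on $L_{q',\sigma}(\sM;T\sM)$ associated with Navier conditions for index $q'$, and bounded $H^\infty$-calculus is preserved under taking adjoints with the same angle. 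Either route yields (a).

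The main obstacle I anticipate is the half-space model for the Navier problem in the weak (negative-order) setting: the divergence constraint together with the non-standard dual space introduced in \eqref{Def sigma spaces} forces a careful reduction to a boundary symbol and an $\cR$-bounded Kalton--Weis argument, and the bookkeeping between the velocity and the induced pressure in that step — so as to stay within $H^{-1}_{q,\sigma}$ rather than $H^{-1}_q$ — is the delicate technical point on which the rest of the argument rests.
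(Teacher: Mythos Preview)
Your outline is a plausible route in principle, but it differs substantially from the paper's proof, and the difference matters. The paper does \emph{not} localize the Stokes operator with Navier conditions directly. Instead it takes a detour through the \emph{perfect slip} boundary condition $\cP_\Sigma((\nabla u-[\nabla u]^{\sf T})\nu_\Sigma)=0$, $(u|\nu_\Sigma)_g=0$. The point is that for this boundary condition the Helmholtz projection commutes with the relevant elliptic operator $L_{ps}=-\Delta_\sM+\Ric^\sharp$ (Proposition~\ref{Prop: Lsp preserve divergence free} and \eqref{perfect slip range cond}--\eqref{perfect slip ker cond}), so the pressure and the divergence constraint can be ignored entirely during the localization step. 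One then localizes only the vector Laplacian with mixed boundary conditions, whose half-space model decouples into $n-1$ Neumann and one Dirichlet Laplacian; $H^\infty$-calculus for these is classical. After restricting to $L_{q,\sigma}$ this yields $H^\infty$-calculus for the perfect-slip Stokes operator $A_{ps}$. The weak operator is obtained from $A_{ps}$ by Amann's interpolation--extrapolation scale, and only \emph{then} is the Navier condition reached, by observing that $\omega+\Aw_N$ differs from the weak perfect-slip operator by the lower-order boundary form $2\mu_s(L_\Sigma u|v)_\Sigma+\alpha\mu_s(u|v)_\Sigma$ and invoking \cite[Corollary~3.3.15]{PruSim16}. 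Finally, the strong Navier operator $A_N$ is recovered as the $L_{q,\sigma}$-realization of $\Aw_N$ via Lemma~\ref{lem: mat}.

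Compared with your plan, the paper's route avoids precisely the step you flagged as the main obstacle: there is no need for a half-space Stokes model with Navier conditions, no $\cR$-boundedness analysis of a Stokes boundary symbol, and no bookkeeping of the pressure through cutoffs. Your appeal to \cite{PrWi18} for the half-space Navier--Stokes model is also somewhat circular, since that paper uses exactly the same perfect-slip detour. Your direct-localization approach could likely be pushed through with enough work (Stokes localization with slip conditions has been done in Euclidean domains), but it is genuinely harder, and the order in which you obtain (a) and (b) is reversed relative to the paper: the paper establishes the weak result first and deduces the strong one as a realization, whereas you propose strong first and weak by duality.
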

\begin{proof}
For a proof we refer to Section \ref{Stokes-H-infity}.
\end{proof}

\section{$H^\infty$-calculus of surface Stokes operator with perfect slip boundary conditions}
\label{Section:NS with perfect slip boundary condition}
In order to prove Theorem~\ref{thm:Stokes-H-infinty}, we take a detour and first consider the Stokes operator with perfect
slip boundary conditions, which is also of interest in its own right.
On a technical level, we are aided by the fact that in this case, the Helmholtz projection commutes with the Laplacian, which (temporarily) allows us to
ignore the pressure and the divergence condition.  The same strategy was also employed in \cite{PrWi18}.

 \medskip\noindent
We start this section by providing the necessary tools to localize differential equations and tangent fields that are defined on the manifold $\sM$.
\medskip
We define an atlas $\{(U_k, \varphi_k)\}_{k\in \cK}$  of  $\sM$  with $\cK=\cK_0\sqcup \cK_1$ such that
$k\in \cK_0$ if $U_k\cap \Sigma=\emptyset$ and $k\in \cK_1$ if $U_k\cap \Sigma \neq \emptyset$. Moreover,
$$
\varphi_k(U_k)=
\bB^n_k(0,R)
:=
\begin{cases}
\bB^n(0,R), \quad & \text{if } k\in \cK_0, \\
\bB^n(0,R) \cap \bR^n_+, \quad & \text{if } k\in \cK_1 .
\end{cases}
$$
Given $k\in\cK$, we set
\begin{align*}
\bX_k =
\begin{cases}
\bR^n, \quad & \text{if } k\in \cK_0, \\
\bR^n_+, \quad & \text{if } k\in \cK_1 ,
\end{cases}
\end{align*}
endowed with the Euclidean metric in $\bR^n$.
Let $\{\xi^2_k\}_{k\in \cK}$ be a partition of unity  subject to $\{ U_k \}_{k\in \cK}$.
Furthermore, let  $\zeta\in C^\infty_0(\bB^n(0,R);[0,1])$ be chosen such that
$$
\zeta  \equiv 1 \quad \text{on }{\rm supp}((\varphi_k)_* \xi_k)  \quad \text{for all } k\in \cK,
$$
where   $(\varphi_k)_* \phi :=\phi\circ \varphi_k^{-1}$  is the pushforward of a function
 $\phi: {\sM} \to \bR $ by $\varphi_k$.
Given $u\in \Gamma(\sM; T\sM)$, we define
\begin{align*}
(\varphi_k)_*u:= ((\varphi_k)_* u^i)_{1\le i\le n}, \quad\text{ where } u=u^i \frac{\partial}{\partial x^i}.
\end{align*}
For $\fF\in \{H_q ,W_q \}$, $1<q<\infty$, and $s\ge 0$, we define:
\begin{equation*}
\begin{aligned}
\Rck &: \fF^s( \sM ;T \sM)\rightarrow \fF^s( \bX_k; \bR^n), \quad  && u\mapsto  (\varphi_k)_*({\xi_k }u) ,\\
\Rek &: \fF^s({\bX_k}; \bR^n)\rightarrow \fF^s({\sM};T\sM), \quad && u_{\kappa}\mapsto{\xi_k }  (\varphi^*_k u_{\kappa}).
\end{aligned}
\end{equation*}
Here and in the following, it is understood that a partially defined and compactly supported vector field is
automatically extended over the whole base manifold by identifying it to be the zero section outside its original domain.
\\
With a slight abuse of notation, we define the pullback of a  vector field $v: \bX_k\to \bR^n$  by  means of
$$
\varphi_k^* v := \left(v^i \circ \varphi_k  \right) \frac{\partial}{\partial x^i}.
$$
Finally, we define
\begin{equation*}
\begin{aligned}
&\Rc: \fF^s( \sM ;T\sM )\rightarrow \boldsymbol{\fF}^s(\bX; \bR^n),\quad u\mapsto (\Rck u)_{k\in \cK}, \\
&\Re:\ \boldsymbol{\fF}^s(\bX; \bR^n)\rightarrow \fF^s({\sM};T\sM),\quad \boldsymbol{v}=(v_k)_{k\in \cK}\mapsto \sum_k\Rek v_k
\end{aligned}
\end{equation*}
with $\boldsymbol{\fF}^s(\bX;\bR^n):=\prod_{k\in \cK}\fF^s({\bX_k};\bR^n)$, equipped with the norm
$$
\| \boldsymbol{v} \|_{\boldsymbol{\fF}^s}=  \sum_{k\in \cK}\|v_k\|_{\fF^s(\bX_k)}  , \quad \boldsymbol{v}=(v_k)_{k\in \cK}.
$$
Then one shows that
\begin{equation*}
\Rc\in \cL(\fF^s( \sM ;T\sM), \boldsymbol{\fF}^s(\bX; \bR^n)),  \quad
\Re \in \cL(\boldsymbol{\fF}^s(\bX; \bR^n), \fF^s( \sM ;T\sM)),
\end{equation*}
see for instance \cite{Ama13}.
Moreover,
\begin{equation*}
(\Re \circ \Rc) u= u,\quad u\in \fF^s( \sM ;T\sM),
\end{equation*}
that is,
$\Re$ is a retraction from $\boldsymbol{\fF}^s(\bX; \bR^n)$ onto ${\fF}^{s}({\sM};T\sM)$, and $\Rc$ is a coretraction.

\subsection{Strong formulation}\label{Section:NS with perfect slip boundary condition strong}
Following the ideas of \cite{PrWi18}, we will first study the Stokes operator with {\em perfect slip boundary conditions}.
To this end,
we consider first the elliptic boundary value problem,
\begin{equation}
\label{perfect slip-elliptic}
\left\{\begin{aligned}
(\lambda -  \Delta_\sM  +\Ric^\sharp ) u   &=f &&\text{on}&&\sM ,\\
 \cP_\Sigma \left( (\nabla u -[\nabla  u]^{\sT} )  \nu_\Sigma \right)   &=h_1 &&\text{on}&&\Sigma  , \\
(u| \nu_\Sigma)_g  &=h_2 &&\text{on}&&\Sigma,
\end{aligned}\right.
\end{equation}
for suitable $\lambda\in \bC$ and
$$(f,h_1,h_2)\in L_{q}(\sM;T\sM)\times W_q^{1-1/q}(\Sigma;T\Sigma)\times W_q^{2-1/q}(\Sigma).$$

\noindent
We should like to briefly explain our rationale for using the terminology \emph{perfect slip boundary conditions}.
In three-dimensional Euclidean space, it can be shown that
$$\cP_\Sigma \left( (\nabla u -[\nabla  u]^{\sT} )  \nu_\Sigma \right) = \nu_\Sigma \times  {\rm curl}\, u,  $$
see for instance \cite[Section 4.1]{PrSiWi18}.
In applications in (magneto)hydrodynamics, the boundary conditions
$$(u | \nu_\Sigma)_g=0,\quad  {\rm curl}\, u \times \nu_\Sigma =0,$$
 are sometimes referred to as \emph{perfect wall conditions,} see for instance \cite{AKST04}.
In addition, these conditions are also known as \emph{Neumann boundary conditions} or  \emph{free boundary conditions,}
see for instance \cite{MiMo09, MiMo09b} and the reference therein.
For lack of a better name and following  \cite{PrSiWi18, PrWi18}, we will use the same terminology also in
the general situation of manifolds of arbitrary dimension.

\medskip
We have the following result about existence and uniqueness of solutions to \eqref{perfect slip-elliptic}.
\begin{proposition}
\label{pro:wellposedPS}
Let $1<q<\infty$ and $\phi\in (0,\pi/2)$. Then, there exists a number $\lambda_0>0$ such that for all $\lambda \in \lambda_0+\Sigma_{\pi-\phi}$ problem \eqref{perfect slip-elliptic} has a unique solution
$u\in H_q^2(\sM;T\sM)$ if and only if
$$(f,h_1,h_2)\in L_{q}(\sM;T\sM)\times W_q^{1-1/q}(\Sigma;T\Sigma)\times W_q^{2-1/q}(\Sigma).$$
Furthermore, there exists a constant $C>0$ such that for all $\lambda \in \lambda_0+\Sigma_{\pi-\phi}$ the estimate
\begin{multline}
\label{resolvent estimate Lps}
|\lambda| \|u\|_{L_q(\sM)} + \|u\|_{H^2_q(\sM)}
\leq   C   \Big( \|f\|_{L_q(\sM)} + \|H_1\|_{H^1_q(\sM)}  + |\lambda|^{1/2}\|H_1\|_{L_q(\sM)}  \\
 +\|{H}_2\|_{H^{2}_q(\sM)} + |\lambda|^{1/2}\|{H}_2\|_{H_q^1(\sM)} + |\lambda|\|{H}_2\|_{L_q(\sM)} \Big)
\end{multline}
holds, where $H_j$ is any extension of ${h}_j$ from $W_q^{j-1/q}(\Sigma)$ to $H_q^j(\sM)$.
\end{proposition}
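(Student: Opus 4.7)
The plan is to solve \eqref{perfect slip-elliptic} by a standard localization and perturbation scheme, reducing to flat model problems on $\bR^n$ and $\bR^n_+$ for which the resolvent estimates are well known.

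First, I would reduce to homogeneous boundary data. Choose extensions $H_j$ of $h_j$ with norms controlled as on the right-hand side of \eqref{resolvent estimate Lps}. Replacing $u$ by $u - H_2\nu_\Sigma - $ (an extension handling $h_1$), the system becomes an equation of the same form with an adjusted right-hand side $\tilde f\in L_q(\sM;T\sM)$ satisfying the desired bounds, and with $h_1=h_2=0$. So it suffices to treat the homogeneous case and prove the bound $|\lambda|\|u\|_{L_q}+\|u\|_{H_q^2}\le C\|f\|_{L_q}$ for $\lambda\in\lambda_0+\Sigma_{\pi-\phi}$.

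Next I would localize using the atlas $\{(U_k,\varphi_k)\}_{k\in\cK}$ and partition of unity $\{\xi_k^2\}$ set up above. Applying $\Rck$ to the equation yields, on each chart domain $\bX_k$, a system of the form
\begin{equation*}
(\lambda - \Delta)u_k + B_k(x,D)u_k = f_k \quad \text{on } \bX_k,
\end{equation*}
with flat-space boundary conditions on $\partial\bR^n_+$ (for $k\in\cK_1$) obtained by freezing the metric at $\varphi_k^{-1}(0)$. The crucial point is that when the metric is frozen to be Euclidean at the base point, the conditions $(u|\nu_\Sigma)_g=0$ and $\cP_\Sigma((\nabla u-[\nabla u]^T)\nu_\Sigma)=0$ reduce to
\begin{equation*}
u^n = 0,\qquad \partial_n u^j - \partial_j u^n = 0 \quad (j=1,\dots,n-1),
\end{equation*}
which after using $u^n=0$ at the boundary simplify to the completely decoupled system $u^n=0$ and $\partial_n u^j=0$ for the tangential components. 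Thus the normal component satisfies a Dirichlet Laplacian resolvent problem and the tangential components satisfy Neumann Laplacian resolvent problems on $\bR^n_+$; both admit the classical estimate $|\lambda|\|u_k\|_{L_q}+\|u_k\|_{H_q^2}\le C\|f_k\|_{L_q}$ for $\lambda\in\lambda_0+\Sigma_{\pi-\phi}$, uniformly in the base point by a standard compactness argument since the metric coefficients are smooth on a compact manifold.

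The main technical obstacle is absorbing the commutator and perturbation terms: differences between the true metric and the frozen metric, the Christoffel symbols appearing in $\Delta_\sM$ and $\Ric^\sharp$, the Weingarten tensor terms in the boundary conditions, and commutators $[\Rck,\Delta_\sM]$ arising from the cutoff $\xi_k$. After choosing the charts small enough, the frozen-coefficient differences contribute operators of the form $B_k=B_k^{(2)}+B_k^{(1)}+B_k^{(0)}$ where $B_k^{(2)}$ has small $\cL(H_q^2,L_q)$-norm and $B_k^{(j)}$ for $j=0,1$ is of lower order. Together with the lower order contributions from $\Ric^\sharp$, they can be treated by interpolation: for any $\varepsilon>0$,
\begin{equation*}
\|B_k^{(j)}u_k\|_{L_q}\le \varepsilon\|u_k\|_{H_q^2}+C_\varepsilon\|u_k\|_{L_q},\quad j=0,1,
\end{equation*}
and the $L_q$ terms on the right are absorbed by the $|\lambda|\|u_k\|_{L_q}$ term from the resolvent estimate once $\lambda_0$ is chosen large enough. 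Summing over $k$ via $\Re$, using that $\Rc$ and $\Re$ are bounded between the global and tuple spaces with $\Re\circ\Rc=\id$, yields a global a priori estimate, and then existence follows from the method of continuity (joining $\lambda=\lambda_0$, where invertibility is obtained from the contraction principle / Neumann series with the flat model as the leading term, to general $\lambda\in\lambda_0+\Sigma_{\pi-\phi}$). Uniqueness is a direct consequence of the a priori estimate.
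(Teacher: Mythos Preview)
Your overall strategy---localize, reduce to the flat half-space model where the boundary conditions decouple into Dirichlet for the normal component and Neumann for the tangential ones, then absorb the variable-coefficient and lower-order terms by smallness of charts and largeness of $|\lambda|$---is exactly the scheme the paper uses, and your identification of the model problems is correct.

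There is, however, a genuine gap in your first step. You propose to reduce to homogeneous boundary data by subtracting an extension $W$ built from $H_1,H_2$, and then only prove the homogeneous estimate $|\lambda|\|v\|_{L_q}+\|v\|_{H^2_q}\le C\|\tilde f\|_{L_q}$. The problem is the $\lambda$-scaling for the Neumann-type datum $h_1$. Any $\lambda$-independent $H^2_q$-lift $V$ of $h_1$ (i.e.\ a field whose normal derivative on $\Sigma$ equals $h_1$) satisfies $\|V\|_{L_q}\le C\|H_1\|_{H^1_q}$, so the term $|\lambda|\|V\|_{L_q}$ coming from $(\lambda-\Delta_\sM+\Ric^\sharp)V$ in $\tilde f$ is of size $|\lambda|\|H_1\|_{H^1_q}$. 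But the right-hand side of \eqref{resolvent estimate Lps} only allows $\|H_1\|_{H^1_q}+|\lambda|^{1/2}\|H_1\|_{L_q}$, which is smaller by a factor $|\lambda|^{1/2}$. So the estimate you obtain after homogenizing is too weak. The fix is either to use $\lambda$-dependent extensions concentrated in a boundary layer of width $|\lambda|^{-1/2}$ (which you do not mention), or---and this is what the paper does---to \emph{not} homogenize at all: carry the inhomogeneous data $(\bar h_{1,k},\bar h_{2,k})$ through the localization and invoke the model half-space estimate directly in its inhomogeneous, $\lambda$-weighted form.

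A secondary difference is that the paper constructs explicit left and right inverses for $L_\lambda$ (via Neumann series applied separately to operators $H_\lambda^\ell$ and $H_\lambda^r$ collecting the lower-order and commutator terms), rather than appealing to the method of continuity. Either route works once the $\lambda$-weighted a~priori estimate is in hand; the paper's approach has the minor advantage of being self-contained and giving the resolvent formula that is reused later in the $H^\infty$-calculus proof.
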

\begin{proof}
In short form,  \eqref{perfect slip-elliptic} can be formulated as
\begin{equation}\label{eq:perfect slip-elliptic-abstract0}
L_\lambda u=F,
\end{equation}
where $L_\lambda: H_q^2(\sM;T\sM)\to L_{q}(\sM;T\sM)\times W_q^{1-1/q}(\Sigma;T\Sigma)\times W_q^{2-1/q}(\Sigma)$ is defined by the left side of \eqref{perfect slip-elliptic} and $F:=(f,h_1,h_2)$.

\medskip\noindent
In the following, we will show that the operator $L_\lambda$ is invertible for $\lambda$ appropriately chosen.
We start by establishing a priori estimates for solutions of \eqref{perfect slip-elliptic}.
Suppose $u\in H^2_q(\sM; T\sM)$ is a solution of \eqref{perfect slip-elliptic}. We then set
\begin{equation*}
\bar{u}_k:=\Rck u =(\bar{u}_k^1, \bar{u}_k^2,\cdots,\bar{u}_k^n)^\sT
\end{equation*}
and
\begin{equation*}
\bar{G}_{(k)}= [\bar{g}^{ij}_{(k)}]^i_j = \zeta  G_k + (1-\zeta) I_n,
\end{equation*}
where $G_k := [\kbk g^{ij}]^i_j$.
Using these notations  and \eqref{Bochner-Ricci-local}, we can write the first line in \eqref{perfect slip-elliptic} in local coordinates as
\begin{equation}
\label{localization of perfect slip-elliptic equation}
(\lambda - \bar{g}^{ij}_{(k)} \partial_i \partial_j ) \bar{u}_k     =\bar{f}_k + P_k  (u)  \quad \text{in } \bX_k ,
\end{equation}
where   the matrices $\bar{G}_{(k)} $ belong to  $BC^\infty(\bX_k; \bR^{n\times n})$  and $\bar{f}_k:= \Rck f$.
Up to translations and rotations, {$\|\bar{G}_{(k)}-I_n\|_{\infty}$  can be made arbitrarily small by shrinking the radius  $R>0$ of $\bB^n_k(0,R)$}.
The linear operator $P_k$ is of first order; in particular
$$
P_k\in \cL(H^1_q(\sM;T\sM), L_q(\bX_k;\bR^n)).
$$
Next, we will localize the boundary conditions in \eqref{perfect slip-elliptic}.
First, in view of  \eqref{boundary condition normal}, the boundary condition $(u|\nu_\Sigma)_g=h_2$   can be restated as
\begin{equation}
\label{boundary condition normal local}
 \frac{1}{\sqrt{\bar{g}_{(k)}^{nn}}}\bar{u}_{k}^n=\bar{h}_{2,k}  \quad \text{on }   \bR^{n-1},\quad k\in \cK_1.
\end{equation}
Using \eqref{boundary condition 1} and \eqref{boundary condition 2}, the remaining boundary condition $\cP_\Sigma \left( (\nabla u - [\nabla  u]^{\sT} )  \nu_\Sigma \right)    =h_1 $ can be rewritten as
\begin{equation}
\label{boundary condition perfect slip local}
  \frac{1}{\sqrt{\bar{g}_{(k)}^{nn}}}\bar{g}_{(k)}^{nj} \partial_j \bar{u}_k^i - \frac{\bar{g}_{(k)}^{nj} \bar{g}_{(k)}^{ni} }{( \bar{g}_{(k)}^{nn})^{3/2}}\partial_j \bar{u}_k^n
  =\bar{h}_{1,k}^{i}+ {\rm tr}_{\bR^{n-1}} Q_k^i (u)   \quad \text{on }    \bR^{n-1},
\end{equation}
for $ k\in \cK_1,\, i=1,2,\ldots,n-1$.  We note here that
$Q^i_k(u)$, in particular, contain an extension of the (localized) term $L_\Sigma u$ in \eqref{boundary condition 1} to
$H^{2}_q(\bX_k )$.
It follows that $Q_k^i\in \cL(H^2_q(\sM;T\sM), H^{2}_q(\bX_k ))$ with
$$\|Q_k^i(u)\|_{H^{s}_q(\bX_k )}\le C(s)\|u\|_{H^s_q(\sM;T\sM)},\quad u\in H^2_q(\sM;T\sM),$$
for any $s\in [0,2]$.
We define
$$L_{\lambda,k}^\#: H_q^2(\bX_k;\bR^n)\to {L_q}(\bX_k;\bR^n)$$
for $k\in\cK_0$ by $L_{\lambda,k}^\# v :=
 \lambda v-\bar{g}^{ij}_{(k)} \partial_i \partial_j v$
and
$$L_{\lambda,k}^\#: H_q^2(\bX_k;\bR^n)\to {L_q}(\bX_k;\bR^n)\times W^{1-1/q}_q(\bR^{n-1};\bR^{n-1}) \times W^{2-1/q}_q(\bR^{n-1}) $$
for $k\in\cK_1$ by
\begin{equation*}
 L_{\lambda,k}^\# v :=
\left(
\begin{array}{l}
 \lambda v-\bar{g}^{ij}_{(k)} \partial_i \partial_j v\\
 {\rm tr}_{\bR^{n-1}}(\frac{1}{\sqrt{\bar{g}_{(k)}^{nn}}}\bar{g}_{(k)}^{nj}  \partial_j v^1 -   \frac{\bar{g}_{(k)}^{nj} \bar{g}_{(k)}^{n1} }{( \bar{g}_{(k)}^{nn})^{3/2}}\partial_j v^n )   \\
 \hspace{2cm} \vdots \\
 {\rm tr}_{\bR^{n-1}}( \frac{1}{\sqrt{\bar{g}_{(k)}^{nn}}}\bar{g}_{(k)}^{nj} \partial_j v^{n-1} -  \frac{\bar{g}_{(k)}^{nj} \bar{g}_{(k)}^{n,n-1} }{( \bar{g}_{(k)}^{nn})^{3/2}}{\partial_j} v^n )   \\
   {\rm tr}_{\bR^{n-1}} \frac{1}{\sqrt{\bar{g}_{(k)}^{nn}}}v^n
 \end{array}
 \right),
\end{equation*}
where ${\rm tr}_{\bR^{n-1}}$ is the trace operator from $\bR^n_+$ to $\bR^{n-1}$.

Let us denote by $L_{\lambda,k}^{\#,0}$ the corresponding operator in the planar case, i.e. $G_k=I_n$. Then it holds that $L_{\lambda,k}^{\#,0} v=\lambda v-\Delta_{\bR^n} v$ if $k\in\cK_0$ and
$$
L_{\lambda,k}^{\#,0} v= \left(\lambda v-\Delta_{\bR^n_+} v,
 {\rm tr}_{\bR^{n-1}}\partial_n v^1 , \cdots ,   {\rm tr}_{\bR^{n-1}}\partial_n v^{n-1}, {\rm tr}_{\bR^{n-1}}  v^n \right)^{\sf T}
$$
if $k\in\cK_1$. It is well-known that for each $\lambda \in \Sigma_{\pi-\phi}$, $\phi\in (0,\pi/2)$, the operators $L_{\lambda,k}^{\#,0}$ are
isomorphisms between the corresponding spaces defined above. It follows from \cite[Theorem 3.1.3]{Lun95} that there exists a constant $C>0$ being independent of $\lambda$, such that
the unique solution $v$ to the elliptic problems
$$L_{\lambda,k}^{\#,0} v=\bar{f}$$
for $k\in \cK_0$, $\bar{f}\in L_q(\bR^n;\bR^n)$ and
$$L_{\lambda,k}^{\#,0} v=(\bar{f},\bar{h}_1^1,\ldots,\bar{h}_1^{n-1},\bar{h}_2)^{\sf T}$$
for $k\in \cK_1$ and
$$(\bar{f},\bar{h}_1,\bar{h}_2)\in L_q(\bR^n_+;\bR^n)\times W^{1-1/q}_q(\bR^{n-1};\bR^{n-1}) \times W^{2-1/q}_q(\bR^{n-1})$$
satisfies
\goodbreak
\begin{multline}
\label{resolvent estimate Lk}
 |\lambda| \|v\|_{L_q(\bX_k)} + \|v\|_{H^2_q(\bX_k)} \le C \Big( \|\bar{f}\|_{L_q(\bX_k)} +\|\bar{H}_1\|_{H^{1}_q(\bX_k)}  + |\lambda|^{1/2}\|\bar{H}_1\|_{L_q(\bX_k)}  \\
 +\|\bar{H}_2\|_{H^{2}_q(\bX_k)} + |\lambda|^{1/2}\|\bar{H}_2\|_{H_q^1(\bX_k)} + |\lambda|\|\bar{H}_2\|_{L_q(\bX_k)}
 \Big)
\end{multline}
for all $\lambda \in \Sigma_{\pi-\phi}$, $\phi\in (0,\pi/2)$ and any extension $\bar{H}_j$ of $\bar{h}_j$ from $W_q^{j-1/q}(\bR^{n-1})$ to $H_q^j(\bX_k)$. In case $k\in \cK_0$, the terms in
\eqref{resolvent estimate Lk} containing $\bar{H}_1$ and $\bar{H}_2$ are omitted.

We will show in the sequel, that \eqref{resolvent estimate Lk} still holds for the general geometry by means of a perturbation argument. To this end, we write $$L_{\lambda,k}^{\#}=L_{\lambda,k}^{\#,0}+L_{\lambda,k}^{\#}-L_{\lambda,k}^{\#,0},$$
wherefore the equation $L_{\lambda,k}^{\#}v=(\bar{f},\bar{h}_1^1,\ldots,\bar{h}_1^{n-1},\bar{h}_2)^{\sf T}$ is equivalent to
$$v+[L_{\lambda,k}^{\#,0}]^{-1}(L_{\lambda,k}^{\#}-L_{\lambda,k}^{\#,0})v=[L_{\lambda,k}^{\#,0}]^{-1}(\bar{f},\bar{h}_1^1,\ldots,\bar{h}_1^{n-1},\bar{h}_2)^{\sf T}.$$
In case $k\in\cK_0$, it holds that
$$L_{\lambda,k}^{\#}v-L_{\lambda,k}^{\#,0}v=\Delta_{\bR^n}v-\bar{g}^{ij}_{(k)} \partial_i \partial_j v,$$
and
\begin{align*}
\|\Delta_{\bR^n}v-\bar{g}^{ij}_{(k)} \partial_i \partial_j v\|_{L_q(\bX_k)}&\le \|\bar{G}_{(k)}-I_n\|_{\infty}\|v\|_{H^2_q(\bX_k)}\\
&\le \|\bar{G}_{(k)}-I_n\|_{\infty}\left(|\lambda| \|v\|_{L_q(\bX_k)} + \|v\|_{H^2_q(\bX_k)}\right),
\end{align*}
where we recall that $\|\bar{G}_{(k)}-I_n\|_{\infty}$  can be made as small as we wish. Therefore we may achieve
$$\|[L_{\lambda,k}^{\#,0}]^{-1}(L_{\lambda,k}^{\#,0}-L_{\lambda,k}^{\#})v\|_{H_{q,\lambda} ^2(\bX_k)}\le\frac{1}{2}\|v\|_{H_{q,\lambda} ^2(\bX_k)},$$
where $H_{q,\lambda} ^2(\bX_k)$ denotes the space $H_q^2(\bX_k) $ equipped with the norm $ |\lambda| \,\|\cdot \|_{L_q(\bX_k)} + \|\cdot\|_{H^2_q(\bX_k)}$.
A Neumann series argument then yields that the linear operator
$$I+[L_{\lambda,k}^{\#,0}]^{-1}(L_{\lambda,k}^{\#}-L_{\lambda,k}^{\#,0}):H_{q,\lambda} ^2(\bX_k)\to H_{q,\lambda} ^2(\bX_k)$$
is invertible and the estimate
$$\|v\|_{H_{q,\lambda} ^2(\bX_k)}\le 2C\|\bar{f}\|_{L_q(\bX_k)}$$
holds, whenever $k\in\cK_0$.

If $k\in\cK_1$, then perturbations on the boundary $\partial\bX_k=\mathbb{R}^{n-1}$ have to be taken into account. We show this exemplarily for the last component of $L_{\lambda,k}^{\#}v-L_{\lambda,k}^{\#,0}v$, given by
$$ {\rm tr}_{\bR^{n-1}} ((\bar{g}_{(k)}^{nn})^{-1/2}v^n-  v^n)\in W_q^{2-1/q}(\bR^{n-1}).$$
The term $((\bar{g}_{(k)}^{nn})^{-1/2}-1)v^n\in H_q^2(\bX_k)$ is an extension and will be estimated with respect to the norm
$$\|\cdot\|_{H^{2}_q(\bX_k)} + |\lambda|^{1/2}\|\cdot\|_{H_q^1(\bX_k)} + |\lambda|\|\cdot\|_{L_q(\bX_k)}.$$
For the sake of readability, we write $a=(\bar{g}_{(k)}^{nn})^{-1/2}- 1$ and we recall that $\|a\|_{L_\infty(\bX_k)}$ can be made as small as we wish, while $\|a\|_{W_\infty^2(\bX_k)}$ is bounded.
Then, it holds that
\begin{align*}
|\lambda|\|av^n\|_{L_q(\bX_k)}&\le |\lambda|\|a\|_{L_\infty(\bX_k)}\|v^n\|_{L_q(\bX_k)}\le |\lambda|\|a\|_{L_\infty(\bX_k)}\|v\|_{L_q(\bX_k)}\le \|a\|_{L_\infty(\bX_k)}\|v\|_{H_{q,\lambda} ^2(\bX_k)}
\end{align*}
by the definition of the norm in $H_{q,\lambda}^2(\bX_k)$.
Furthermore, we have
$$
\|av^n\|_{H_q^1(\bX_k)}
\le \|a\|_{W_\infty^1(\bX_k)}\|v\|_{L_q(\bX_k)}+\|a\|_{L_\infty(\bX_k)}\|v\|_{H_q^1(\bX_k)},
$$
where
$$\|v\|_{L_q(\bX_k)}\le |\lambda|^{-1}\|v\|_{H_{q,\lambda}^2(\bX_k)}.$$
We make use of complex interpolation
$$H_q^{1}(\bX_k)=[L_q(\bX_k),H_q^{2}(\bX_k)]_{1/2}$$
and Young's inequality to obtain
$$\|v\|_{H_q^1(\bX_k)}\le C|\lambda|^{-1/2}\|v\|_{H_{q,\lambda}^{2}(\bX_k)}.$$
This then implies that
$$
|\lambda|^{1/2}\|av^n\|_{H_q^1(\bX_k)}\le  C\left(\|a\|_{W_\infty^1(\bX_k)}|\lambda|^{-1/2}+\|a\|_{L_\infty(\bX_k)}\right)\|v\|_{H_{q,\lambda}^2(\bX_k)}.
$$
Finally, to estimate $av^n$ in $W_q^{2}(\bX_k)$, we observe
\begin{align*}
\|av^n\|_{H_q^2(\bX_k)}\le \|a\|_{L_\infty(\bX_k)}\|v\|_{H_q^2(\bX_k)}+\|a\|_{W_\infty^1(\bX_k)}\|v\|_{H_q^1(\bX_k)}+\|a\|_{W_\infty^2(\bX_k)}\|v\|_{L_q(\bX_k)}.
\end{align*}
By the estimates for $\|v\|_{H_q^s(\bX_k)}$, $s\in\{0,1\}$, from above, we obtain
$$\|av^n\|_{H_q^2(\bX_k)}\le C\left(\|a\|_{L_\infty(\bX_k)}+\|a\|_{W_\infty^1(\bX_k)}|\lambda|^{-1/2}+\|a\|_{W_\infty^2(\bX_k)}|\lambda|^{-1}\right)\|v\|_{H_{q,\lambda}^2(\bX_k)}.$$
This shows that, for any given $\eta>0$, choosing first $\|a\|_{L_\infty(\bX_k)}$ sufficiently small and then $|\lambda|$ sufficiently large, we may achieve that
$$\|av^n\|_{H^{2}_q(\bX_k)} + |\lambda|^{1/2}\|av^n\|_{H_q^1(\bX_k)} + |\lambda|\|av^n\|_{L_q(\bX_k)}\le\eta\|v\|_{H_{q,\lambda}^2(\bX_k)}.$$
The estimates for the remaining boundary conditions can be derived in the same spirit and are therefore omitted. By a Neumann series argument as in case $k\in\cK_0$, it follows that \eqref{resolvent estimate Lk} holds true for the general geometry, with a possibly larger constant $C>0$.

We split the solution $\bar{u}_k$ of \eqref{localization of perfect slip-elliptic equation}, \eqref{boundary condition normal local}, \eqref{boundary condition perfect slip local} into $\bar{u}_k=\tilde{u}_k+\hat{u}_k$ in such a way that $\tilde{u}_k$ solves
\begin{equation*}
 L_{\lambda,k}^\# \tilde{u}_k=(\bar{f}_k,\bar{h}_{1,k},\bar{h}_{2,k})^\sT
\end{equation*}
and $\hat{u}_k$ solves
\begin{equation*}
L_{\lambda,k}^\#  \hat{u}_k=(P_k(u), {\rm tr}_{\bR^{n-1}} Q_k (u), 0)^\sT, \quad Q_k(u)=(Q_k^1(u),\ldots,Q_k^{n-1}(u) )
\end{equation*}
if $k\in \cK_1$. For $k\in \cK_0$, we introduce a similar decomposition $\bar{u}_k=\tilde{u}_k+\hat{u}_k$  with
$$
L_{\lambda,k}^\#\tilde{u}_k=\bar{f}_k ,\quad  L_{\lambda,k}^\#\hat{u}_k=P_k(u).
$$
For the solution $u$ of \eqref{perfect slip-elliptic} we therefore obtain
\begin{align}
\label{resolvent exp Lps}
\begin{split}
u =  u_{(1)}+u_{(2)}  &:=   \Re \left(({\tilde u)}_{k\in\cK}\right) + \Re \left( ({\hat u)}_{k\in\cK}\right)  \\
 & = \sum\limits_{k\in \cK}  \xi_k  \kfk  \tilde{u}_k  +\sum\limits_{k\in \cK}  \xi_k  \kfk  \hat{u}_k.
\end{split}
\end{align}
Employing \eqref{resolvent estimate Lk} yields
\goodbreak
\begin{align*}
  |\lambda| \|u_{(1)}\|_{L_q(\sM)} &+ \|u_{(1)}\|_{H^2_q(\sM)}
 \\
 & \le C \sum_{k\in \cK} \|\bar{f}_k\|_{L_q(\bX_k)} +  C \sum_{k\in \cK_1}\Big( \|\bar{H}_{1,k}\|_{H^{1}_q(\bX_k)}  + |\lambda|^{1/2}\|\bar{H}_{1,k}\|_{L_q(\bX_k)} \\
 &
\quad +\|\bar{H}_{2,k}\|_{H^{2}_q(\bX_k)} + |\lambda|^{1/2}\|\bar{H}_{2,k}\|_{H_q^1(\bX_k)} + |\lambda|\|\bar{H}_{2,k}\|_{L_q(\bX_k)}
 \Big)\\
 &\le C \Big( \|f\|_{L_q(\sM)} + \|H_1\|_{H^{1}_q(\sM)}  + |\lambda|^{1/2}\|H_1\|_{L_q(\sM)}  \\
 &\quad +\|{H}_2\|_{H^{2}_q(\sM)} + |\lambda|^{1/2}\|{H}_2\|_{H_q^1(\sM)} + |\lambda|\|{H}_2\|_{L_q(\sM)} \Big)
\end{align*}
for all $\lambda \in \lambda_0+\Sigma_{\pi-\phi}$, where $\bar{H}_{j,k}\in H_q^j(\bX_k)$ are the localized versions of $H_j\in H_q^j(\sM)$. Here we have used the fact that if $H_j$ is any extension of ${h}_j$ from $W_q^{j-1/q}(\Sigma)$ to $H_q^j(\sM)$, then $\bar{H}_{j,k}$ is an extension of $\bar{h}_{j,k}$ from $W_q^{j-1/q}(\bR^{n-1})$ to $H_q^j(\bX_k)$.
\goodbreak
To estimate $ u_{(2)}$, note that \eqref{resolvent estimate Lk} implies
\begin{align}\label{eq:est_u2}
  |\lambda|& \|u_{(2)}\|_{L_q(\sM)} + \|u_{(2)}\|_{H^2_q(\sM)}\nonumber\\
 &\le C \sum_{k\in \cK} \|P_k(u)\|_{L_q(\bX_k)} +  C \sum_{k\in \cK_1}\left( \|Q_k(u)\|_{H^{1}_q(\bX_k)}  + |\lambda|^{1/2}\|Q_k(u)\|_{L_q(\bX_k)}  \right) \nonumber\\
 &\leq  C \left(  \| u\|_{H^1_q(\sM)}  +  |\lambda|^{1/2} \| u\|_{L_q(\sM)} \right).
\end{align}
By complex interpolation and Young's inequality, there exists a constant $C>0$ such that
\begin{equation}\label{eq:interpolation}
\|u\|_{H^{1}_q(\sM)}\le C\| u\|_{L_q(\sM)}^{1/2}\cdot \| u\|_{H^{2}_q(\sM)}^{1/2}\le |\lambda| ^{-1/2}C\left(|\lambda|\| u\|_{L_q(\sM)}+ \| u\|_{H^{2}_q(\sM)}\right).
\end{equation}
Furthermore,
$$|\lambda|^{1/2}\|u\|_{L_q(\sM)}\le |\lambda| ^{-1/2}\left(|\lambda|\| u\|_{L_q(\sM)}+ \| u\|_{H^{2}_q(\sM)}\right).$$
By possibly further increasing $\lambda_0>0$,  we can always achieve
$$
|\lambda| \|u_{(2)}\|_{L_q(\sM)} + \|u_{(2)}\|_{H^2_q(\sM)} \leq \frac{1}{2}\left(|\lambda| \|u \|_{L_q(\sM)} + \|u \|_{H^2_q(\sM)}\right)
$$
for all $\lambda \in \lambda_0+\Sigma_{\pi-\phi}$.
Combining with the estimate for $u_{(1)}$,
this  yields~\eqref{resolvent estimate Lps}
for all $\lambda \in \lambda_0+\Sigma_{\pi-\phi}$. This estimate implies in particular that the operator $L_\lambda$ defined in \eqref{eq:perfect slip-elliptic-abstract0} has a left inverse $S_\lambda$, provided $\lambda \in \lambda_0+\Sigma_{\pi-\phi}$.

We can even give an explicit formula for the left inverse $S_\lambda$. To this end, we use again \eqref{resolvent exp Lps}, i.e.
$$u= \sum\limits_{k\in \cK}  \xi_k  \kfk  \tilde{u}_k  +\sum\limits_{k\in \cK}  \xi_k  \kfk  \hat{u}_k$$
and define
$$H_\lambda^\ell u:=u_{(2)}=\sum\limits_{k\in \cK}  \xi_k  \kfk  \hat{u}_k.$$
 It follows from the considerations above  that the linear operator
$$
H_\lambda^\ell:H_{q,\lambda} ^2(\sM;T\sM)\to H_{q,\lambda}^2(\sM;T\sM),\quad u \mapsto u_{(2)},
$$  satisfies the norm estimate
$$\|H_\lambda^\ell\|\le\frac{1}{2},$$
provided $\lambda \in \lambda_0+\Sigma_{\pi-\phi}$,
where  $H_{q,\lambda} ^2(\sM;T\sM)$ denotes the space
 $H_q^2(\sM;T\sM) $ equipped with the norm $ |\lambda| \,\|\cdot \|_{L_q(\sM)} + \|\cdot\|_{H^2_q(\sM)}$.
By definition of $\tilde{u}_k$ we then obtain
$$u= \sum\limits_{k\in \cK}  \xi_k  \kfk  (L_{\lambda,k}^\#)^{-1}(\bar{f}_k,\bar{h}_{1,k},\bar{h}_{2,k})+H_\lambda^\ell u,$$
where $(\bar{f}_k,\bar{h}_{1,k},\bar{h}_{2,k})=\bar{f}_k$ if $k\in\cK_0$.
Therefore, it follows that
$$S_\lambda (f,h_1,h_2)=u=(I-H_\lambda^\ell)^{-1}\sum\limits_{k\in \cK}  \xi_k  \kfk  (L_{\lambda,k}^\#)^{-1}(\bar{f}_k,\bar{h}_{1,k},\bar{h}_{2,k}).$$
It remains to prove the existence of a right inverse for the operator $L_\lambda$ defined in  \eqref{eq:perfect slip-elliptic-abstract0}. To this end, let
$$(f,h_1,h_2)\in L_{q}(\sM;T\sM)\times W_q^{1-1/q}(\Sigma;T\Sigma)\times W_q^{2-1/q}(\Sigma)$$
be given and define $u:=S_\lambda (f,h_1,h_2)\in H_q^2(\sM;T\sM)$ with the left inverse $S_\lambda$ from above. In the sequel, we denote by
$$L_{\lambda,k}: H_q^2(\bX_k;\bR^n)\to L_q(\bX_k;\bR^n)$$
for $k\in\cK_0$
and by
$$L_{\lambda,k}: H_q^2(\bX_k;\bR^n)\to {L_q}(\bX_k;\bR^n)\times W^{1-1/q}_q(\bR^{n-1};\bR^{n-1}) \times W^{2-1/q}_q(\bR^{n-1}) $$
for $k\in\cK_1$ the full operator $L_\lambda$ from \eqref{eq:perfect slip-elliptic-abstract0} in local coordinates,
that is, $L_{\lambda,k}$ satisfies the relation
$L_\lambda (\xi_k \varphi^*_k v) = \varphi^*_k L_{\lambda,k}(\psi^*_k \xi_k v)$  for $v\in H^2_q(\bX_k; \bR^n),$
where $\psi_k:=\varphi_k^{-1}$.
It follows that
$$L_k^1:=L_{\lambda,k}-L_{\lambda,k}^\#$$
is of lower order, since the terms of highest order are already included in $L_{\lambda,k}^\#$. Applying $L_{\lambda}$ to $u-H_\lambda^\ell u$ yields
\begin{align*}
L_{\lambda}(u-H_\lambda^\ell u)&=L_{\lambda}\sum\limits_{k\in \cK}  \xi_k  \kfk  (L_{\lambda,k}^\#)^{-1}(\bar{f}_k,\bar{h}_{1,k},\bar{h}_{2,k})\\
&=\sum\limits_{k\in \cK}  \xi_k  \kfk  L_{\lambda,k}(L_{\lambda,k}^\#)^{-1}(\bar{f}_k,\bar{h}_{1,k},\bar{h}_{2,k})\\
&+\sum\limits_{k\in \cK} \kfk [L_{\lambda,k},\psi_k^*\xi_k]   (L_{\lambda,k}^\#)^{-1}(\bar{f}_k,\bar{h}_{1,k},\bar{h}_{2,k})\\
&=\sum\limits_{k\in \cK}  \xi_k  \kfk  (\bar{f}_k,\bar{h}_{1,k},\bar{h}_{2,k})\\
&+\sum\limits_{k\in \cK}  \xi_k  \kfk  L_k^1(L_{\lambda,k}^\#)^{-1}(\bar{f}_k,\bar{h}_{1,k},\bar{h}_{2,k})\\
&+\sum\limits_{k\in \cK} \kfk [L_{\lambda,k},\psi_k^*\xi_k]   (L_{\lambda,k}^\#)^{-1}(\bar{f}_k,\bar{h}_{1,k},\bar{h}_{2,k}).
\end{align*}
We note on the go that
$$\sum\limits_{k\in \cK}  \xi_k  \kfk  (\bar{f}_k,\bar{h}_{1,k},\bar{h}_{2,k})=(f,h_1,h_2)$$
and we define
\begin{equation*}
H_\lambda^r(f,h_1,h_2):=
\sum\limits_{k\in \cK} \Big( \xi_k  \kfk  L_k^1(L_{\lambda,k}^\#)^{-1}+ \kfk [L_{\lambda,k},\psi_k^*\xi_k]   (L_{\lambda,k}^\#)^{-1}\Big)
(\bar{f}_k,\bar{h}_{1,k},\bar{h}_{2,k}).
\end{equation*}
Since both operators $L_k^1$ and $[L_{\lambda,k},\psi_k^*\xi_k]$ are of lower order, it follows that
$$\|H_\lambda^r\|\le \frac{1}{2}\quad $$
for $\lambda \in \lambda_0+\Sigma_{\pi-\phi}$ and by possibly further increasing $\lambda_0>0$ if necessary,
where the space
$$L_{q}(\sM;T\sM)\times W_q^{1-1/q}(\Sigma;T\Sigma)\times W_q^{2-1/q}(\Sigma) $$
is equipped with the norm
on the right hand side of \eqref{resolvent estimate Lps}.
This in turn implies that
$$(S_\lambda-H_\lambda^\ell S_\lambda)(I+H_\lambda^r)^{-1}$$
is a right inverse for $L_\lambda$.  Hence, $L_\lambda$ is invertible.
\end{proof}
In a next step, we consider homogeneous boundary conditions
\begin{equation}\label{perfect slip boundary condition}
\cP_\Sigma \left( (\nabla u - [\nabla  u]^{\sT} )  \nu_\Sigma \right)    =0  \quad \text{and}\quad (u|\nu_\Sigma)_g=0 \quad \text{on } \Sigma
\end{equation}
in \eqref{perfect slip-elliptic} and we define an operator $L_{ps}: \sD(L_{ps})\to L_{q}(\sM;T\sM)$ by
$$
L_{ps} u = -   \Delta_\sM u  +\Ric^\sharp   u, \quad u\in \sD(L_{ps}):=\{u\in H^2_q(\sM;T\sM) : \, u \text{ satisfies } \eqref{perfect slip boundary condition} \}.
$$
Note that by Proposition \ref{pro:wellposedPS}, the operator $(\lambda+L_{ps})$ is invertible for any $\lambda\in \lambda_0+\Sigma_{\pi-\phi}$.

We can then show the following stronger result.
\begin{proposition}
There exists $\omega_0>0$ such that for all $\omega>\omega_0$
\begin{equation}
\label{H-cal of Lsp}
\omega+ L_{ps}\in H^\infty(L_q(\sM;T\sM)) \text{ with $H^\infty$-angle }  < \pi/2  .
\end{equation}
\end{proposition}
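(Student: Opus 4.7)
The plan is to upgrade the resolvent estimate \eqref{resolvent estimate Lps} (with $h_1=h_2=0$) to a bounded $H^\infty$-calculus for $\omega+L_{ps}$ by localization, perturbation, and assembly in the spirit of Denk--Hieber--Pr\"uss and the strategy already used for the resolvent construction above. Since the operator $(\lambda+L_{ps})$ has been shown to be invertible on a sector $\lambda_0+\Sigma_{\pi-\phi}$ for every $\phi\in(0,\pi/2)$, the a priori estimate gives sectoriality of $\omega+L_{ps}$ with spectral angle $<\pi/2$; it remains only to verify the boundedness of the Dunford--Riesz calculus $h\mapsto h(\omega+L_{ps})=\frac{1}{2\pi i}\int_\Gamma h(\lambda)(\lambda-(\omega+L_{ps}))^{-1}\,d\lambda$ for $h\in H^\infty(\Sigma_\theta)$ with $\theta<\pi/2$ suitably chosen.

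First, I would establish bounded $H^\infty$-calculus for the flat model operators $L_{\lambda,k}^{\#,0}$. For $k\in\cK_0$, this is just the (vectorial) shifted Laplacian on $\bR^n$, whose $H^\infty$-calculus on $L_q(\bR^n;\bR^n)$ is classical. For $k\in\cK_1$, the crucial observation is that on the flat half-space $L_{\lambda,k}^{\#,0}$ completely \emph{decouples}: the first $n-1$ components satisfy a Neumann problem for the Laplacian, while the $n$-th component satisfies a Dirichlet problem. Both scalar problems have bounded $H^\infty$-calculus on $L_q(\bR^n_+)$ with angle $<\pi/2$ (e.g.\ by reflection and Mikhlin-type multiplier theorems, cf.~Denk--Hieber--Pr\"uss), so the vectorial operator inherits the same property. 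Passing from constant coefficients to the operator $L_{\lambda,k}^{\#}$ with coefficients $\bar G_{(k)}$ is handled by exactly the same small-perturbation/Neumann-series argument as in the proof of Proposition~\ref{pro:wellposedPS}: $\|\bar G_{(k)}-I_n\|_\infty$ can be made arbitrarily small (and boundary perturbations tamed by taking $|\lambda|$ large), and the Kalton--Weis perturbation theorem for $\cR$-bounded $H^\infty$-calculus then transfers the calculus to $L_{\lambda,k}^{\#}$, with $H^\infty$-angle $<\pi/2$, provided the shift parameter $\omega_0$ is chosen large enough.

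Next, I would assemble the local $H^\infty$-calculi into a global one. Using the formula already extracted above,
\begin{equation*}
(L_\lambda)^{-1}(f,0,0)=(S_\lambda-H_\lambda^\ell S_\lambda)(I+H_\lambda^r)^{-1}(f,0,0),
\end{equation*}
and writing
\begin{equation*}
S_\lambda=(I-H_\lambda^\ell)^{-1}\sum_{k\in\cK}\xi_k\,\kfk\,(L_{\lambda,k}^{\#})^{-1}\kbk,
\end{equation*}
the resolvent of $L_{ps}$ is a finite sum of localized resolvents of the model operators, composed with smooth cut-offs and shifted by correction terms $H_\lambda^\ell$, $H_\lambda^r$ whose norms are at most $1/2$ uniformly in $\lambda\in\omega_0+\Sigma_{\pi-\phi}$. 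Substituting this into the Dunford integral and interchanging the contour integral with the finite sum, each summand yields, modulo lower-order commutators, a localized operator of the form $\xi_k\,\kfk\,h(L_{\omega,k}^{\#})\,\kbk$, which is bounded on $L_q(\sM;T\sM)$ with norm $\lesssim \|h\|_{H^\infty(\Sigma_\theta)}$ by Step~1. The lower-order commutator terms $[L_{\lambda,k},\psi_k^*\xi_k]$ and $L_k^1$ gain a factor $|\lambda|^{-1/2}$ (as in \eqref{eq:est_u2}, \eqref{eq:interpolation}), so after again choosing $\omega_0$ large they contribute operators whose $H^\infty$-norm is dominated by a Neumann series with ratio $<1$.

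The main obstacle I anticipate is Step~1 on the half-space, specifically guaranteeing that after the small perturbation away from the flat metric the vector-valued boundary symbol of $L_{\lambda,k}^{\#}$ still satisfies the Lopatinskii--Shapiro condition uniformly and is $\cR$-sectorial with angle $<\pi/2$; in the flat model this is immediate by decoupling, but for the coupled perturbed symbol one must ensure that the $\cR$-bounded $H^\infty$-calculus is stable under the perturbation. This is precisely the content of the Kalton--Weis perturbation result, and choosing $R$ small (to shrink $\|\bar G_{(k)}-I_n\|_\infty$) together with $\omega_0$ large (to absorb the boundary-coefficient perturbation estimates from the proof of Proposition~\ref{pro:wellposedPS}) provides the required smallness. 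Combining everything yields \eqref{H-cal of Lsp}.
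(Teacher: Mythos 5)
Your Steps 1--2 (flat model on $\bR^n$ and decoupling on $\bR^n_+$ into Neumann and Dirichlet Laplacians, then small-coefficient perturbation to $L_k$) are essentially what the paper does, citing \cite{DuoSim97, DDHP04}, \cite[Thm.\ 7.4]{DenHiePru03}, and the perturbation technique of \cite{GHT13}. Your assembly step, however, conflates two mechanisms and leaves a genuine gap. You expand the resolvent $(L_\lambda)^{-1}=(S_\lambda-H_\lambda^\ell S_\lambda)(I+H_\lambda^r)^{-1}$ as a Neumann series and claim the correction terms ``gain $|\lambda|^{-1/2}$'' so their contributions to the $H^\infty$-norm are ``dominated by a Neumann series with ratio $<1$.'' This does not work as stated: $H_\lambda^r$ and $H_\lambda^\ell$ are only uniformly bounded by $1/2$, so each Neumann-series term carries at best the resolvent decay $|\lambda|^{-1}$, and $\int_\Gamma|h(\lambda)||\lambda+\omega|^{-1}\,ds$ does not converge with a bound depending only on $\|h\|_\infty$ — you would be forced to use the decay of $h\in\cH_0(\Sigma_\phi)$ at infinity, which defeats the purpose of an $H^\infty$-estimate. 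There is no Neumann series at the $H^\infty$-calculus level.

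The paper instead isolates the decomposition $(\lambda+L_{ps})^{-1}f=\Re\bigl(((\lambda+L_k)^{-1}\bar f_k)_{k\in\cK}\bigr)+R(\lambda)f$ (eq.\ \eqref{resolvent exp Lps2}) and proves the quantitative decay $\|R(\lambda)f\|_{L_q}\le C|\lambda|^{-3/2}\|f\|_{L_q}$ (eq.\ \eqref{est for Rl}). The exponent $-3/2$ is the product of two separate ingredients that you must combine: (i) the remainder is obtained by applying the local model resolvent to the lower-order terms $P_k(u)$, ${\rm tr}\,Q_k(u)$, which costs a factor $|\lambda|^{-1}$ in $L_q$; and (ii) the lower-order terms are controlled via the a priori estimate \eqref{resolvent estimate Lps} and the interpolation inequality \eqref{eq:interpolation}, yielding $\|u\|_{H^1_q}+|\lambda|^{1/2}\|u\|_{L_q}\lesssim|\lambda|^{-1/2}\|f\|_{L_q}$. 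Your ``$|\lambda|^{-1/2}$ gain'' captures only ingredient (ii). With the shift $\lambda\mapsto\lambda+\omega$, the total $|\lambda|^{-3/2}$ decay makes $\int_\Gamma|h(\lambda)|\,\|R(\lambda+\omega)f\|\,ds\lesssim\omega^{-1/2}\|h\|_\infty\|f\|_{L_q}$ converge absolutely — no cancellation and no Neumann series is needed for the remainder; the Dunford calculus is invoked only for the main localized part, where the local $H^\infty$-calculi apply directly. To repair your Step 3, replace the Neumann-series framing by this explicit remainder decomposition and derive the $|\lambda|^{-3/2}$ estimate.
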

\begin{proof}
We define  the linear operator $L_k: \sD(L_k)\to {L_q}(\bX_k;\bR^n)$ by
\begin{equation*}
 L_k u := - \bar{g}^{ij}_{(k)} \partial_i \partial_j u    \quad \text{in }\bX_k
\end{equation*}
and for $k\in \cK_1$,
\begin{equation*}
 T_k u :=
 \left(\begin{array}{l}
 {\rm tr}_{\bR^{n-1}}
 (\frac{1}{\sqrt{\bar{g}_{(k)}^{nn}}}\bar{g}_{(k)}^{nj}  \partial_j u^1 -   \frac{\bar{g}_{(k)}^{nj} \bar{g}_{(k)}^{n1} }{( \bar{g}_{(k)}^{nn})^{3/2}}\partial_j u^n  )   \\
 \hspace{2cm} \vdots \\
  {\rm tr}_{\bR^{n-1}}
  ( \frac{1}{\sqrt{\bar{g}_{(k)}^{nn}}}\bar{g}_{(k)}^{nj} \partial_j u^{n-1} -  \frac{\bar{g}_{(k)}^{nj} \bar{g}_{(k)}^{n,n-1} }{( \bar{g}_{(k)}^{nn})^{3/2}}\partial_j u^n ) \\
   {\rm tr}_{\bR^{n-1}} \frac{1}{\sqrt{\bar{g}_{(k)}^{nn}}}u^n
 \end{array}\right),
\end{equation*}
where
$$
\sD(L_k)=
\begin{cases}
H^2_q(\bR^n;\bR^n) \quad &\text{if } k\in \cK_0,\\
\{u\in H^2_q(\bR^n_+;\bR^n): \, T_ku=0 \text{ on }\bR^{n-1} \}  &\text{if } k\in \cK_1.
\end{cases}
$$
We claim that  there exists some $\omega_0>0$ and  $\phi^\infty\in (0,\pi/2)$ such that
\begin{equation}
\label{H-cal of Lk}
\omega+ L_k\in H^\infty(L_q(\bX_k;\bR^n)) \text{ with $H^\infty$-angle }  <\phi^\infty \quad \text{for all }\omega>\omega_0.
\end{equation}
It is well-known that \eqref{H-cal of Lk} holds true for $k\in \cK_0$,
see for instance \cite[Theorem~6.1]{DuoSim97} or \cite[Theorem~4.1]{DDHP04},
as long as $R>0$ is sufficiently small.

In the case of $k\in \cK_1$,  in the planar case, i.e. $G_k=I_n$, one can check that
$$
L_k = -{\rm diag}\,[\Delta_N, \cdots, \Delta_N, \Delta_D] : \sD(L_k)\to L_p(\bX_k;\bR^n),
$$
where  $\Delta_N$ and $\Delta_D$ are the Neumann and Dirichlet Laplacian in $\bR^n_+$, respectively.
Then it follows from well-known results that \eqref{H-cal of Lk} holds, see \cite[Theorem~7.4]{DenHiePru03}. For a general geometry, using a similar perturbation argument to that in \cite{GHT13}, one can show that, by making $R>0$ sufficiently small,  \eqref{H-cal of Lk} is at our disposal.

We seek to find an expression for the resolvent $(\lambda+L_{ps})^{-1}$. To this end, consider the splitting \eqref{resolvent exp Lps} for the solution $u$ of \eqref{perfect slip-elliptic} with homogeneous boundary conditions. This yields
\begin{align}
\label{resolvent exp Lps2}
\begin{split}
(\lambda+L_{ps})^{-1} f=u ={ u_{(1)}+u_{(2)}} &:= { \Re \left(({\tilde u)}_{k\in\cK}\right) + \Re \left( ({\hat u)}_{k\in\cK}\right) }\\
 & = \sum\limits_{k\in \cK}  \xi_k  \kfk  \tilde{u}_k  +\sum\limits_{k\in \cK}  \xi_k  \kfk  \hat{u}_k \\
 & = {  \Re \left(((\lambda+L_k)^{-1}\bar{f}_k)_{k\in\cK}\right) } +  R(\lambda)(f),
\end{split}
\end{align}
where
\begin{equation*}
R(\lambda)(f)  := \sum\limits_{k\in \cK_0 }  \xi_k  \kfk (\lambda+L_k)^{-1}P_k(u)
+ \sum\limits_{k\in \cK_1 } \xi_k  \kfk (L_{\lambda,k}^\#)^{-1}(P_k(u), {\rm tr}_{\bR^{n-1}}Q_k(u), 0)^\sT .
\end{equation*}
The estimates \eqref{eq:est_u2}, \eqref{eq:interpolation} and \eqref{resolvent estimate Lps} then yield the existence of $\omega_0>0$ such that
\begin{equation}
\label{est for Rl}
\| R(\lambda)f \|_{L_q(\sM)} \leq C |\lambda|^{-3/2} \|f\|_{L_q(\sM)}
\end{equation}
for all $\lambda \in \omega+\Sigma_{\pi-\phi^\infty}$, $\omega\ge \omega_0$.

Note that the $H^\infty$-bound $K_{\phi^\infty}$, cf. \eqref{Appendix B: cHi}, can be chosen uniformly for $\omega+L_k$,
where $\omega>\omega_0$ by possibly further increasing $\omega_0$.
Given any $h\in \cH_0(\Sigma_{\pi-\phi^\infty})$,  see Appendix \ref{Appendix D} for a definition of $\cH_0(\Sigma_\phi$),
in view of \eqref{resolvent exp Lps2} we obtain
\begin{align}
\notag
&\|h(\omega+L_{ps})f\|_{L_q(\sM)}\\
\notag
&= \left\|  \frac{1}{2 \pi i} \int_{\Gamma} h(\lambda)(\lambda +\omega+ L_{ps})^{-1}f  \, d\lambda\right\|_{L_q(\sM)}\\
\notag
& \le c \left\| \frac{1}{2\pi i} \sum_{k\in \cK} \xi_k \kfk   \left[\int_{\Gamma} h(\lambda)  (\lambda + \omega +L_k)^{-1}\bar{f}_k  \, d\lambda \right] \right\|_{L_q(\sM)}
\hspace{-5mm} + { c \left\|  \int_{\Gamma} h(\lambda) R(\lambda) f  \, d\lambda  \right\|_{L_q(\sM)} } \\
\notag
& \le   M \sum_{k\in \cK} \left\|  \frac{1}{2 \pi i} \int_{\Gamma} h(\lambda)  (\lambda + \omega +L_k)^{-1}\bar{f}_k  \, d\lambda \right\|_{L_q(\bX_k )}   + M \|h\|_\infty     \int_{\Gamma} \left\|  R(\lambda) f\right\|_{L_q(\sM)}   \, d s     \\
\label{use est Rl}
& \le M \|h\|_\infty \sum_{k\in \cK}  \|\bar{f}_k \|_{L_q(\bX_k )}   + M \|h\|_\infty \|f\|_{L_q(\sM)}  \\
\notag
& \le M  \|h\|_\infty \|f\|_{L_q(\sM)},
\end{align}
where the integral contour $\Gamma$ is defined as in \eqref{Def integral contour} and \eqref{use est Rl}   follows from \eqref{H-cal of Lk} and \eqref{est for Rl}.
\end{proof}

We have shown in Proposition~\ref{pro:wellposedPS} that, for every $\lambda\in \omega+\Sigma_{\pi-\phi^\infty}$ and $f\in L_q(\sM;T\sM)$, the equation $\lambda u-L_{ps} u=f$ has a unique solution $u\in  \sD(L_{ps})$.
More can be said about the solution $u$ if, in addition, $f\in L_{q,\sigma}(\sM;T\sM)$.
\begin{proposition}
\label{Prop: Lsp preserve divergence free}
There exists $\lambda_0>0$ such that for all $\lambda>\lambda_0$ and $f\in L_{q,\sigma}(\sM;T\sM)$, the equation $\lambda u-L_{ps} u=f$ has a unique solution $u\in  H^2_{q,\sigma}(\sM;T\sM)$.
\end{proposition}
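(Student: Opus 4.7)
The plan is to show that $\psi := \dv u \in H^1_q(\sM)$ vanishes identically. Combined with $u \in H^2_q(\sM; T\sM)$ and the boundary condition $(u|\nu_\Sigma)_g = 0$ contained in $\sD(L_{ps})$, this will place $u$ in $H^2_{q,\sigma}(\sM; T\sM)$. My strategy is to derive a homogeneous scalar Neumann problem for $\psi$ and then invoke uniqueness of its resolvent. The crucial geometric input is the Weitzenb\"ock identity, which identifies $L_{ps} u = -\Delta_\sM u + \Ric^\sharp u$ with the Hodge Laplacian $\Delta_H = d\delta + \delta d$ applied to the $1$-form $u^\flat$. Since the scalar codifferential satisfies $\delta u^\flat = -\psi$, I would first derive the decomposition
\[
L_{ps} u \;=\; -\gd\psi \;+\; (\delta d u^\flat)^\sharp.
\]

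I would then test this identity against $\gd\phi$ for arbitrary $\phi \in H^1_{q'}(\sM)$ and apply the Stokes-type formula $(\delta \omega\,|\,\eta)_\sM = (\omega\,|\,d\eta)_\sM - \int_\Sigma(\iota_{\nu_\Sigma}\omega\,|\,\eta)_g\,d\sigma$ with $\omega = du^\flat$ and $\eta = d\phi$. Using $d^2\phi = 0$, this produces
\[
(L_{ps} u + \gd\psi \,|\, \gd\phi)_\sM \;=\; -\int_\Sigma (\iota_{\nu_\Sigma} du^\flat \,|\, d\phi)_g\,d\sigma.
\]
The next step is to show that $\iota_{\nu_\Sigma} du^\flat$ vanishes on $\Sigma$ under the perfect-slip boundary conditions; a local computation (cleanest in boundary-adapted coordinates) identifies $\iota_{\nu_\Sigma} du^\flat$ via the musical isomorphism with $(\nabla u - [\nabla u]^\sT)\nu_\Sigma$. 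Its tangential part is the perfect-slip datum $\cP_\Sigma((\nabla u - [\nabla u]^\sT)\nu_\Sigma)$, which vanishes because $u \in \sD(L_{ps})$, while its normal component $((\nabla u - [\nabla u]^\sT)\nu_\Sigma \,|\, \nu_\Sigma)_g$ vanishes automatically by antisymmetry of $\nabla u - [\nabla u]^\sT$. This would collapse the boundary integral and yield
\[
(L_{ps}u\,|\,\gd\phi)_\sM \;=\; -(\gd\psi\,|\,\gd\phi)_\sM, \qquad \phi \in H^1_{q'}(\sM).
\]

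Finally, testing the resolvent equation itself against $\gd\phi$ and using $(u|\nu_\Sigma)_g = 0$ (which gives $(u\,|\,\gd\phi)_\sM = -(\psi\,|\,\phi)_\sM$) together with $f \in L_{q,\sigma}$ (which gives $(f\,|\,\gd\phi)_\sM = 0$), and combining with the preceding identity, produces
\[
\lambda(\psi\,|\,\phi)_\sM + (\gd\psi\,|\,\gd\phi)_\sM \;=\; 0 \qquad \text{for all } \phi \in H^1_{q'}(\sM),
\]
which is the weak form of the scalar Neumann problem $(\lambda - \Delta)\psi = 0$ in $\sM$ with $\partial_{\nu_\Sigma}\psi = 0$ on $\Sigma$, where $\Delta$ denotes the Laplace--Beltrami operator on functions. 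Since the Neumann realization of $-\Delta$ on $L_q(\sM)$ is sectorial with non-negative real spectrum, after possibly enlarging $\lambda_0$ so that $-\lambda$ lies in its resolvent set, the only solution is $\psi \equiv 0$, and therefore $u \in H^2_{q,\sigma}(\sM; T\sM)$. The main obstacle I anticipate is justifying the boundary vanishing $\iota_{\nu_\Sigma} du^\flat|_\Sigma = 0$, which requires carefully matching the intrinsic $2$-form $du^\flat$ with the extrinsic expression $(\nabla u - [\nabla u]^\sT)\nu_\Sigma$ appearing in the boundary condition; the remainder is the manifold analogue of the classical ``take divergence, obtain a scalar Neumann problem'' reduction familiar from the Euclidean slip theory.
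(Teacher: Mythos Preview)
Your argument is correct and takes a genuinely different route from the paper's. The paper introduces an auxiliary potential $\phi \in H^3_q(\sM)$ solving the Neumann problem $\Delta_B\phi = \lambda\,\dv u$, $(\gd\phi\,|\,\nu_\Sigma)_g = 0$, and then computes $\|\gd\phi\|^2_{L_2(\sM)}$ via two successive integrations by parts, using the commutator formula $\Delta_\sM\gd\phi = \gd\Delta_B\phi + \Ric^\sharp\gd\phi$ and the boundary identity $\cP_\Sigma\nabla_{\nu_\Sigma}\gd\phi = L_\Sigma\gd\phi$; after cancellation against the perfect-slip condition the result is $\|\gd\phi\|^2_{L_2} + \lambda\|\dv u\|^2_{L_2} = 0$, forcing $\dv u = 0$. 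Your approach dispenses with the auxiliary potential and the $L_2$-energy: the Weitzenb\"ock identity gives $L_{ps}u = -\gd\psi + (\delta du^\flat)^\sharp$ directly, and $d^2 = 0$ collapses $(\delta du^\flat \,|\, d\phi)_\sM$ to a pure boundary term that vanishes because $(\iota_{\nu_\Sigma}du^\flat)^\sharp = (\nabla u - [\nabla u]^\sT)\nu_\Sigma$ is killed by the perfect-slip condition (tangential part) and by skew-symmetry (normal part). The outcome is a weak homogeneous Neumann equation for $\psi$ itself, and you conclude by resolvent injectivity.

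Both arguments ultimately rest on the same geometric ingredients, but yours stays entirely within the $L_q$--$L_{q'}$ duality, while the paper passes through an $L_2$-energy computation. Your identification of the obstacle is accurate and the verification is short: from $(du^\flat)_{ij} = \nabla_i u_j - \nabla_j u_i$ one gets $((\iota_\nu du^\flat)^\sharp)^k = \nu^i u^k_{|i} - g^{kj}\nu_m u^m_{|j}$, which is exactly $((\nabla u - [\nabla u]^\sT)\nu)^k$. One small repair: your Stokes formula pairs $\iota_\nu du^\flat$ with the trace of $d\phi$, which is not defined for general $\phi \in H^1_{q'}$; prove the identity $(L_{ps}u\,|\,\gd\phi)_\sM = -(\gd\psi\,|\,\gd\phi)_\sM$ first for smooth $\phi$ and then extend by density, noting that both sides are continuous on $H^1_{q'}(\sM)$.
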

\begin{proof}
We consider the Neumann problem
\begin{equation}
\label{Neumann pb}
\left\{\begin{aligned}
\Delta_B \phi &=  \lambda\,\dv u   &&\text{on}&&\sM ,\\
(\gd\phi | \nu_\Sigma)_g    &=0 &&\text{on}&&\Sigma,
\end{aligned}\right.
\end{equation}
where $\Delta_B$ is the Laplace-Beltrami operator on $(\sM,g)$. By Lemma~\ref{Apendix Lemma Helmholtz},
\eqref{Neumann pb} has a unique (up to a constant) solution $\phi\in H^3_q(\sM)$.
Employing Lemma~\ref{Appendix Lem: divergence thm}  repeatedly, we obtain
\begin{align}
\notag
& \|\gd\phi \|_{L_2(\sM)}^2   \\
\notag
&=  (-\Delta_B \phi | \phi )_{\sM} = -\lambda ( \dv u | \phi )_{\sM} = \lambda (  u |  \gd \phi )_{\sM}\\
\notag
&=  (\Delta_\sM u | \gd \phi)_{\sM} + (f |\gd \phi)_{\sM} - (\Ric^\sharp u | \gd \phi)_{\sM} \\
\label{Neumann pb eq1}
&= -(\nabla u | \nabla \gd \phi)_{\sM} + ( \nabla_{\nu_\Sigma} u | \gd \phi)_{\Sigma}- (\Ric^\sharp u | \gd \phi)_{\sM} \\
\notag
&= (u | \Delta_\sM \gd \phi)_{\sM}  + ( \nabla_{\nu_\Sigma} u | \gd \phi)_{\Sigma}   - (u | \nabla_{\nu_\Sigma} \gd \phi)_{\Sigma}- (\Ric^\sharp u | \gd \phi)_{\sM} \\
\label{Neumann pb eq2}
&=  (u| \gd \Delta_B \phi)_{\sM}  + ( \nabla_{\nu_\Sigma} u | \gd \phi)_{\Sigma}   - (u | \nabla_{\nu_\Sigma} \gd \phi)_{\Sigma} \\
\label{Neumann pb eq3}
&=  -\lambda  \|\dv u\|_{L_2(\sM)}^2   + ( \nabla_{\nu_\Sigma} u | \gd \phi)_{\Sigma}   - (u | \nabla_{\nu_\Sigma} \gd \phi)_{\Sigma}.
\end{align}
By the definition of $\PH$ and the fact that $f\in L_{q,\sigma}(\sM;T\sM)$, we conclude that
$$
(f | \gd \phi)_{\sM}=(\PH f | \gd \phi)_{\sM}=(f-\gd\psi_f | \gd \phi)_{\sM}=0.
$$
Therefore,  \eqref{Neumann pb eq1} follows.
In  \eqref{Neumann pb eq2},
we have used the property
\begin{equation}
\label{Def Ricci}
\Delta_\sM \gd \phi=\gd \Delta_B \phi +\Ric^\sharp \gd \phi,
\end{equation}
see Lemma~\ref{lem: commuator-Ricci}.

Since $\phi$ is a scalar function, we have $\nabla \gd \phi=(\nabla \gd \phi)^\sT$.
Employing \eqref{boundary condition 1}, with $u$ replaced by $\gd \phi$, we then obtain
\begin{align*}
& \cP_\Sigma \nabla_{\nu_\Sigma} \gd \phi =   \cP_\Sigma (\nabla \gd \phi  )\nu_\Sigma  = \cP_\Sigma (\nabla \gd \phi  )^\sT\nu_\Sigma
= L_\Sigma \gd \phi.
\end{align*}
Hence, the last   term on the RHS of \eqref{Neumann pb eq3} can be rewritten as
$$
(u | \nabla_{\nu_\Sigma} \gd \phi)_{\Sigma} =(u | \cP_\Sigma \nabla_{\nu_\Sigma} \gd \phi)_{\Sigma} = (L_\Sigma u | \gd \phi )_{\Sigma}
$$
in view of \eqref{perfect slip boundary condition}. We thus infer that
\begin{align*}
\| \gd\phi\|_{L_2(\sM)}^2 + \lambda \|\dv u\|_{L_2(\sM)}^2  &= ( \nabla_{\nu_\Sigma} u  -L_\Sigma u | \gd \phi)_{\Sigma} \\
&= ((\nabla u - [\nabla u]^{\sf T})\nu_\Sigma | \gd \phi)_{\Sigma} =0,
\end{align*}
where we have used \eqref{perfect slip boundary condition} once more.
We thus have $\dv u =0 $ and  $(u | \nu_\Sigma)_g=0$ and this, in turn, implies
 $u\in H^2_{q,\sigma}(\sM;T\sM)$,
in virtue of Lemma~\ref{Apendix Lemma Helmholtz}.
\end{proof}

Proposition~\ref{Prop: Lsp preserve divergence free} reveals that for $\lambda>\lambda_0$
\begin{equation}
\label{perfect slip range cond}
(\lambda + L_{ps})^{-1} \sR(\PH) \subseteq \sR(\PH) .
\end{equation}
We will further show that
\begin{equation}
\label{perfect slip ker cond}
(\lambda + L_{ps})^{-1} \sN(\PH) \subseteq \sN(\PH) .
\end{equation}
Indeed, if $f=\gd g$ for some $g\in H^1_q(\sM)$, then we consider
\begin{equation}
\label{Ker pb}
\left\{\begin{aligned}
(\lambda-\Delta_B) \phi &= g   &&\text{on}&&\sM ,\\
(\gd \phi | \nu_\Sigma)_g   &=0 &&\text{on}&&\Sigma.
\end{aligned}\right.
\end{equation}
For sufficiently large $\lambda_0>0$ and all $\lambda>\lambda_0$, \eqref{Ker pb} has a unique solution $\phi\in H^3_q(\sM)$
by means of a localization argument as in Section~\ref{Section:NS with perfect slip boundary condition strong}.
Let $v=\gd \phi$. Then $v$ satisfies
$$
\lambda v - \gd \Delta_B \phi = \gd g =f .
$$
Using \eqref{Def Ricci} once more, it is easy task to check that $v$ solves
\begin{equation*}
\left\{\begin{aligned}
(\lambda -\Delta_\sM +\Ric^\sharp)  v &= f   &&\text{on}&&\sM ,\\
(v | \nu_\Sigma)_g   &=0 &&\text{on}&&\Sigma.
\end{aligned}\right.
\end{equation*}
On the other hand, since $\nabla v = [\nabla v]^\sT$, the boundary condition
$$
\cP_\Sigma \left( (\nabla v - [\nabla  v]^{\sT} )  \nu_\Sigma \right)    =0   \quad \text{on } \Sigma
$$
is automatically satisfied. Hence $v$ is indeed a solution of \eqref{perfect slip-elliptic}.
Uniqueness of  solutions of \eqref{perfect slip-elliptic} implies that $v=u$ and thus \eqref{perfect slip ker cond} is proved.

By \eqref{perfect slip range cond} and \eqref{perfect slip ker cond}, given any $u\in \sD(L_{ps})$, one has
\begin{equation}
\label{exchange PH and Lps}
L_{ps} \PH u \in \sR(\PH) \quad  \text{and} \quad L_{ps} (I-\PH) u \in \sN(\PH).
\end{equation}
Now we are ready to study  the {\em strong surface Stokes operator with perfect slip boundary conditions} $A_{ps}: \sD(A_{ps})\to L_{q,\sigma}(\sM;T\sM)$, defined by
$$
A_{ps}u=  -  \mu_s  \PH (\Delta_\sM u + \Ric^\sharp u )
$$
with
$$
\sD(A_{ps})=\{u\in H^2_{q,\sigma}(\sM;T\sM):   (u| \nu_\Sigma)_g=0,\ \  \cP_\Sigma \left( (\nabla u - [\nabla  u]^{\sT} )  \nu_\Sigma \right)=0 \text{ on } \Sigma\}.
$$
In spite of \eqref{sigma-u-nu}, we include the condition  $(u| \nu_\Sigma)_g=0$ for extra emphasis.

Let $\widetilde{A}_{ps}:= A_{ps} +   2\mu_s   \PH \Ric^\sharp: \sD(A_{ps})\to L_{q,\sigma}(\sM;T\sM)$. Since for any $u\in \sD(L_{ps})$, one can deduce
\begin{align*}
 \PH L_{ps} u = \PH L_{ps} \PH u + \PH L_{ps} (I-\PH) u = L_{ps} \PH u
\end{align*}
from \eqref{exchange PH and Lps},
it holds that
$$
\widetilde{A}_{ps}=  \mu_s  L_{ps} |_{\sD(A_{ps})}.
$$
Therefore, for sufficiently large $\omega>0$, $\omega+ \widetilde{A}_{ps} \in H^\infty(L_{q,\sigma}(\sM;T\sM))$, by \eqref{H-cal of Lsp}.
By possibly enlarging $\omega_0>0$, the following theorem is an immediate consequence of \cite[Corollary 3.3.15]{PruSim16}.
\begin{theorem}
There exists $\omega_0>0$ such that for all $\omega>\omega_0$
\begin{equation}
\label{cHi Aps}
\omega+ A_{ps} \in H^\infty(L_{q,\sigma}(\sM;T\sM)) \text{ with $H^\infty$-angle }  < \pi/2 .
\end{equation}
\end{theorem}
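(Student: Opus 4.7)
The plan is to reduce the $H^\infty$-calculus statement for $A_{ps}$ to the one already established for $L_{ps}$, modulo a bounded perturbation. First I would record the algebraic identity
\[
A_{ps} \;=\; \widetilde{A}_{ps} \;-\; 2\mu_s\,\PH\,\Ric^\sharp \;=\; \mu_s\,L_{ps}\big|_{\sD(A_{ps})} \;-\; 2\mu_s\,\PH\,\Ric^\sharp ,
\]
together with the commutation relation $\PH L_{ps} = L_{ps}\PH$ on $\sD(L_{ps})$, which follows from \eqref{perfect slip range cond}, \eqref{perfect slip ker cond}, and \eqref{exchange PH and Lps}. This identity is what makes the whole strategy work: the principal part of $A_{ps}$ is a restriction of a known $H^\infty$-operator to an invariant closed subspace.

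The key steps, in order, would be the following. Step~1: verify that $\widetilde{A}_{ps}=\mu_s L_{ps}|_{L_{q,\sigma}(\sM;T\sM)}$ inherits the bounded $H^\infty$-calculus from $L_{ps}$. Since \eqref{perfect slip range cond}--\eqref{perfect slip ker cond} show that $L_{q,\sigma}(\sM;T\sM)=\sR(\PH)$ is invariant under the resolvent $(\lambda+L_{ps})^{-1}$ for $\lambda$ in the relevant sector, the restricted resolvent coincides with $(\lambda+\mu_s^{-1}\widetilde{A}_{ps})^{-1}$. Consequently, for any $h\in \cH_0(\Sigma_{\pi-\phi})$, the Dunford integral defining $h(\omega+\widetilde{A}_{ps})$ on $L_{q,\sigma}$ is simply the restriction of $h(\omega+\mu_s L_{ps})$ acting on the larger space, and the $H^\infty$-bound transfers directly from \eqref{H-cal of Lsp}. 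Step~2: observe that $2\mu_s\,\PH\,\Ric^\sharp\in\cL(L_{q,\sigma}(\sM;T\sM))$, since $\Ric^\sharp$ is a smooth $(1,1)$-tensor on the compact manifold $\sM$ (hence a bounded multiplication operator on $L_q$) and $\PH$ is a bounded projection. Step~3: apply the standard perturbation theorem for the $H^\infty$-calculus (this is precisely \cite[Corollary~3.3.15]{PruSim16}): if $B\in H^\infty(X)$ with angle $<\pi/2$ and $P\in\cL(X)$, then $B+P$, shifted by a sufficiently large constant, again belongs to $H^\infty(X)$ with angle $<\pi/2$. The shift is needed because the $H^\infty$-calculus requires sectoriality, and the relative bound of $P$ with respect to $B$ can be made arbitrarily small by pushing the spectral parameter far enough to the right (using the resolvent estimate $\|(\lambda+B)^{-1}\|\le C/|\lambda|$ on the shifted sector).

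I expect the only subtlety to be Step~1, namely the rigorous transfer of the $H^\infty$-calculus from the full-space operator $L_{ps}$ to its invariant-subspace restriction $\widetilde{A}_{ps}$. One must check not only that $L_{q,\sigma}$ is $(\lambda+L_{ps})^{-1}$-invariant — which is already done — but also that the restriction is sectorial on $L_{q,\sigma}$ with the same sectoriality constants, and that the Dunford integral commutes with the inclusion $L_{q,\sigma}\hookrightarrow L_q$. Both follow from invariance plus the uniform resolvent bounds implied by \eqref{H-cal of Lsp}, so no new estimates are required beyond what has been established. Steps~2 and~3 are then routine, and, after possibly enlarging $\omega_0$ to absorb the bounded perturbation $2\mu_s\PH\Ric^\sharp$ into the shift, we obtain \eqref{cHi Aps}.
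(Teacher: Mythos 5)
Your proposal is correct and follows essentially the same route as the paper: define $\widetilde{A}_{ps}=A_{ps}+2\mu_s\PH\Ric^\sharp$, use the commutation $\PH L_{ps}=L_{ps}\PH$ (obtained from \eqref{perfect slip range cond}, \eqref{perfect slip ker cond}, \eqref{exchange PH and Lps}) to identify $\widetilde{A}_{ps}=\mu_s L_{ps}|_{\sD(A_{ps})}$, transfer the $H^\infty$-calculus from $L_{ps}$ to this invariant-subspace restriction, and absorb the lower-order term $2\mu_s\PH\Ric^\sharp$ via \cite[Corollary~3.3.15]{PruSim16}. Your extra care in Step~1 about sectoriality of the restriction and compatibility of the Dunford integral with the inclusion is a welcome amplification of a point the paper states tersely, but it is not a different argument.
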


\subsection{Weak formulation}\label{Section:NS with perfect slip boundary condition weak}
For notational brevity, let $A_0=\omega+A_{ps}$, $\omega>\omega_0$ with $\omega_0$ being defined in \eqref{cHi Aps}.
Note that $\omega+A_{ps}$ is invertible.
We set
$$
Z_0=X_0  =L_{q,\sigma}(\sM;T\sM) \quad \text{and} \quad Z_1:=\sD(A_{ps}).
$$
By \cite[Theorems V.1.5.1 and V.1.5.4]{Ama95}, the pair $(Z_0, A_0)$ generates an interpolation-extrapolation scale $(Z_\beta, A_\beta)$, $\beta\in \bR$, with respect to the complex interpolation functor.
In particular, when $\beta\in (0,1)$, $A_\beta$ is the $Z_\beta$-realization of $A_0$, where
$$
Z_\beta= \sD(A_0^\beta)=[Z_0,Z_1]_\beta
$$
due to \eqref{cHi Aps}.
Let $Z_0^\sharp:=(Z_0)'=L_{q',\sigma}(\sM;T\sM)$ and
\begin{equation*}
\begin{aligned}
A_0^\sharp: &=(A_0)'= \omega- \mu_s\PH (\Delta_\sM - \Ric^\sharp) : \sD(A_0^\sharp) \to Z_0^\sharp, \\
\sD(A_0^\sharp) &=Z_1^\sharp:=\{u\in H^2_{q',\sigma}(\sM;T\sM) : \, \cP_\Sigma \left( (\nabla u - [\nabla  u]^{\sT} )  \nu_\Sigma \right)    =0   \text{ on } \Sigma \}.
\end{aligned}
\end{equation*}
Then $(Z_0^\sharp, A_0^\sharp)$ generates an interpolation-extrapolation scale $(Z_\beta^\sharp, A_\beta^\sharp)$, $\beta\in \bR$, the dual scale.
By \cite[Theorem  V.1.5.12]{Ama95}, it holds that
\begin{equation}
\label{dual inter-extra}
(Z_\beta)'=Z_{-\beta}^\sharp \quad \text{and}\quad (A_\beta)'=A_{-\beta}^\sharp
\end{equation}
for $\beta\in \bR$. Particularly, when $\beta=-1/2$, the operator $A_{-1/2}:Z_{1/2}\to Z_{-1/2}$ satisfies
$$
\sD(A_{-1/2})=Z_{1/2}=[Z_0,Z_1]_{ 1/2}=H^1_{q,\sigma}(\sM;T\sM),
$$
see  Proposition~\ref{Appendix Prop interpolation 4},
and $Z_{-1/2}=(Z_{1/2}^\sharp)'$.
Note that
$$
Z_{1/2}^\sharp= [Z_0^\sharp,Z_1^\sharp]_{1/2}=H^1_{q',\sigma}(\sM;T\sM).
$$
Therefore,
$$
Z_{1/2}=X_{1/2} \quad \text{and} \quad Z_{-1/2}=X_{-1/2},
$$
where $X_{1/2}$ and $X_{-1/2}$ were introduced in  \eqref{X12}.
By the definitions in \eqref{Def sigma spaces}, one can follow the arguments in \cite[Propositions~2.3 and 2.4]{PrWi18} and show that for any $\theta\in (0,1)$
\begin{equation}
\label{interpolation of X}
[Z_{-1/2}, Z_{ 1/2}]_\theta = H^{2\theta-1}_{q,\sigma}(\sM;T\sM) \quad (Z_{-1/2}, Z_{ 1/2})_{\theta,p} = B^{2\theta-1}_{q p ,\sigma}(\sM;T\sM),
\end{equation}
see also Proposition~\ref{Appendix Prop interpolation 3}.
By replacing $q$ by $q'$, we infer from  Section~\ref{Section:NS with perfect slip boundary condition strong} that
$A_0^\sharp \in H^\infty(Z_0^\sharp)$ with $H^\infty$-angle $\phi^\infty_{A_0^\sharp}<\pi/2$.
Since $A_{1/2}^\sharp$ is the $Z_{1/2}^\sharp$-realization of $A_0^\sharp$, it follows from \cite[Proposition~3.3.14]{PruSim16} and \eqref{dual inter-extra} that
$$\text{ $A_{-1/2}=(A_{1/2}^\sharp)'\in H^\infty( Z_{-1/2})$ with $H^\infty$-angle $< \pi/2$. }$$
We call the operator $A_{-1/2}: Z_{1/2}\to Z_{-1/2}$ the {\em weak surface Stokes operator with perfect slip boundary conditions}.
\goodbreak

Since $A_{-1/2}$ is the closure of $A_0$ in $Z_{-1/2}$, it follows that $A_{-1/2} u=A_0 u$ for all $u\in \sD(A_0)=Z_1$.
Thus for any $v\in Z_{1/2}^\sharp$, it follows from \eqref{boundary condition 1},
\eqref{perfect slip boundary condition} and Lemma~\ref{Appendix Lem: divergence thm}(b)(ii)  that
\begin{align*}
&
 \la A_{-1/2} u | v\ra_{\sM}
=  (A_0 u | v)_{\sM}\\
&= \omega (u | v)_{\sM} +  \mu_s (\nabla u   | \nabla v)_{\sM} -\mu_s (\Ric^\sharp u   |  v)_{\sM} - \mu_s (  \nabla u    \nu_\Sigma  | v)_{\Sigma}  \\
&= \omega (u | v)_{\sM} +  \mu_s (\nabla u   | \nabla v)_{\sM} -\mu_s (\Ric^\sharp u   |  v)_{\sM} - \mu_s ( \cP_\Sigma ([\nabla u ]^\sT   \nu_\Sigma)  | v)_{\Sigma} \\
&= \omega (u | v)_{\sM} +\mu_s (\nabla u   | \nabla v)_{\sM} -\mu_s (\Ric^\sharp u   |  v)_{\sM} - \mu_s (     L_\Sigma u |   v)_{\Sigma}.
\end{align*}
By the density of $Z_1$ in $Z_{1/2}$, we infer that  for all
$$(u,v)\in Z_{1/2} \times Z_{1/2}^\sharp=H^1_{q,\sigma}(\sM;T\sM)\times   H^1_{q',\sigma}(\sM;T\sM),  $$
\begin{equation}
\label{Perfect slip weak formulation}
\la A_{-1/2} u | v\ra_{\sM} =\omega (u | v)_{\sM} +\mu_s (\nabla u   | \nabla v)_{\sM} -\mu_s (\Ric^\sharp u   |  v)_{\sM} - \mu_s (     L_\Sigma u |   v)_{\Sigma}.
\end{equation}

\section{$H^\infty$-calculus of surface Stokes operator with Navier boundary conditions}\label{Section:NS with Navier boundary condition}
\label{Stokes-H-infity}
Recall the definition of the {\em weak Stokes operator with Navier boundary conditions} $\Aw_N: X_{1/2} \to X_{-1/2}$ provided in  Section~\ref{surface Stokes operator}.
In view of \eqref{Perfect slip weak formulation}, easy computations  show  that
\begin{align*}
\la (\omega + \Aw_N) u | v\ra_{\sM} = \la A_{-1/2} u | v\ra_{\sM} + 2\mu_s (L_\Sigma u   | v )_{\Sigma} + \alpha\mu_s (u|v)_{\Sigma}
\end{align*}
is valid for all $(u,v)\in  H^1_{q,\sigma}(\sM;T\sM)\times   H^1_{q',\sigma}(\sM;T\sM)  $.
We define the operator $B_N: \sD(B_N)\to X_{-1/2}$ by
$$
\la B_N u | v\ra_{\sM}
=2\mu_s (L_\Sigma u   | v )_{\Sigma} + \alpha\mu_s (u|v)_{\Sigma}
$$
for all $(u,v)\in \sD(B_N)\times   H^1_{q',\sigma}(\sM;T\sM)  $. The domain $\sD(B_N)$ will be specified in the following calculations.
By trace theory and H\"older's inequality, we have
	\begin{align*}
\left| \la B_N u | v\ra_{\sM} \right|  & \le C \|u\|_{L_q(\Sigma)}\|v\|_{L_{q'}(\Sigma)} \leq C \|u\|_{H^s_{q,\sigma}(\sM)}\|v\|_{H^1_{q',\sigma}(\sM)}
\end{align*}
for any $s>1/q$.
Thus, by choosing $\sD(B_N)=H^s_{q,\sigma}(\sM;T\sM)=[X_{-1/2},X_{1/2}]_{\theta}$ with $s\in (1/q,1)$ and $\theta=(s+1)/2 $, $B_N\in \cL(\sD(B_N), X_{-1/2})$ and thus is a lower order perturbation of $A_{-1/2}: X_{1/2}\to X_{-1/2}$.
Then it again follows from \cite[Corollary 3.3.15]{PruSim16} that, by possibly enlarging $\omega_0>0$,
\begin{equation}
\label{cHi ANw}
\omega+ \Aw_N \in H^\infty(X_{-1/2}) \text{ with $H^\infty$-angle } < \pi/2 \quad \text{for all  }\omega>\omega_0.
\end{equation}
Next, we will show that $A_N$ also admits bounded $H^\infty$-calculus.
Take $\omega>0$ sufficiently large so that $\omega+\Aw_N$ is invertible.
Given any $u\in \sD(A_N)$, see \eqref{Strong-Stokes-Navier} and \eqref{D(AN)},
there exists a unique $w\in X_{1/2}$ such that $(\omega+A_N ) u =( \omega+\Aw_N ) w$.
Because of \eqref{weak formulation Navier 1}, for all $v\in H^1_{q',\sigma}(\sM;T\sM)$  we have
\begin{align*}
(( \omega+A_N )u | v)_{\sM} =  \la ( \omega+\Aw_N) w | v\ra_{\sM} = \la ( \omega+\Aw_N) u | v\ra_{\sM}.
\end{align*}
The injectivity of $ \omega+\Aw_N$ implies that $u=w$. Thus, ${\rm gr}( A_N)\subset {\rm gr}(\Aw_N)$,
where ${\rm gr}(\cA)$ is the graph of an operator $\cA$ in $X_{-1/2}$.
Lemma \ref{lem: mat} further implies  that  $\omega+A_N: \sD(A_N)\subset L_{q,\sigma}(\sM;T\sM) \to L_{q,\sigma}(\sM;T\sM)$ is closed and bijective. Therefore, $\omega+A_N$ is the $L_{q,\sigma}(\sM;T\sM)$-realization of $\omega+\Aw_N$ and it inherits the bounded $H^\infty$-calculus property, i.e.,
there exists $\omega_0>0$ such that
\begin{equation}
\label{cHi AN}
\omega+ A_N \in H^\infty(X_0) \text{ with $H^\infty$-angle }   < \pi/2 \quad \text{for all  }\omega>\omega_0.
\end{equation}

\begin{lemma}
\label{lem: mat}
There exists $\lambda_0>0$ such that for every $\lambda>\lambda_0$ and $f\in L_{q,\sigma}(\sM;T\sM)$
\begin{equation}
\label{surject AN}
\left\{\begin{aligned}
(\lambda -\PH \Delta_\sM  - \PH \Ric^\sharp) u &= f &&\text{on}&&\sM ,\\
\alpha u + \cP_\Sigma \left( (\nabla u + [\nabla  u]^{\sT} )  \nu_\Sigma \right)   &=0 &&\text{on}&&\Sigma, \\
(u| \nu_\Sigma)_g  &=0 &&\text{on}&&\Sigma
\end{aligned}\right.
\end{equation}
has a unique solution $u\in H^2_{q,\sigma}(\sM;T\sM)$.
\end{lemma}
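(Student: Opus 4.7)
The plan is to leverage the already-established bijectivity of $\lambda + \Aw_N : X_{1/2} \to X_{-1/2}$ from \eqref{cHi ANw} and then upgrade the regularity of the resulting weak solution from $H^1$ to $H^2$. Uniqueness will come essentially for free from the injectivity of $\lambda + \Aw_N$ in combination with the identity \eqref{weak formulation Navier 1}: any $u \in H^2_{q,\sigma}(\sM;T\sM)$ satisfying \eqref{surject AN} is automatically a weak solution in $X_{1/2}$, hence is uniquely determined.

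Choose $\lambda_0 \geq \omega_0$ with $\omega_0$ as in \eqref{cHi ANw}. For $\lambda > \lambda_0$ and $f \in L_{q,\sigma}(\sM;T\sM)$, the continuous embedding $L_{q,\sigma}(\sM;T\sM) \hookrightarrow X_{-1/2}$ produces a unique $u \in X_{1/2} = H^1_{q,\sigma}(\sM;T\sM)$ satisfying $(\lambda + \Aw_N) u = f$, that is,
\begin{equation*}
\lambda (u|v)_\sM + \mu_s (\nabla u | \nabla v)_\sM - \mu_s (\Ric^\sharp u | v)_\sM + \mu_s(\alpha u + L_\Sigma u | v)_\Sigma = (f|v)_\sM
\end{equation*}
for all $v \in H^1_{q',\sigma}(\sM;T\sM)$. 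The task is then to show that this variational $u$ lies in $H^2_{q,\sigma}(\sM;T\sM)$ and satisfies the strong Navier boundary condition; once this is accomplished, applying $\PH$ in the interior equation recovers \eqref{surject AN}$_1$.

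For the $H^2$-regularity I would reuse the localization machinery of Section~\ref{Section:NS with perfect slip boundary condition strong}. With the atlas $\{(U_k, \varphi_k)\}_{k\in\cK}$ and partition of unity $\{\xi_k^2\}$, the localized fields $\bar u_k := \Rck u$ weakly satisfy on $\bX_k$ an almost-constant-coefficient elliptic Lamé-type system, supplemented in the boundary charts ($k \in \cK_1$) by the localized condition \eqref{boundary condition normal local} and the localized Navier condition. The Navier boundary condition for the Lamé operator on $\bR^n_+$ is Lopatinski--Shapiro regular, so one obtains a direct analog of Proposition~\ref{pro:wellposedPS} with the perfect-slip condition replaced by Navier, yielding a resolvent estimate of the form \eqref{resolvent estimate Lk}. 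Combined with the divergence-transfer argument of Proposition~\ref{Prop: Lsp preserve divergence free} (which passes divergence-freeness from $f$ to the solution), this produces each $\bar u_k \in H^2_q(\bX_k;\bR^n)$ with summable norms, hence $u \in H^2_q(\sM;T\sM) \cap H^1_{q,\sigma}(\sM;T\sM) = H^2_{q,\sigma}(\sM;T\sM)$. Integration by parts in the weak identity then recovers $\alpha u + \cP_\Sigma((\nabla u + [\nabla u]^\sT)\nu_\Sigma) = 0$ on $\Sigma$.

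The main obstacle is the half-space analysis for the Navier condition, which must be redone from scratch because Section~\ref{Section:NS with perfect slip boundary condition strong} only handled perfect slip. The mechanics, however, are the same: verify the complementing condition for the constant-coefficient Navier problem on $\bR^n_+$, establish a sharp resolvent estimate analogous to \eqref{resolvent estimate Lk}, and then handle the transition to variable coefficients together with the lower-order curvature and friction terms $L_\Sigma u$, $\alpha u$, $\Ric^\sharp u$ via a Neumann series argument identical to the one used for perfect slip in Proposition~\ref{pro:wellposedPS}. These first-order boundary perturbations are strictly lower order than the principal symbol and absorb after possibly enlarging $\lambda_0$.
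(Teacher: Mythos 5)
Your overall plan---use the invertibility of $\lambda+\Aw_N$ from \eqref{cHi ANw} to produce a weak solution $u\in X_{1/2}$ and then upgrade its regularity to $H^2_{q,\sigma}$---is sensible and, as far as the first step goes, legitimate (since \eqref{cHi ANw} is established before this Lemma, there is no circularity). The gap is in the regularity upgrade. You propose to prove an analog of Proposition~\ref{pro:wellposedPS} for the \emph{non-projected} problem $(\lambda-\Delta_\sM+\Ric^\sharp)u=f$ with Navier boundary conditions, and then to invoke ``the divergence-transfer argument of Proposition~\ref{Prop: Lsp preserve divergence free}.'' That divergence-transfer argument hinges specifically on the perfect-slip condition: in its proof, the boundary contribution reduces to $((\nabla u-[\nabla u]^{\sf T})\nu_\Sigma\,|\,\gd\phi)_\Sigma$, which vanishes precisely because $\cP_\Sigma((\nabla u-[\nabla u]^{\sf T})\nu_\Sigma)=0$ for perfect slip. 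Under the Navier condition $\alpha u+\cP_\Sigma((\nabla u+[\nabla u]^{\sf T})\nu_\Sigma)=0$, one instead has $\cP_\Sigma((\nabla u-[\nabla u]^{\sf T})\nu_\Sigma)=-\alpha u-2L_\Sigma u$, so this boundary term does \emph{not} vanish, and divergence-freeness is not preserved by the non-projected Navier resolvent. Put differently: the whole reason the paper takes a detour through perfect slip is that the Helmholtz projection commutes with $L_{ps}$, and this commutation fails for $A_N$.

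Alternatively, if you intended to localize the weak identity directly (rather than build a separate strong solution and identify it), you run into a second version of the same obstruction: the variational equation you write down holds only against divergence-free test functions $v\in H^1_{q',\sigma}$. Testing with $\xi_k^2 w$ for arbitrary $w$ is not admissible, so localization requires reintroducing the pressure, which your argument does not do.

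The paper's actual route is different and handles exactly this point. It first solves the non-projected problem \eqref{eq:surj1} with perfect slip (Proposition~\ref{pro:wellposedPS}), then constructs an explicit solution $(v,\pi)=(\PH u,\,\lambda\psi_u-\dv u)$ of the full Stokes system \eqref{eq:surj2} \emph{with pressure} and perfect slip, with a resolvent estimate \eqref{eq:surj2a} controlling $\gd\pi$ as well as $v$. Only then does it perturb to Navier: the interior change $+\Ric^\sharp\to-\Ric^\sharp$ and the boundary change to $2L_\Sigma v+\alpha v$ are lower-order relative to the $\lambda$-dependent norm of the \emph{Stokes} system, and a Neumann-series argument absorbs them. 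The pressure is therefore essential, not optional: it is what makes the perturbation to Navier boundary conditions closable. Your proposal needs either to incorporate the pressure from the outset (solve a Stokes system, not a Laplace system) or to find a substitute for the divergence-transfer step; as written it does not close.
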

\begin{proof}
We proved in Proposition~\ref{pro:wellposedPS} that there exists $\lambda_0>0$ such that for all $\lambda\ge\lambda_0$, for all $f\in L_q(\sM;T\sM)$ and all $h\in W_q^{1-1/q}(\Sigma;T\Sigma)$ there exists a unique
solution $u\in H_q^2(\sM;T\sM)$ of the problem
\begin{equation}
\label{eq:surj1}
\left\{\begin{aligned}
(\lambda -  \Delta_\sM  +\Ric^\sharp ) u &=f &&\text{on}&&\sM ,\\
 \cP_\Sigma \left( (\nabla u -[\nabla  u]^{\sT} )  \nu_\Sigma \right)   &=h &&\text{on}&&\Sigma  , \\
(u| \nu_\Sigma)_g  &=0 &&\text{on}&&\Sigma,
\end{aligned}\right.
\end{equation}
and, in addition, there exists a constant $C=C(\lambda_0)>0$ such that the estimate
\begin{equation}\label{eq:surj1a}
\lambda\|u\|_{L_q(\sM)}+\|u\|_{H_q^2(\sM)}\le C\left(\|f\|_{L_q(\sM)}+\lambda^{1/2}\|H\|_{L_q(\sM)}+\|H\|_{H_q^{1}(\sM)}\right)
\end{equation}
holds for the solution $u\in H_q^2(\sM;T\sM)$ of \eqref{eq:surj1}, where $H$ is any extension of $h$ from $W_q^{1-1/q}(\Sigma)$ to $H_q^1(\sM)$.

In a first step, we will show that there
exists $\lambda_0>0$ such that for all $\lambda\ge\lambda_0$, for all $f\in L_q(\sM;T\sM)$ and all $h\in W_q^{1-1/q}(\Sigma;T\Sigma)$ there exists a unique solution $(v,\pi)\in H_q^2(\sM;T\sM)\times \dot{H}_q^1(\sM)$ of
the Stokes problem
\begin{equation}
\label{eq:surj2}
\left\{\begin{aligned}
(\lambda -  \Delta_\sM  +\Ric^\sharp ) v+ \gd\,\pi  &=f &&\text{on}&&\sM ,\\
\dv\,v&=0 &&\text{on}&&\sM ,\\
 \cP_\Sigma \left( (\nabla v -[\nabla  v]^{\sT} )  \nu_\Sigma \right)   &=h &&\text{on}&&\Sigma  , \\
(v| \nu_\Sigma)_g  &=0 &&\text{on}&&\Sigma,
\end{aligned}\right.
\end{equation}
satisfying the estimate
\begin{equation}\label{eq:surj2a}
\lambda\|v\|_{L_q(\sM)}+\|v\|_{H_q^2(\sM)}+\|\gd\,\pi\|_{L_q(\sM)}
\le C \left(\|f\|_{L_q(\sM)}+\lambda^{1/2}\|H\|_{L_q(\sM)}+\|H\|_{H_q^{1}(\sM)} \right).
\end{equation}
Indeed, let $v:=\PH u=u-\gd\,\psi_u$, where $u\in H_q^2(\sM;T\sM)$ is the unique solution of \eqref{eq:surj1}
and  $\gd\psi_u \in H_q^2(\sM; T\sM)$ is the unique solution of
\begin{equation*}
\left\{\begin{aligned}
\Delta_B \psi_u&=\dv\, u &&\text{on}&&\sM ,\\
(\gd\,\psi_u| \nu_\Sigma)_g  &=0 &&\text{on}&&\Sigma,
\end{aligned}\right.
\end{equation*}
see Lemma~\ref{Apendix Lemma Helmholtz}.
Defining $\pi:=\lambda\psi_u-\dv\, u$, it follows from \eqref{Def Ricci} that the pair $(v,\pi)$ is a solution of \eqref{eq:surj2}.
Moreover, by \eqref{est Helmholtz}  and \eqref{eq:surj1a}, we have the estimates
\begin{align*}
\|\gd\,\psi_u\|_{H_q^2(\sM)}&  \le C\|u\|_{H_q^2(\sM)} \\
&\le C\left(\|f\|_{L_q(\sM)}+ \lambda^{1/2}\|H\|_{L_q(\sM)}+\|H\|_{H_q^{1}(\sM)} \right)
\end{align*}
and
\goodbreak
\begin{align*}
\|\gd\,\pi\|_{L_q(\sM)}&\le \lambda\|\gd\,\psi_u\|_{L_q(\sM)}+\|\gd\,\dv\,u\|_{L_q(\sM)}\\
&\le C\left(\lambda\|u\|_{L_q(\sM)}+\|u\|_{H_q^2(\sM)}\right)\\
&\le C\left(\|f\|_{L_q(\sM)}+ \lambda^{1/2}\|H\|_{L_q(\sM)}+\|H\|_{H_q^{1}(\sM)}\right),
\end{align*}
and therefore, the functions $(v,\pi)$ satisfy the estimate \eqref{eq:surj2a}.

Uniqueness of the solution $(v,\pi)$ to \eqref{eq:surj2} can be seen as follows. Assume that $f=0$ and $h=0$ in \eqref{eq:surj2}. Then $u:=\PH v=v$ solves \eqref{eq:surj1} with $f=0$ and $h=0$, as $\PH L_{ps}=L_{ps}\PH$
 (see Section \ref{Section:NS with perfect slip boundary condition}) and $\bP_H \gd \pi=0$.
 Since the solution to \eqref{eq:surj1} is unique, it follows that $v=u=0$. Inserting $v=0$ into \eqref{eq:surj2} we obtain $\gd\,\pi=0$, and therefore $\pi=0$ in $\dot{H}_q^1(\sM)$.

Having the unique solvability  of \eqref{eq:surj2} and the estimate \eqref{eq:surj2a} at hand, we may apply a perturbation argument as in the proof of Proposition \ref{pro:wellposedPS} in order to replace the left hand side of $\eqref{eq:surj2}_1$ by
$$\lambda v-\Delta_{\sM} v-\Ric^\sharp v=(\lambda v-\Delta_{\sM} v+\Ric^\sharp v)-2\Ric^\sharp v$$
and the boundary condition $\eqref{eq:surj2}_3$ by
$$\cP_\Sigma \left( (\nabla v +[\nabla  v]^{\sT} )  \nu_\Sigma \right)+\alpha v=\cP_\Sigma \left( (\nabla v -[\nabla  v]^{\sT} )  \nu_\Sigma \right)+2L_\Sigma v+\alpha v.$$
Indeed, for $v\in H_q^2(\sM;T\sM)$ it holds that
$$\|\Ric^\sharp v\|_{L_q(\sM)}\le C\|v\|_{L_q(\sM)}\le \lambda^{-1} C\left(\lambda\|v\|_{L_q(\sM)}+\|v\|_{H_q^2(\sM)}\right).$$
Concerning the boundary condition, we observe that $2L_\Sigma v+\alpha v\in W_q^{2-1/q}(\Sigma)$ for $v\in H_q^2(\sM;T\sM)$. Hence, there exists an extension $Q(v)\in H_q^2(\sM)$ of $2L_\Sigma v+\alpha v$ such that
$$\|Q(v)\|_{H^{s}_q(\sM)}\le C(s)\|v\|_{H^s_q(\sM)},\quad v\in H^2_q(\sM;T\sM),\ s\in[0,2].$$
Then, by complex interpolation and Young's inequality,
$$
\|Q(v)\|_{H_q^{1}(\sM)}\le C\|v\|_{H_q^{1}(\sM)}\le \lambda^{-1/2} C\left(\lambda\|v\|_{L_q(\sM)}+\|v\|_{H_q^2(\sM)}\right),
$$
and
$$
\lambda^{1/2}\|Q(v)\|_{L_q(\sM)}\le \lambda^{1/2}C\|v\|_{L_q(\sM)}\le  \lambda^{-1/2}C\left(\lambda\|v\|_{L_q(\sM)}+\|v\|_{H_q^2(\sM)}\right).
$$
Therefore, a very similar Neumann series argument as in the proof of Proposition \ref{pro:wellposedPS} yields the existence of a number $\lambda_0>0$ such that for all $\lambda\ge\lambda_0$ and for all $f\in L_q(\sM;T\sM)$ there exists a unique solution
$(u,\pi)\in H_q^2(\sM;T\sM)\times \dot{H}_q^1(\sM)$ of the problem
\begin{equation*}
\left\{\begin{aligned}
(\lambda -  \Delta_\sM  -\Ric^\sharp ) u+ \gd\,\pi  &=f &&\text{on}&&\sM ,\\
\dv\,u&=0 &&\text{on}&&\sM ,\\
 \cP_\Sigma \left( (\nabla u +[\nabla  u]^{\sT} )  \nu_\Sigma \right)+\alpha u   &=0 &&\text{on}&&\Sigma  , \\
(u| \nu_\Sigma)_g  &=0 &&\text{on}&&\Sigma,
\end{aligned}\right.
\end{equation*}
satisfying the estimate
\begin{equation*}
\lambda\|u\|_{L_q(\sM)}+\|u\|_{H_q^2(\sM)}+\|\gd\,\pi\|_{L_q(\sM)}
\le C \|f\|_{L_q(\sM)}.
\end{equation*}
\end{proof}
\noindent
\emph{Proof of Theorem~\ref{thm:Stokes-H-infinty}.}
The assertions (a) and (b) of the Theorem are contained in
\eqref{cHi ANw} and \eqref{cHi AN}. \quad $\square$

\section{Existence and uniqueness of solutions}
\label{Section:existence and uniqueness}

Based on the bounded $H^\infty$-calculus property of $\Aw_N$ and $A_N$, the local well-posedness of  \eqref{NS-sys} can be proved  as in \cite{PrWi18, PrSiWi21, SiWi22}.
For the sake of completeness, we will nevertheless include a proof here.

By applying  the Helmholtz projection $\PH$ on \eqref{NS-sys}$_1$, one can readily see that the weak formulation of \eqref{NS-sys} is equivalent to the following abstract semilinear evolution equation
\begin{equation}
\label{NS sys abstract}
\left\{\begin{aligned}
\partial_t u + \Aw_N u &=\Fw(u),      && t>0 ,\\
u(0) & = u_0 ,&&
\end{aligned}\right.
\end{equation}
where for all $(u,v)\in H^1_{q,\sigma}(\sM;T\sM) \times H^1_{q',\sigma}(\sM;T\sM) $
$$
 \la \Fw (u) | v\ra_{\sM}  = ( u\otimes u_\flat | \nabla v )_{\sM},
$$
where we used Lemma~\ref{Appendix Lem: divergence thm}  and the fact that $\nabla_u u = \dv (u\otimes u)$ (which holds
since $\dv u=0)$.
For notational brevity, we put
$$
\Xw_0=X_{-1/2} \quad \text{and} \quad \Xw_1=X_{ 1/2}.
$$
Due to \eqref{interpolation of X},
$$
\Xw_\beta:= [\Xw_0, \Xw_1]_\beta= H^{2\beta-1}_{q,\sigma}(\sM;T\sM) \hookrightarrow L_{2q,\sigma}(\sM;T\sM)
$$
provided $2\beta-1\geq n/2q$. Then by taking $2\beta-1=n/2q$ and using H\"older's inequality, we infer that
\begin{align*}
\left|  \la \Fw (u) | v\ra _{\sM} \right| \leq \|u\| _{L_{2q}(\sM)}^2 \|\nabla v\|_{L_{q'}(\sM)} \leq C \|u\| _{\Xw_\beta}^2 \|v\|_{H^1_{q'}(\sM)}
\end{align*}
and hence
$$
\|\Fw  (u)\|_{\Xw_0} \leq C \|u\| _{\Xw_\beta}^2.
$$
With $2\beta-1=n/2q$, which means $q\in (n/2,\infty)$ as $\beta<1$, the critical weight $\mwc$ and the corresponding critical space in the weak setting read as
$$
\Xwm=(\Xw_0,\Xw_1)_{\mwc-1/p,p}=B^{n/q-1}_{qp,\sigma}(\sM;T\sM), \quad \mwc=2\beta-1+\frac{1}{p}= \frac{1}{p}+\frac{n}{2q},
$$
with $2/p+n/q\le 2$, see \eqref{interpolation of X} and \cite[Proposition 2.4 \& Section 3.3]{PrWi18} (for the Euclidean case).

Next, let us compute the critical spaces in the strong setting.
To this end, we consider the following abstract evolution equation, which is equivalent to the strong formulation of \eqref{NS-sys}
\begin{equation}
\label{NS sys abstract strong}
\left\{\begin{aligned}
\partial_t u + A_N u &=F(u):=-\PH (\nabla_u u),     && t>0 ,\\
u(0) & = u_0 .&&
\end{aligned}\right.
\end{equation}
It follows from H\"older's inequality that
$$
\|F(u)\|_{L_q(\sM)} \leq C \|u\|_{L_{q r'}(\sM)} \|u\|_{H^1_{q r}(\sM)},
$$
where $1/r + 1/r'=1$.
We choose  $\displaystyle 1-\frac{n}{qr}= -\frac{n}{q r'}$, or equivalently $\displaystyle \frac{n}{qr}=\frac{1}{2}\left(1+\frac{n}{q} \right)$,
so that
$$H^1_{qr}(\sM; T\sM ) \hookrightarrow L_{qr'}(\sM; T\sM).$$
Note that this choice is feasible if $q\in (1,n)$.

By interpolation theory and  Sobolev's embedding theorem, $$
[X_0,X_1]_\beta \subset H^{2\beta}_q (\sM;T\sM)\hookrightarrow H^{1}_{qr} (\sM;T\sM),
$$
provided
$$
2\beta-\frac{n}{q}=1-\frac{n}{qr}, \quad \text{or equivalently} \quad \beta=\frac{1}{4} \left( \frac{n}{q}+1 \right).
$$
In summary, we have shown that
$$\| F(u)\|_{L_q}\le C \| u \|^2_{X_\beta},\quad u\in X_\beta. $$
The condition $\beta<1$ requires $q>n/3$. Hence, for $q\in (n/3,n)$, the critical weight in the strong setting is given by
$$
\mu_c=2\beta-1+\frac{1}{p}=\frac{1}{2}\left(\frac{n}{q}-1 \right) + \frac{1}{p}, \quad  \text{with} \quad \frac{2}{p}+ \frac{n}{q}\leq 3,
$$
and  the corresponding critical space in the strong setting is given by
$
X_{\gamma,\mu_c}:=(X_0,X_1)_{\mu_c-1/p,p},
$
where
\begin{equation}
\label{strong-critical}
X_{\gamma,\mu_c} 
= B^{n/q-1}_{qp ,\sigma, \cB} (\sM;T\sM)\ \ \text{in case}\ \ n/q -1\neq1+1/q,
\end{equation}
see Proposition~\ref{Appendix Prop interpolation 2}.

\medskip
The above discussions give  rise to the following theorem concerning the local well-posedness of \eqref{NS sys abstract}, respectively \eqref{NS-sys}.
\goodbreak
\begin{theorem}
\label{Thm: local wellposed}
{\rm (a)}
Let $p\in (1,\infty) $ and $q\in (n/2,\infty)$ such that $\frac{2}{p}+ \frac{n}{q}\leq 2$. Then for any initial value  $u_0\in B^{n/q-1}_{qp ,\sigma} (\sM;T\sM)$, there exists a unique weak solution
$$
u\in H^1_{p,\mwc}((0,t_+); H^{-1}_{q,\sigma}(\sM;T\sM))\cap L_{p,\mwc}((0,t_+); H^1_{q,\sigma}(\sM;T\sM))
$$
of \eqref{NS sys abstract}
for some $t_+=t_+(u_0)>0$ with $\mwc=1/p+n/2q$.
The solution exists on a maximal time interval $[0, t_{\max}(u_0))$ and depends continuously on $u_0$.
Moreover,
$$
u \in C([0, t_{\max}); B^{n/q-1}_{qp ,\sigma} (\sM;T\sM)) \cap C((0, t_{\max}); B^{1-2/p}_{qp ,\sigma} (\sM;T\sM)).
$$
 If, in addition, $q\geq n$, the solution $u$ satisfies
\begin{equation}
\label{additional reg}
u\in H^1_{p,loc}((0,t_{\max}); L_{q,\sigma}(\sM;T\sM))\cap L_{p,loc}((0,t_{\max}); H^2_{q,\sigma}(\sM;T\sM)).
\end{equation}
Hence, any solution regularizes instantaneously and becomes a strong solution  in case $q\ge n$.

\medskip\noindent
{\rm (b)}
If $p\in (1,\infty) $ and $q\in (n/3,n)$ with $\frac{2}{p}+ \frac{n}{q}\leq 3$, then for any
initial value
$u_0\in X_{\gamma, \mu_c}$, see~\eqref{strong-critical} for a characterization,
there exists a unique strong solution
$$
u\in H^1_{p,\mu_c}((0,t_+); L_{q,\sigma}(\sM;T\sM))\cap L_{p,\mu_c}((0,t_+); H^2_{q,\sigma}(\sM;T\sM))
$$
of \eqref{NS sys abstract strong} for some $t_+=t_+(u_0)>0$ with $\mu_c=1/p+n/2q-1/2$.
The solution exists on a maximal time interval $[0, t_{\max}(u_0))$ and depends continuously on $u_0$.
Moreover,
$$
u \in C([0, t_{\max}); B^{n/q-1}_{qp ,\sigma} (\sM;T\sM)) \cap C((0, t_{\max}); B^{2-2/p}_{qp ,\sigma} (\sM;T\sM)).
$$
\end{theorem}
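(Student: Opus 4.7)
The plan is to treat both assertions as applications of the general theory of semilinear parabolic evolution equations in time-weighted $L_p$-spaces, following \cite{PruSim16, PrWi18}. The abstract reformulations \eqref{NS sys abstract} and \eqref{NS sys abstract strong} provide the common framework, and the critical-space machinery reduces the task to verifying two ingredients: (i) maximal $L_p$-regularity of the linear part in time-weighted spaces, and (ii) a bi-linear locally Lipschitz estimate on the nonlinearity at the critical scaling.

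First, I would record maximal $L_{p,\mu}$-regularity of $\omega + \Aw_N$ on $\Xw_0 = H^{-1}_{q,\sigma}(\sM;T\sM)$ and of $\omega + A_N$ on $X_0 = L_{q,\sigma}(\sM;T\sM)$, for any $\mu\in(1/p,1]$ and $\omega>\omega_0$. Both facts are direct consequences of Theorem~\ref{thm:Stokes-H-infinty}: a bounded $H^\infty$-calculus with angle $< \pi/2$ implies $\cR$-sectoriality with angle $< \pi/2$, hence maximal $L_p$-regularity, and this transfers to the time-weighted setting (cf.~\cite[Sec.~3.4]{PruSim16}). Next, the nonlinearity bounds sketched in the paragraphs immediately preceding the theorem statement give
\[
\|\Fw(u)\|_{\Xw_0} \leq C\|u\|_{\Xw_\beta}^2, \qquad 2\beta - 1 = n/(2q),
\]
and $\|F(u)\|_{X_0} \leq C\|u\|_{X_\beta}^2$ with $\beta = \tfrac14(n/q + 1)$. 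Polarizing the quadratic form $u\mapsto u\otimes u_\flat$ (which is natural thanks to the divergence form $\nabla_u u = \dv(u\otimes u)$) upgrades these to
\[
\|\Fw(u) - \Fw(v)\|_{\Xw_0} \leq C(\|u\|_{\Xw_\beta} + \|v\|_{\Xw_\beta})\|u-v\|_{\Xw_\beta},
\]
and similarly for $F$. The critical weights $\mwc = 1/p + n/(2q)$ and $\mu_c = 1/p + n/(2q) - 1/2$ are then forced by the relation $\beta = \mu - 1/p$, which places the trace space $(X_0,X_1)_{\mu-1/p,p}$ exactly at $B^{n/q-1}_{qp,\sigma}$ (respectively $B^{n/q-1}_{qp,\sigma,\cB}$, via Proposition~\ref{Appendix Prop interpolation 2}).

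With these two ingredients in hand, the critical-space local existence theorem of Pr\"uss--Wilke--Simonett directly yields, in both settings, a unique maximal solution on $[0,t_{\max}(u_0))$, continuous dependence on the datum, the continuity statement $u\in C([0,t_{\max});X_{\gamma,\mu_c})$ in the trace space, and the instantaneous regularization $u\in C((0,t_{\max});X_{\gamma,1})$ corresponding to $B^{1-2/p}_{qp,\sigma}$ in case (a) and $B^{2-2/p}_{qp,\sigma}$ in case (b). The uniqueness statement, together with consistency of the weak and strong formulations on sufficiently regular data, reconciles the two parts on their overlapping parameter range.

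The remaining delicate point, which I expect to be the main obstacle, is the bootstrap to the additional regularity~\eqref{additional reg} when $q\ge n$ in part (a). Here the idea is that the weak solution satisfies $u(t)\in B^{1-2/p}_{qp,\sigma}$ for every $t>0$, and this space embeds into some $H^s_q$ with $s>n/q$ as soon as $q\ge n$ and $p<\infty$, hence into $L_\infty(\sM;T\sM)$. This $L_\infty$-control, combined with the $L_p(H^1_q)$-regularity of $u$ obtained from the weak formulation, produces $\PH(\nabla_u u)\in L_{p,\mathrm{loc}}((0,t_{\max});L_{q,\sigma})$. One then restarts the Cauchy problem at a positive time $t_0$ with datum $u(t_0)\in X_{\gamma,1}$, and invokes maximal $L_p$-regularity for the strong operator $A_N$ on $(t_0,T)$ to deduce~\eqref{additional reg}. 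The technical difficulty in this step is precisely that the endpoint embedding $B^{1-2/p}_{qp}\hookrightarrow L_\infty$ is borderline, so one has to be careful to use the instantaneous gain in regularity rather than the initial trace space, and to check that all compatibility conditions encoded in $\sD(A_N)$ are recovered from the Helmholtz projection and the boundary traces of the already-constructed weak solution.
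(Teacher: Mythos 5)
Your overall strategy for the existence/uniqueness portion is exactly the paper's: establish maximal $L_{p,\mu}$-regularity for $\Aw_N$ and $A_N$ from the $H^\infty$-calculus (Theorem~\ref{thm:Stokes-H-infinty}), check the bilinear estimates on the nonlinearity at the critical scaling, and invoke the critical-space local well-posedness theorem of Pr\"uss--Wilke (the paper cites \cite[Theorem~1.2]{PrWi17} and \cite[Theorem~2.1]{PrSiWi18}). This part is fine and essentially identical.

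The bootstrap argument for the additional regularity~\eqref{additional reg} when $q\ge n$, however, has a genuine gap. You claim $u(t_0)\in B^{1-2/p}_{qp,\sigma}$ and that this embeds into $L_\infty$ as soon as $q\ge n$. That embedding would require $1-2/p> n/q$ (or equality with $p=1$), i.e.\ $2/p+n/q<1$, which is far from being implied by the hypothesis $2/p+n/q\le 2$; in fact for $q=n$ it is \emph{always} false since $1-2/p<1=n/q$ for every finite $p$. You correctly sense that the embedding is ``borderline,'' but the resolution is not a more careful use of regularization within the fixed $p$-scale. The paper instead changes the time-integrability index from $p$ to $r=2p$: since $u_0\in B^{n/q-1}_{qp,\sigma}\hookrightarrow B^{n/q-1}_{qr,\sigma}=\Xw_{\gamma,\mu_r}$ with $\mu_r=1/r+n/2q<1$, by uniqueness the weak solution coincides with the one constructed in the $r$-scale and therefore regularizes to $u(t_0)\in B^{1-2/r}_{qr,\sigma}$ for $t_0>0$. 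This space embeds (for suitable $\mu\in(1/p,\,1/2+1/(2p))$) into the strong trace space $B^{2\mu-2/p}_{qp,\sigma}$, and the nonlinearity $F$ satisfies $\|F(u)\|_{L_q}\le C\|u\|_{L_\infty}\|u\|_{H^1_q}$ on $(X_0,X_1)_{\beta,p}$ for some $\beta$ with $2\beta>1$, which does embed into $L_\infty\cap H^1_q$ when $q\ge n$ because $2\beta>1\ge n/q$. The subcritical well-posedness theorem \cite[Theorem~2.1]{LePrWi14} applied to the strong problem on $(t_0,T)$ then gives~\eqref{additional reg}, and uniqueness identifies this with the original weak solution. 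So the essential device you are missing is precisely the switch to a larger time exponent $r>p$ before restarting, together with the separation between the initial weight $\mu$ and the interpolation level $\beta$ at which the nonlinearity is estimated.
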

\begin{proof}
Because of \eqref{cHi ANw} and \eqref{cHi AN}, the local existence and uniqueness of a solution is an
immediate consequence of   \cite[Theorem~1.2]{PrWi17}, see also \cite[Theorem~2.1]{PrSiWi18}.

It remains to show the additional regularity property~\eqref{additional reg}.
Suppose $q\ge n$ and choose $r=2p$. As $r>p$, we have
\begin{equation*}
B^{n/q-1}_{qp,\sigma}(\sM; T\sM)\hookrightarrow B^{n/q-1}_{qr,\sigma}(\sM; T\sM) = (\Xw_0, \Xw_1)_{\mu_r-1/r,r}
=:\Xw_{\gamma,\mu_r},
\quad\text{with} \ \ \mu_r=1/r+ n/2q.
\end{equation*}
Note that $\mu_r<1$.
We can now consider problem  \eqref{NS sys abstract} with initial value $u_0\in \Xw_{\gamma,\mu_r}$.
By  uniqueness and    \cite[Theorem~2.1]{PrSiWi18},
we conclude that the solution $u$ regularizes and satisfies
\begin{equation*}
u(t_0)\in B^{1-2/r}_{qr,\sigma}(\sM; T\sM)=(\Xw_0, \Xw_1)_{1-1/r,r}=:\Xw_{\gamma,1}
\end{equation*}
for any $t_0\in (0,t_{\max})$.
Choosing $\mu\in (1/p,1/2+1/2p)$,  we have the embedding
$$
B^{1-2/r}_{qr ,\sigma} (\sM;T\sM) \hookrightarrow B^{2\mu-2/p}_{qp ,\sigma} (\sM;T\sM)
$$
at our disposal. Next, we note that
$$
\|F(u)\|_{L_q(\sM)} \leq C \|u\|_{L_\infty(\sM)} \|u\|_{H^1_q(\sM)}
$$
for all
$$
u\in (X_0,X_1)_{\beta,p} \subset B^{2\beta}_{qp}(\sM;T\sM) \hookrightarrow L_\infty (\sM;T\sM) \cap H^1_{q } (\sM;T\sM),
$$
provided $2\beta>1$ and $q\ge n$.
Now we can solve \eqref{NS sys abstract strong} with initial value $u(t_0)\in B^{2\mu-2/p}_{qp ,\sigma} (\sM;T\sM)$
to obtain a strong solution by using \cite[Theorem~2.1]{LePrWi14}.
The asserted regularity~\eqref{additional reg} now follows from uniqueness of solutions.
\end{proof}

The following plot is helpful to illustrate the results in Theorem~\ref{Thm: local wellposed}.
\begin{figure}[hbt]
\includegraphics[scale=.25, angle=-90]{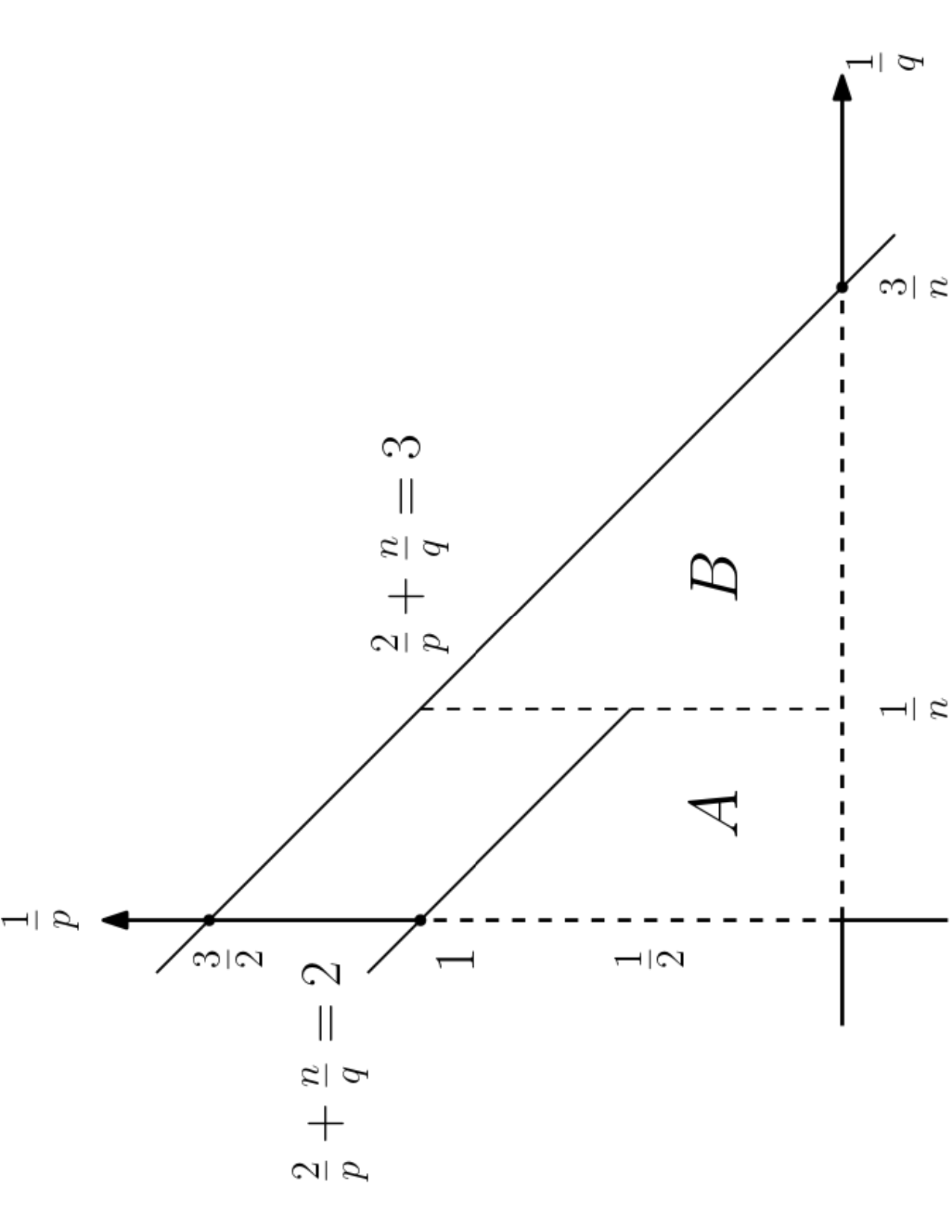}
\end{figure}
When $(\frac{1}{q}, \frac{1}{p})$ is
\medskip
\begin{itemize}
\item in region $A$ we have weak solutions, which immediately regularize  and become  strong solutions;
\item in region $B$, we have strong solutions.
\end{itemize}
\goodbreak

\medskip\noindent
The proof of Theorem~\ref{Thm: local wellposed} has an immediate byproduct.

\begin{corollary}
\label{Cor:existence}
Let $p\in (1,\infty) $,  $q\in [n,\infty)$ and $\mu\in (1/p,1]$.
Then for any initial value $u_0\in (X_0, X_1)_{\mu -1/p,p}$, where
$$
(X_0, X_1)_{\mu -1/p,p} = B^{2\mu-2/p}_{qp ,\sigma,\cB} (\sM;T\sM)\quad\text{in case}\quad 2\mu - 2/p \neq 1+1/q,
$$
there exists a unique strong solution
$$
u\in H^1_{p,\mu }((0,t_+); L_{q,\sigma}(\sM;T\sM))\cap L_{p,\mu }((0,t_+); H^2_{q,\sigma}(\sM;T\sM))
$$
of \eqref{NS sys abstract strong}
for some $t_+=t_+(u_0)>0$.
The solution exists on a maximal time interval $[0, t_{\max}(u_0))$ and depends continuously on $u_0$.
Moreover,
$$
u \in C([0, t_{\max}); B^{2\mu-2/p}_{qp ,\sigma} (\sM;T\sM)) \cap C((0, t_{\max}); B^{2-2/p}_{qp ,\sigma} (\sM;T\sM)).
$$
\end{corollary}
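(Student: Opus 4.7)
The plan is to apply the abstract local well-posedness theory for semilinear parabolic equations in time-weighted spaces, as developed in \cite{LePrWi14, PrWi17}, to the abstract evolution equation \eqref{NS sys abstract strong}. Two ingredients are required: maximal $L_{p,\mu}$-regularity of $A_N$, and a locally Lipschitz estimate for the convection nonlinearity $F(u) = -\PH(\nabla_u u)$ at the level of the time-trace space.

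For the first ingredient, Theorem~\ref{thm:Stokes-H-infinty}(b) yields a bounded $H^\infty$-calculus of $\omega + A_N$ on $X_0 = L_{q,\sigma}(\sM; T\sM)$ with $H^\infty$-angle $< \pi/2$; by the general theory in \cite{PruSim16}, this implies maximal $L_{p,\mu}$-regularity for every $p \in (1,\infty)$ and every $\mu \in (1/p, 1]$. The associated time-trace space at $t = 0$ is the real interpolation space $(X_0, X_1)_{\mu - 1/p, p}$, which admits the Besov characterization $B^{2\mu - 2/p}_{qp,\sigma,\cB}(\sM; T\sM)$ under the compatibility condition $2\mu - 2/p \neq 1 + 1/q$, by Proposition~\ref{Appendix Prop interpolation 2}. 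For the second ingredient, I would reuse the bilinear bound
\[
\|F(u) - F(v)\|_{X_0} \leq C\bigl(\|u\|_{X_\beta} + \|v\|_{X_\beta}\bigr)\|u - v\|_{X_\beta}
\]
derived in the proof of Theorem~\ref{Thm: local wellposed}(b) with $\beta = (n/q + 1)/4$, whose critical weight is $\mu_c = 1/p + (n/q - 1)/2$. Since $q \geq n$ forces $\mu_c \leq 1/p$, every $\mu \in (1/p, 1]$ lies strictly above the critical threshold, so the trace space embeds into $(X_0, X_1)_{\mu_c - 1/p, p}$ and $F$ remains locally Lipschitz from a neighborhood of $u_0$ in the trace space into $X_0$.

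Combining these ingredients, the abstract local well-posedness result \cite[Theorem 2.1]{LePrWi14} (equivalently \cite[Theorem 1.2]{PrWi17}) produces the unique local strong solution in the asserted class, the maximal existence interval $[0, t_{\max}(u_0))$, and continuous dependence on the initial datum. The membership $u \in C([0, t_{\max}); B^{2\mu - 2/p}_{qp,\sigma})$ is immediate from the trace-space identification, while $u \in C((0, t_{\max}); B^{2 - 2/p}_{qp,\sigma})$ follows by parabolic smoothing: for any $t_0 > 0$ one has $u(t_0) \in (X_0, X_1)_{1 - 1/p, p}$, and re-solving the equation starting from $t_0$ with weight $\mu = 1$ delivers the stated higher regularity. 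The main technical obstacle I anticipate is checking the nonlinear Lipschitz property uniformly for $\mu$ close to $1/p$, where the trace space $B^{2\mu - 2/p}_{qp}$ itself does not embed into $L_\infty \cap H^1_q$; this is exactly what the subcriticality $\mu > \mu_c$ resolves, by transferring the required control from a pointwise-in-time trace bound to the weighted maximal-regularity norm $L_{p,\mu}((0,T); X_1) \cap H^1_{p,\mu}((0,T); X_0)$ of the solution, which the abstract framework exploits.
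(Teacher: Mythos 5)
Your plan correctly identifies the two ingredients — maximal $L_{p,\mu}$-regularity from Theorem~\ref{thm:Stokes-H-infinty}(b) and a bilinear Lipschitz bound for $F$ — and the parabolic smoothing argument at the end is fine, but the specific nonlinear estimate you propose to "reuse" does not transfer to the regime $q\ge n$ of the corollary, and this is a genuine gap.

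The bound $\|F(u)\|_{X_0}\le C\|u\|_{X_\beta}^2$ with $\beta=(n/q+1)/4$ is obtained in the proof of Theorem~\ref{Thm: local wellposed}(b) via H\"older's inequality $\|F(u)\|_{L_q}\le C\|u\|_{L_{qr'}}\|u\|_{H^1_{qr}}$, where $r$ is chosen so that $\frac{1}{r}=\frac{1}{2}\bigl(\frac{q}{n}+1\bigr)$. This forces $r\le 1$ whenever $q\ge n$, so the H\"older split is no longer available. Equivalently, for $q>n$ one has $2\beta=(n/q+1)/2<1$, so $X_\beta\subset H^{2\beta}_q(\sM;T\sM)$ does not embed into $H^1_q(\sM;T\sM)$ and there is no way to control the full gradient in $\nabla_u u$ from $X_\beta$ alone. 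Thus the bilinear estimate you invoke is not established (and is false) in the range $q>n$; the endpoint $q=n$ is borderline with $\beta=1/2$, $\mu_c=1/p$. Relatedly, your claim that "the trace space embeds into $(X_0,X_1)_{\mu_c-1/p,p}$" breaks down because $\mu_c-1/p=(n/q-1)/2\le 0$ for $q\ge n$, so that index is outside $(0,1)$ and the interpolation space is not even defined.

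What the paper actually uses (this is the estimate appearing at the end of the proof of Theorem~\ref{Thm: local wellposed}(a), which the paper signals by calling the corollary "an immediate byproduct") is the $L_\infty$-based bound
$$\|F(u)\|_{L_q(\sM)}\le C\,\|u\|_{L_\infty(\sM)}\,\|u\|_{H^1_q(\sM)},\qquad u\in(X_0,X_1)_{\beta,p}\hookrightarrow L_\infty(\sM;T\sM)\cap H^1_q(\sM;T\sM),$$
valid whenever $q\ge n$ and $2\beta>1$. For any $\mu\in(1/p,1]$ one then selects $\beta\in\bigl(\tfrac12,\tfrac12(\mu+1-1/p)\bigr]$ (a nonempty interval precisely because $\mu>1/p$) so that both $2\beta>1$ and the subcriticality constraint $2\beta-1+1/p\le\mu$ hold, together with $\beta\ge\mu-1/p$; then \cite[Theorem~2.1]{LePrWi14} applies directly and yields local well-posedness with $\mu$-weighted maximal regularity, the trace-space characterization from Proposition~\ref{Appendix Prop interpolation 2}, and, after instantaneous regularization to $(X_0,X_1)_{1-1/p,p}$, the asserted continuity into $B^{2-2/p}_{qp,\sigma}$. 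So the correct repair of your argument is to replace $\beta=(n/q+1)/4$ by an appropriately chosen $\beta>1/2$ and to use the $L_\infty$–$H^1_q$ H\"older split available for $q\ge n$, rather than the $L_{qr'}$–$H^1_{qr}$ split that only makes sense for $q<n$.
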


\begin{remark}
\label{Rmk: two cases wellposed}
Concerning Theorem \ref{Thm: local wellposed}, two cases are of particular interest.
\begin{itemize}
\item[(i)] Suppose that $n\geq 2$. Then for every $u_0\in L_{n,\sigma}(\sM;T\sM)$, \eqref{NS sys abstract} has a unique solution satisfying
the regularity properties stated in  Theorem~\ref{Thm: local wellposed}(a)  with $q=n$ for each   fixed $p\ge n$.
Therefore, Theorem \ref{Thm: local wellposed} reproduces the celebrated results by Giga and Miyakawa \cite{GigaMiyakawa85}
 (obtained for  no-slip boundary conditions) for Navier boundary conditions.
\item[(ii)] Suppose that $n= 2,3$. Choosing $p=q=2$
we can admit initial values $u_0\in H^{n/2-1}_{2,\sigma}(\sM;T\sM)$.
This generalizes the celebrated results by  Fujita and Kato \cite{KatoFujita62, KatoFujita64}.

In particular, if $n=2$, for any $u_0\in L_{2,\sigma}(\sM;T\sM)$, \eqref{NS sys abstract} has a unique solution satisfying
the regularity properties stated in Theorem~\ref{Thm: local wellposed}(a) with $q=p=2$. Moreover, the solution satisfies
$$
u\in H^1_{p,loc}((0,t_{\max}); L_{q,\sigma}(\sM;T\sM))\cap L_{p,loc}((0,t_{\max}); H^2_{q,\sigma}(\sM;T\sM))
$$
 for any fixed $p,q>1$.
\end{itemize}
\end{remark}
\begin{proof}
The assertions can be shown by following the proofs of \cite[Corollary 4.4 and Theorem~4.5]{PrSiWi21} line by line.
\end{proof}

\section{Large time behavior}
\label{Section:large time behavior}
\subsection{Characterization of equilibria}
We will begin the analysis of large time behavior by a characterization of the spectrum of $\Aw_N$.
Since $X_{1/2}$ is compactly embedded in $X_{-1/2}$, the spectrum $\sigma(\Aw_N)$ consists only of isolated eigenvalues and is independent of  the choice of $q$.
By  Green's first identity,   Lemma~\ref{Appendix Lem: divergence thm}, one obtains for all   $(u,v) \in \sD(A_{N,q}) \times H^1_{q',\sigma}(\sM;T\sM)$
\begin{equation}\label{weak formulation Navier 2}
\begin{split}
  \la \Aw_N u | v\ra_{\sM} &=  (A_N u | v)_{\sM}\\
&= 2\mu_s ( D_u  | D_v)_{\sM}  - \mu_s (\cP_\Sigma (\nabla u + [\nabla  u]^{\sT} )  \nu_\Sigma  | v)_{ \Sigma } \\
&= 2\mu_s ( D_u  |  D_v)_{\sM} +\alpha \mu_s  (\cP_\Sigma u | \cP_\Sigma v)_{ \Sigma }\\
&=  2\mu_s ( D_u | D_v)_{\sM} +\alpha \mu_s  (  u |   v)_{ \Sigma }.
\end{split}
\end{equation}
By a density argument, one readily sees that \eqref{weak formulation Navier 2} holds for all $(u,v)\in H^1_{q ,\sigma}(\sM;T\sM) \times H^1_{q' ,\sigma}(\sM;T\sM)$.
Suppose
$$
\Aw_N u = \lambda u,
$$
for some $\lambda \in \mathbb C$ and $u\in H^1_{2,\sigma}(\sM;T\sM_\bC)$, with $T\sM_\bC=T\sM+i T\sM$ denoting the complexified  tangent bundle.
 \eqref{weak formulation Navier 2} implies
\begin{equation}
\label{eigenvalue of ANw}
\lambda\|u\|_{ L_2(\sM) }^2= 2\mu_s\| D_u\|^2_{ L_2(\sM) } +\alpha \mu_s \|u\|_{L_2(\Sigma)}^2,
\end{equation}
and thus, $\lambda\geq 0$, i.e., $\sigma(\Aw_N)\subset [0,\infty)$.
It follows that $\omega+\Aw_N \in \cS(L_{q,\sigma}(\sM; T\sM))$ is invertible with spectral angle $\phi_{\omega+\Aw_N} <\pi/2$ for all $\omega>0$.
Applying  one more time \cite[Corollary~3.3.15]{PruSim16}  and taking advantage of \eqref{cHi ANw} yields
$\omega+ \Aw_N \in H^\infty(X_{-1/2}) $ with $H^\infty$-angle $< \pi/2$, for all $\omega>0$.
Following the discussion in Section~\ref{Section:NS with Navier boundary condition}, it is not hard to see that the same holds true for $A_N$.

Moreover, \eqref{eigenvalue of ANw} shows that $\sN(\Aw_N)\subseteq \cE_\alpha$, which is defined by
$$
\cE_\alpha=
\begin{cases}
 \{u\in H^1_{2,\sigma}(\sM;T\sM): \,    D_u =0  \text{ on }\sM \quad \text{and}\quad u=0 \text{ on }\Sigma\} \quad & \text{if } \alpha>0 \\
 \{u\in H^1_{2,\sigma}(\sM;T\sM): \,    D_u=0  \text{ on }\sM \quad \text{and}\quad (u|\nu_\Sigma)_g =0 \text{ on }\Sigma\} & \text{if } \alpha=0.
\end{cases}
$$
Conversely, if $u\in \cE_\alpha$, \eqref{weak formulation Navier 2} implies that
$\Aw_N u =0$.
Hence, $\sN(\Aw_N)=\cE_\alpha$.

\begin{remark}\label{Rmk:Killing}
Any element $u \in \cE_\alpha$ is a Killing vector field  on  $\sM$, that is, it satisfies
\begin{equation}
\label{Def killing}
(\nabla_v u | w)_g+ (\nabla_w u| v)_g =0 ,\quad \forall  v,w\in  C^\infty(\sM;T\sM).
\end{equation}
\end{remark}
\begin{proposition}\label{Prop: characterizing Ealpha}
$$
\cE_\alpha=
\begin{cases}
 \{0\} \quad & \text{if } \alpha>0 \\
 \{u\in C^\infty({\sM};T\sM): \,  D_u=0 \text{ on }\sM \quad \text{and}\quad (u|\nu_\Sigma)_g =0 \text{ on }\Sigma\} & \text{if } \alpha=0.
\end{cases}
$$
\end{proposition}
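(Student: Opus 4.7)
The plan is to first promote the weak Killing fields in $\cE_\alpha$ to smooth ones, then treat the cases $\alpha=0$ and $\alpha>0$ separately. Every $u\in\cE_\alpha$ satisfies the Killing equation (see Remark~\ref{Rmk:Killing}), i.e.\ $\nabla_i u_j+\nabla_j u_i=0$. Combining this with the Ricci commutator identity $[\nabla_i,\nabla_j]u_k = R_{ijk}{}^l u_l$ and $\dv u=0$ yields the classical Bochner-type identity
\begin{equation*}
\Delta_\sM u+\Ric^\sharp u=0
\end{equation*}
in the distributional sense. This is an elliptic equation with smooth coefficients, so interior elliptic regularity gives $u\in C^\infty(\sM\setminus\Sigma;T\sM)$. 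Smoothness up to $\Sigma$ follows from the overdetermined ellipticity of the Killing system $D_u=0$, whose weak solutions are automatically smooth (in fact real-analytic) up to the boundary whenever the metric is smooth; alternatively, in the case $\alpha>0$, the Dirichlet data $u|_\Sigma=0$ combined with the interior equation gives $u\in C^\infty(\sM;T\sM)$ by standard boundary elliptic regularity.

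Once smoothness is in hand, the case $\alpha=0$ is immediate, as the remaining defining conditions match. For $\alpha>0$, let $u\in\cE_\alpha$ with $u|_\Sigma=0$; the goal is $u\equiv 0$. In boundary-flattening coordinates $(x',x^n)$ near $p\in\Sigma$, the vanishing $u^i(x',0)=0$ gives $\partial_a u^i(x',0)=0$ for $a=1,\ldots,n-1$, hence $\nabla_X u|_\Sigma=0$ for every $X\in T\Sigma$. Substituting into the Killing equation then yields
\begin{equation*}
(\nabla_{\nu_\Sigma}u \,|\, w)_g = -(\nabla_w u \,|\, \nu_\Sigma)_g = 0 \ \ (w\in T\Sigma), \qquad (\nabla_{\nu_\Sigma}u \,|\, \nu_\Sigma)_g = 0,
\end{equation*}
so $\nabla u|_\Sigma\equiv 0$ as well. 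I would then invoke the classical rigidity of Killing fields on a connected Riemannian manifold: the one-jet $(u,\nabla u)$ satisfies a closed first-order linear ODE along geodesics, so its vanishing at a single point $p\in\Sigma$ forces $u\equiv 0$ throughout $\sM$.

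The technical heart is the boundary regularity of weak Killing fields in the $\alpha=0$ regime, where the Dirichlet shortcut is unavailable; this is where the overdetermined-elliptic nature of $D_u=0$ is essential. The Bochner identity for Killing fields and the rigidity argument via the ODE for $(u,\nabla u)$ in the $\alpha>0$ case are both classical and explain the striking asymmetry between the two boundary regimes.
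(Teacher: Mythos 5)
Your proposal is correct and takes essentially the same route as the paper: first upgrade weak Killing fields in $\cE_\alpha$ to smooth vector fields, and for $\alpha>0$ use $u|_\Sigma=0$ together with the Killing equation to show the one-jet $(u,\nabla u)$ vanishes on $\Sigma$, then invoke the classical rigidity of Killing fields on a connected manifold. Where the paper cites Priebe \cite[Lemma~3]{Pri94} for boundary regularity and Petersen \cite[Ch.~7, Prop.~28]{Petersenbook} for the rigidity step, you sketch the underlying arguments (Bochner identity and the prolonged first-order ODE for $(u,\nabla u)$); the only minor overclaim is the parenthetical ``real-analytic''---for a merely $C^\infty$ metric, weak Killing fields are $C^\infty$ but need not be analytic, though this is harmless since smoothness is all that is used.
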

\begin{proof}
Suppose $u\in H^1_{2,\sigma}(\sM ; T\sM )$ and $D_u=0$. Then $(D_u)_\flat=0$ as well.
Let $u=u^k \frac{\partial }{\partial x^k}$ be a representation of $u$ in local coordinates.
Then, in local coordinates,
$$
(D_u)_\flat= \left(g_{ki} u^k_{ | j} + g_{kj} u^k_{| i} \right) dx^i \otimes dx^j =: \left( u_{i | j} + u_{j| i} \right) dx^i \otimes dx^j.
 $$
The relation $(D_u)_\flat =0$ then reads
$$
u_{i | j} + u_{j| i}= \frac{\partial u_i} {\partial x^j} + \frac{\partial u_j} {\partial x^i } - 2\Gamma^\ell_{ij} u_\ell=0,
\quad 1\le i,j\le n.
$$
Arguing as in the proof of  \cite[Lemma~3]{Pri94},
we conclude that $u\in C^\infty(\sM; T\sM)$.

\medskip
Suppose now that $\alpha>0$. Let  $u\in \cE_\alpha$. Since $u=0$ on $\Sigma$, one can immediately infer that
$  \nabla_v  u  =0$ on $\Sigma$
 for any  $v\in C^\infty(\Sigma;  T\Sigma) $.
To show   $ \nabla_{\nu_\Sigma} u  =0$, we first observe that
$( \nabla_{\nu_\Sigma} u | \nu_\Sigma)_g =0,$
as $u$ is a Killing field, see \eqref{Def killing}.
Next, we get for any  $v\in C^\infty(\Sigma; T\Sigma) $
$$ ( \nabla_{\nu_\Sigma} u | v)_g = -(  \nabla_v  u  | \nu_\Sigma)_g=0 ,$$
where we use, once more, the property that $u$ is a Killing field  and  $ \nabla_v u =0$.
This yields $ \nabla_v u =0$ for each vector field $v\in C^\infty(\Sigma; T\sM)$, and hence $\nabla u=0$  on $\Sigma$.
It then follows from \cite[Chapter 7, Proposition 28]{Petersenbook} that $u=0$.
\end{proof}
 \goodbreak

\begin{remark}\label{Rmk:Killing 2}
Although, in general, $\cE_0 $ is non-trivial,
$\cE_0 =\{0\}$    holds under some specific conditions,  for instance in case
\begin{itemize}
\item[(i)] $\Ric^\sharp  < 0$ and  $L_\Sigma \geq 0$, or
\item[(ii)] $\Ric^\sharp \leq  0$ and $L_\Sigma> 0$,
\end{itemize}
see the proof of Lemma~\ref{Appendix Lem: korn},    and also\cite{Yano59, YanoAko65}.
\end{remark}

\begin{proposition}\label{Prop: equilibrium set}
$\cE_\alpha$ is the set of equilibria of \eqref{NS sys abstract},   respectively \eqref{NS sys abstract strong}.
That is, the set of equilibria  of \eqref{NS sys abstract},   respectively \eqref{NS sys abstract strong},
are exactly the Killing vector fields as characterized in Proposition~\ref{Prop: characterizing Ealpha}.

\end{proposition}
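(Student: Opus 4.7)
The plan is to prove $\cE_\alpha$ coincides with the equilibrium set of \eqref{NS sys abstract}/\eqref{NS sys abstract strong} by verifying the two inclusions separately, leveraging the already established identification $\sN(\Aw_N)=\cE_\alpha$ from the discussion preceding Remark~\ref{Rmk:Killing} together with a Killing-field identity that renders the nonlinearity a pure gradient.

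For the inclusion $\cE_\alpha \subseteq$ equilibria, fix $u \in \cE_\alpha$. By Proposition~\ref{Prop: characterizing Ealpha}, $u\in C^\infty(\sM;T\sM)$, hence $u\in H^2_{q,\sigma}(\sM;T\sM)$ for every $q\in(1,\infty)$. I would first verify that $u\in \sD(A_N)$: the condition $(u|\nu_\Sigma)_g=0$ is part of $\cE_\alpha$, and the boundary relation $\alpha u + \cP_\Sigma\bigl((\nabla u+[\nabla u]^{\sT})\nu_\Sigma\bigr)=0$ holds trivially, since $D_u=0$ gives $\nabla u+[\nabla u]^{\sT}=0$ on $\sM$, while for $\alpha>0$ additionally $u|_\Sigma=0$. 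Since $u\in\sN(\Aw_N)$, we have $A_N u=0$, so it remains to show $\Fw(u)=0$, or equivalently $\PH\nabla_u u=0$. Here the key step is to exploit the Killing identity \eqref{Def killing}: setting $v=u$ and letting $w$ range over arbitrary vector fields yields
\[
(\nabla_u u \,|\, w)_g = -(\nabla_w u \,|\, u)_g = -\tfrac{1}{2}\, w(|u|_g^2) = -\tfrac{1}{2}(\gd|u|_g^2 \,|\, w)_g,
\]
whence $\nabla_u u = -\tfrac{1}{2}\gd(|u|_g^2)$ is a pure gradient and therefore lies in $\sN(\PH)$.

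For the reverse inclusion, suppose $u$ is an equilibrium of \eqref{NS sys abstract}, i.e.\ $\Aw_N u=\Fw(u)$ in $X_{-1/2}$. I would pair both sides with $u$ itself and invoke formula \eqref{weak formulation Navier 2} to rewrite the left-hand side as
\[
\la \Aw_N u \,|\, u\ra_\sM = 2\mu_s\|D_u\|_{L_2(\sM)}^2 + \alpha\mu_s \|u\|_{L_2(\Sigma)}^2.
\]
For the right-hand side, using $\dv u=0$ on $\sM$ and $(u|\nu_\Sigma)_g=0$ on $\Sigma$, Lemma~\ref{Appendix Lem: divergence thm} gives
\[
\la\Fw(u)\,|\,u\ra_\sM = (\nabla_u u \,|\, u)_\sM = \tfrac{1}{2}\!\int_\sM u(|u|_g^2)\, d\mu_g = \tfrac{1}{2}\!\int_\Sigma |u|_g^2(u|\nu_\Sigma)_g\, d\sigma_g - \tfrac{1}{2}\!\int_\sM |u|_g^2\, \dv u\, d\mu_g = 0.
\]
Hence $\|D_u\|_{L_2(\sM)}^2=0$, and additionally $u|_\Sigma=0$ if $\alpha>0$. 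Combined with $(u|\nu_\Sigma)_g=0$, this is exactly membership in $\cE_\alpha$.

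The main technical obstacle I foresee is justifying the pairing $\la \Fw(u)\,|\,u\ra_\sM$ and the integration by parts when $u$ is only known to lie in $H^1_{q,\sigma}(\sM;T\sM)$ for some $q$ permitted by Theorem~\ref{Thm: local wellposed}; in particular, $u$ itself must serve as a test function in $H^1_{q',\sigma}(\sM;T\sM)$, and the cubic quantity $|u|_g^2\,\dv u$ must be integrable. Because $\sM$ is compact and Sobolev embeddings give $u\in L_r(\sM;T\sM)$ for all finite $r$ once $q\ge 2$, the cleanest route is to first upgrade any weak equilibrium to a classical one: since $\Fw(u)\in H^{-1+\varepsilon}_{q,\sigma}$ for small $\varepsilon>0$ by H\"older, elliptic regularity for $\Aw_N$ (available through the bounded $H^\infty$-calculus of Theorem~\ref{thm:Stokes-H-infinty} and Proposition~\ref{pro:wellposedPS}) bootstraps $u$ to $H^2_{q,\sigma}(\sM;T\sM)$ and, iteratively, to $C^\infty$. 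Thereafter both the test-function argument and the integration by parts are routine.
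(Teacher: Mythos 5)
Your proposal is correct and follows essentially the same approach as the paper's proof: for the inclusion ``equilibrium $\Rightarrow \cE_\alpha$'' you pair the equilibrium equation with $u$ itself and use \eqref{weak formulation Navier 2} together with the divergence theorem to force $D_u=0$ (and $u|_\Sigma=0$ when $\alpha>0$), and for the reverse inclusion you exploit the Killing identity to show $\nabla_u u = -\tfrac12\gd|u|^2_g$ is a pure gradient annihilated by $\PH$. Your extra paragraph on justifying $u$ as a test function and bootstrapping regularity is a sound refinement of a point the paper's proof treats implicitly (the paper writes ``equilibrium of \eqref{NS sys abstract}, i.e.\ $u\in\sD(A_N)$'' without comment); the only small blemish is the intermediate sign in $\la\Fw(u)\,|\,u\ra_\sM = (\nabla_u u\,|\,u)_\sM$, which via Green's identity would read $-(\nabla_u u\,|\,u)_\sM$, but since both quantities vanish identically this does not affect the argument.
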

\begin{proof}
Assume that $u_*$ is an equilibrium of \eqref{NS sys abstract}, i.e., $u\in \sD(A_N)$ and
$$
A_N u_* = F(u_*).
$$
Using the metric property of $(\cdot | \cdot)_g$
and the fact that $(u_* | \nu_\Sigma)_g=0$, it follows from \eqref{div-scalar} that
\begin{align*}
(F(u_*)|u_*)_{\sM }=- ( \nabla_{u_*} u_*  | u_*)_{ \sM } = -\frac{1}{2} \int_{\sM} \nabla_{u_*} |u_*|^2_g\, d \mu_g=0.
\end{align*}
Following the computations in \eqref{weak formulation Navier 2}, we get
$$
0= (F(u_*) | u_*)_{ \sM }=(\Aw_N (u_*) | u_*)_{ \sM }  =2\mu_s \| D_{u_*} \|_{ L_2(\sM) }^2 +\alpha \mu_s  \|  u_*\|_{L_2(\Sigma)}^2.
$$
Therefore, $u_*\in \cE_\alpha$.

Conversely, if $u_*\in \cE_\alpha$, then it is clear that
$A_N u_* = 0$. On the other hand, by the definition of $\PH$, one can show that
\begin{align*}
\PH \nabla_{u_*} u_* = \PH \left( (\nabla u_*) u_*  \right)= - \PH \left( [\nabla u_*]^\sT u_*  \right)
 = - \frac{1}{2} \PH \left( \gd  |u_*|_g^2 \right)=0.
\end{align*}
This shows that $u_*$ is an  equilibrium   of \eqref{NS sys abstract strong}.
\end{proof}

With the convention that $H^0_{2,\sigma}(\sM;T\sM):=L_{2,\sigma}(\sM;T\sM)$, we put
\begin{equation}
\label{E-orth}
V^j_2=\{u\in H^j_{2,\sigma}(\sM;T\sM): \, ( u|v)_{\sM}=0 \quad \forall v\in \cE_\alpha\}, \quad j=0,1.
\end{equation}
In \cite[Lemma~6]{Pri94}, it was shown that $\cE_\alpha$ is a finite dimensional space.
Thus, $V^j_2$ is a closed subspace of $H^j_{2,\sigma}(\sM;T\sM)$ and
\begin{equation}
\label{direct sum}
H^j_{2,\sigma}(\sM;T\sM)= \cE_\alpha \oplus V^j_2, \quad j=0,1,
\end{equation}
by a similar argument to \cite[Remark~4.10(a)]{SiWi22}.


\subsection{Global existence and convergence for 2D}

In  this subsection, we consider the case $n=2$.
Given any $u_*\in \cE_\alpha$, we consider the evolution equation
\begin{equation}
\label{NS sys abstract-aux}
\left\{\begin{aligned}
\partial_t u + A_N u &=G_*(u):=-\PH (\nabla_u u + \nabla_u u_* + \nabla_{u_*} u ),     && t>0 ,\\
u(0) & = u_0 &&
\end{aligned}\right.
\end{equation}
and its weak counterpart
\begin{equation}
\label{NS sys abstract-aux-weak}
\left\{\begin{aligned}
\partial_t u + \Aw_N u &=G^{\sw}_*(u),    && t>0 ,\\
u(0) & = u_0, &&
\end{aligned}\right.
\end{equation}
where
$$\langle G^{\sw}_*(u) | \phi\ra_\sM = (u\otimes u_\flat | \nabla \phi)_\sM + (u_* \otimes u_\flat | \nabla \phi)_\sM + (u\otimes (u_*)_\flat | \nabla \phi)_\sM,
\quad \phi\in \dot H^1_{q^\prime,\sigma}(\sM; T\sM).$$

\medskip
Note that, by choosing $p=q=2$, the critical weight is $\mwc=1$ and the corresponding critical trace space is
$$
\Xwm=(X_{-1/2},X_{1/2})_{1/2,2}=[X_{-1/2},X_{1/2}]_{1/2}=L_{2,\sigma}(\sM;T\sM),
$$
see \eqref{interpolation of X}.
Hence, by applying a similar argument to Remark~\ref{Rmk: two cases wellposed}(ii), one can immediately infer that for every $u_0\in L_{2,\sigma}(\sM;T\sM)$,  \eqref{NS sys abstract-aux-weak} has a unique solution
\begin{equation*}
u\in H^1_2((0,t_+); H^{-1}_{2,\sigma}(\sM;T\sM))\cap L_2((0,t_+); H^1_{2,\sigma}(\sM;T\sM))
\end{equation*}
for some $t_+=t_+(u_0)>0$.
The solution exists on a maximal time interval $[0,t_{\max}(u_0))$.
In addition, it holds that
\begin{equation}
\label{2D-regularization}
u\in H^1_{p,loc}((0,t_{\max}); L_{q,\sigma}(\sM;T\sM))\cap L_{p,loc}((0,t_{\max}); H^2_{q,\sigma}(\sM;T\sM))
\end{equation}
 for any fixed $p,q\in (1,\infty)$,  and $u$ also solves  \eqref{NS sys abstract-aux}.

Next we show that any solution of \eqref{NS sys abstract-aux-weak} with initial value $u_0\in L_{2,\sigma}(\sM)$ that is orthogonal to $\cE_\alpha$ remains orthogonal
for all later times. Moreover, we establish an energy estimate for such solutions.
\goodbreak
\begin{lemma}\label{Lem: aux global}
Assume that $n=2$. Given $u_0\in V^0_2$,  let $u$ be the unique solution of \eqref{NS sys abstract-aux-weak}. Then
\begin{itemize}
\item[{\em (a)}] $u(t)\in V^1_2$ for all $t\in (0,t_{\max}(u_0))$;
\item[{\em (b)}] there exists a constant $C>0$ such that
\begin{equation}
\label{integral ineq}
\|u(t)\|_{ L_2(\sM) }^2 + C\int_0^t \|u(s)\|_{H^1_2(\sM)}^2\, ds \leq \|u_0\|_{ L_2(\sM) }^2,\quad t\in (0,t_{\max}(u_0)) ;
\end{equation}
\item[{\em (c)}] $t_{\max} (u_0)=+\infty$. Moreover, there exists a constant $\beta>0$ such that
\begin{equation}
\label{exp growth}
\|u(t)\|_{ L_2(\sM) }\leq e^{-\beta t} \|u_0\|_{ L_2(\sM) },\quad t\ge 0.
\end{equation}
\end{itemize}
\end{lemma}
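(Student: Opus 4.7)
For (a), the plan is to test the weak formulation \eqref{NS sys abstract-aux-weak} against an arbitrary $v_*\in\cE_\alpha$ and show that $\frac{d}{dt}(u(t)\,|\,v_*)_\sM=0$. For (b), I would test against $u(t)$ itself to obtain an energy identity, then apply Korn's inequality (Lemma~\ref{Appendix Lem: korn}) on the invariant subspace $V^1_2$ provided by (a). Part~(c) then reduces to a Gronwall argument plus the critical-space continuation criterion, since $L_{2,\sigma}(\sM;T\sM)$ is precisely the critical trace space for $p=q=2$, see Remark~\ref{Rmk: two cases wellposed}(ii).

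For (a), the Stokes part vanishes by \eqref{weak formulation Navier 2}: $\la\Aw_N u\,|\,v_*\ra_\sM = 2\mu_s(D_u\,|\,D_{v_*})_\sM+\alpha\mu_s(u\,|\,v_*)_\Sigma = 0$, using $D_{v_*}=0$ together with the characterization of $\cE_\alpha$ in Proposition~\ref{Prop: characterizing Ealpha} ($v_*=0$ on $\Sigma$ when $\alpha>0$; the prefactor $\alpha$ kills the boundary term when $\alpha=0$). For the nonlinear term I would split $\la G^\sw_*(u)\,|\,v_*\ra_\sM$ into its three trilinear pieces. Integration by parts via Lemma~\ref{Appendix Lem: divergence thm}, exploiting $\dv u=\dv u_*=0$ and $(u\,|\,\nu_\Sigma)_g=(u_*\,|\,\nu_\Sigma)_g=0$, reduces each piece to a pure bulk contraction of the form $\int T^{ij}\nabla_i(v_*)_j\,d\mu_g$. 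Since $v_*$ is Killing (Remark~\ref{Rmk:Killing}), the tensor $\nabla_i(v_*)_j$ is antisymmetric in $(i,j)$. Thus $(u\otimes u_\flat\,|\,\nabla v_*)_\sM$ vanishes because it contracts the symmetric tensor $u^iu^j$ with an antisymmetric one, while the sum $(u_*\otimes u_\flat\,|\,\nabla v_*)_\sM+(u\otimes(u_*)_\flat\,|\,\nabla v_*)_\sM$ vanishes by symmetrizing $u_*^iu^j+u^iu_*^j$ before contracting. Hence $V^0_2$ is invariant under the flow, and the parabolic smoothing \eqref{2D-regularization} upgrades this to $u(t)\in V^1_2$ for $t>0$.

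For (b), the Lions identity applied to $u\in H^1_2(H^{-1}_{2,\sigma})\cap L_2(H^1_{2,\sigma})$ yields
\[
\tfrac{1}{2}\tfrac{d}{dt}\|u\|^2_{L_2(\sM)}+2\mu_s\|D_u\|^2_{L_2(\sM)}+\alpha\mu_s\|u\|^2_{L_2(\Sigma)}=\la G^\sw_*(u)\,|\,u\ra_\sM.
\]
The first two pieces of $G^\sw_*$ collapse to $\tfrac{1}{2}\int\nabla_u|u|^2_g\,d\mu_g$ and $\tfrac{1}{2}\int\nabla_{u_*}|u|^2_g\,d\mu_g$, both zero by the divergence-free and tangential-boundary conditions on $u$ and $u_*$; the remaining piece $(u\otimes(u_*)_\flat\,|\,\nabla u)_\sM$ reduces, after one integration by parts, to a symmetric-antisymmetric contraction against the Killing tensor $\nabla(u_*)_\flat$ and also vanishes. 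Applying Korn's inequality (Lemma~\ref{Appendix Lem: korn}) on $V^1_2$ bounds $2\mu_s\|D_u\|^2_{L_2(\sM)}+\alpha\mu_s\|u\|^2_{L_2(\Sigma)}$ from below by $C\|u\|^2_{H^1_2(\sM)}$, and integrating in time gives \eqref{integral ineq}.

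For (c), combining the dissipation with the embedding $\|u\|_{L_2(\sM)}\le C\|u\|_{H^1_2(\sM)}$ (Poincaré, or equivalently Korn plus orthogonality to $\cE_\alpha$) yields a differential inequality $\tfrac{d}{dt}\|u\|^2_{L_2}\le-2\beta\|u\|^2_{L_2}$, and Gronwall produces \eqref{exp growth}. Finally, since $\|u(t)\|_{L_{2,\sigma}(\sM;T\sM)}$ stays uniformly bounded for all $t<t_{\max}(u_0)$ and $L_{2,\sigma}(\sM;T\sM)$ is the critical trace space for $p=q=2$, the standard blow-up criterion in critical spaces (cf.\ Theorem~\ref{Thm: local wellposed} and Remark~\ref{Rmk: two cases wellposed}(ii)) rules out $t_{\max}<\infty$, giving global existence.

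\textbf{Main obstacle.} The heart of the argument is the algebraic cancellation in the trilinear nonlinearity against a Killing field $v_*$ and against $u$ itself. The delicate point in (a) is that the individual pairings $(u_*\otimes u_\flat\,|\,\nabla v_*)_\sM$ and $(u\otimes(u_*)_\flat\,|\,\nabla v_*)_\sM$ need not vanish separately; only their sum does, once one symmetrizes the product of $u$ and $u_*$ before contracting with the antisymmetric Killing tensor. Once this is carried out cleanly, parts (b) and (c) follow from standard energy methods together with the previously established Korn inequality and $H^\infty$-calculus based continuation theory.
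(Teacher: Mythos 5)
Your argument is correct overall, and part (a) takes a slightly different and arguably cleaner route than the paper: you work directly with the weak trilinear form and observe a \emph{pointwise} symmetric-antisymmetric cancellation against the Killing tensor $\nabla(v_*)_\flat$, whereas the paper passes to the strong operators (via regularization), invokes the metric compatibility of $\nabla$ to split $(\nabla_{u_*}u\,|\,z)_g+(\nabla_u u_*\,|\,z)_g$ into a ``Killing'' piece and a ``total derivative'' piece, and then disposes of the latter with the divergence theorem. The two routes are equivalent because the boundary terms vanish ($(u\,|\,\nu_\Sigma)_g=(u_*\,|\,\nu_\Sigma)_g=0$), but yours avoids a round trip through the strong form. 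For part (b), your conclusion is right but the labeling of the trilinear pieces is off: a direct computation gives $(u\otimes u_\flat\,|\,\nabla u)_g=\tfrac12\nabla_u|u|^2_g$ and $(u\otimes(u_*)_\flat\,|\,\nabla u)_g=\tfrac12\nabla_{u_*}|u|^2_g$ (the first and \emph{third} pieces, both killed by the divergence theorem), while it is the \emph{second} piece $(u_*\otimes u_\flat\,|\,\nabla u)_\sM=(\nabla_u u\,|\,u_*)_\sM$ that must be integrated by parts once to $-(u\otimes u_\flat\,|\,\nabla u_*)_\sM$ and then annihilated by the Killing antisymmetry of $\nabla(u_*)_\flat$. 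You attribute the Killing cancellation to the third piece instead of the second; the total still vanishes, but as written the mechanism you invoke does not match the term you wrote. For (c), the key input is the $L_2((0,t_{\max});H^1_{2,\sigma})$ bound from (b), not mere uniform boundedness of the critical $L_2$-norm; boundedness in a critical space alone is not a continuation criterion in the Pr\"uss--Simonett--Wilke framework. The paper correctly cites the serial-continuation result from \cite[Theorem 2.4]{PrSiWi18}, which is what you should appeal to here rather than Theorem~\ref{Thm: local wellposed} or Remark~\ref{Rmk: two cases wellposed}(ii).
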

\begin{proof}
(a) Pick any  $z\in \cE_\alpha$.

In the sequel, we suppress the time variable and simply write $u$ in lieu of $u(t)$.
Following the computations in \eqref{weak formulation Navier 2}, we have
\begin{equation}
\label{weak formulation Navier 3}
 \langle A_N u | z \rangle_{\sM}
=   2\mu_s ( D_u | D_z)_{\sM} +\alpha \mu_s  (  u |   z)_{ \Sigma }=0.
\end{equation}
Moreover, it holds that
\begin{equation}
\label{weak formulation Navier 4}
 \langle G_*(u) | z \rangle_{\sM}
=  ( \nabla_u u | z)_{\sM} +  ( \nabla_{u_*} u | z)_{\sM} + ( \nabla_u u_* | z)_{\sM}=0 .
\end{equation}
We note  that in  \eqref{weak formulation Navier 3} and \eqref{weak formulation Navier 4} we may use the `strong' operators $A_N$ and $G_*$,
as solutions immediately regularize, see \eqref{2D-regularization}.
To show \eqref{weak formulation Navier 4}, we employ the metric property to obtain
\begin{align*}
  ( \nabla_{u_*} u | z)_g + ( \nabla_u u_* | z)_g
=    \nabla_{u_*}(  u | z)_g + \nabla_u (  u_* | z)_g  - (  u | \nabla_{u_*} z)_g - (  u_* | \nabla_u z)_g .
\end{align*}
Since $z$ is a Killing vector field, we infer that
\begin{equation}\label{weak formulation Navier 5}
(  u | \nabla_{u_*} z)_g + (  u_* | \nabla_u z)_g =0,
\end{equation}
see \eqref{Def killing}.
Meanwhile,    Lemma~\ref{Appendix Lem: divergence thm}, implies
$$
\int_\sM  \left[  \nabla_{u_*}(  u | z)_g + \nabla_u (  u_* | z)_g  \right] \, d\mu_g
 =0.
$$
Similar computations show that
\begin{align*}
( \nabla_u u | z)_{\sM}=  0.
\end{align*}
Combining \eqref{weak formulation Navier 3} and \eqref{weak formulation Navier 4} yields
$$
0= \la \partial_t u (t) | z \ra_{\sM} = \partial_t ( u (t)| z)_{\sM}.
$$
Hence $( u (t)  | z)_{\sM}=0$ and $u(t) \in V^1_2$ for all $t\in (0,t_{\max}(u_0))$.

(b)  Due to  \eqref{2D-regularization}, $u$ is a valid test function in \eqref{NS sys abstract-aux}.
Multiplying \eqref{NS sys abstract-aux}$_1$ by $u$ and integrating over $\sM$ yields
\begin{align*}
\frac{d}{dt} \|u(t)\|_{ L_2(\sM) }^2 = -4\mu_s \| D  (u)\|_{  L_2(\sM) }^2 -2\alpha \mu_s  \|  u \|_{L_2(\Sigma)}^2
+2 (G_*(u) | u)_{\sM}.
\end{align*}
Following a similar computation as in Part (a) and using the fact that $u_*$ is a Killing vector field, one can show that
$$
(G_*(u) | u)_{\sM}=0.
$$
We thus have
\begin{equation}
\label{integral ineq 2}
\frac{d}{dt} \|u(t)\|_{ L_2(\sM) }^2 =   -4\mu_s \| D  (u)\|_{ L_2(\sM) }^2 -2\alpha \mu_s  \|  u \|_{L_2(\Sigma)}^2
\leq   -C \|u\|_{H^1_2(\sM)}^2 ,
\end{equation}
where the last step follows from  Korn's inequality, cf. Lemma~\ref{Appendix Lem: korn}.
Integrating both side with respect to time gives \eqref{integral ineq}.

(c) Part (b) shows that
$$
u\in L_2((0,t_{\max}(u_0)); H^1_{2,\sigma} (\sM;T\sM)) .
$$
It follows from \cite[Theorem~2.4]{PrSiWi18} that $t_{\max}(u_0)=+\infty$.
An immediate consequence of \eqref{integral ineq 2}  is
$$
\frac{d}{dt} \|u(t)\|_{ L_2(\sM) }^2 + C \|u\|_{L _2(\sM)}^2  \leq 0 ,\quad \forall t>0.
$$
Solving the above ordinary differential inequality  gives \eqref{exp growth}.
\end{proof}

Now we are in a position to prove the main theorem of this subsection.
\begin{theorem}
\label{Thm: gloabl existence and convergence 2D}
Let $n=2$. Then for every $u_0\in L_{2,\sigma}(\sM;T\sM)$, the unique solution $u$ to \eqref{NS sys abstract}
with initial value $u_0$ exists globally and enjoys the regularity properties listed in Remark~\ref{Rmk: two cases wellposed}(ii).
Furthermore, for any fixed $q\in (1,\infty)$, $u$ converges to the equilibrium
$u_*:= \cP_{\cE_\alpha} u_0$  in the topology of $H^2_{q,\sigma}(\sM;T\sM)$  at an exponential rate as $t\to \infty$,
where $\cP_{\cE_\alpha}$ denotes the orthogonal projection from $L_{2,\sigma}(\sM;T\sM)$ onto $\cE_\alpha$.
\end{theorem}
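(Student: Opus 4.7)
\medskip
\noindent
\textbf{Proof sketch (proposal).}
The plan is to reduce to Lemma~\ref{Lem: aux global} by subtracting the equilibrium and then upgrade the resulting $L_2$--decay of the remainder to $H^2_q$--decay. Set $u_* := \cP_{\cE_\alpha} u_0 \in \cE_\alpha$ and $v_0 := u_0 - u_* \in V^0_2$, using the orthogonal decomposition \eqref{direct sum}. Put $v := u - u_*$. Since $u_*$ is a Killing field satisfying the boundary conditions, the proof of Proposition~\ref{Prop: equilibrium set} gives $A_N u_* = 0$ \emph{and} $\PH \nabla_{u_*} u_* = 0$ separately; substituting $u = u_* + v$ into \eqref{NS sys abstract} therefore yields exactly \eqref{NS sys abstract-aux} for $v$ with initial datum $v_0$. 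Because $v_0 \in L_{2,\sigma}(\sM;T\sM)$, Remark~\ref{Rmk: two cases wellposed}(ii) applied to this reduced problem gives a unique global weak solution with the announced regularity; Lemma~\ref{Lem: aux global}(a),(c) then guarantees $v(t) \in V^1_2$ for $t > 0$ together with the exponential bound $\|v(t)\|_{L_2(\sM)} \le e^{-\beta t}\|v_0\|_{L_2(\sM)}$. Translating back via $u = u_* + v$ and using that $u_* \in C^\infty(\sM; T\sM)$ by Proposition~\ref{Prop: characterizing Ealpha}, this already establishes global existence and exponential convergence in $L_2$.

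The remaining (and principal) task is to strengthen the $L_2$-exponential decay of $v$ to $H^2_{q,\sigma}$-exponential decay for any $q \in (1,\infty)$. The first ingredient is a spectral gap: from \eqref{weak formulation Navier 2} and Korn's inequality (as already exploited in the proof of Lemma~\ref{Lem: aux global}(b)), one has $(A_N u | u)_\sM \ge c_0 \|u\|^2_{H^1_2(\sM)} \ge c_0 \|u\|^2_{L_2(\sM)}$ for all $u \in V^1_2$. Combined with the fact that $A_N$ leaves $V^0_q$ invariant---which follows from \eqref{weak formulation Navier 2} applied with test vectors $z \in \cE_\alpha$, where both $D_z$ and $z|_\Sigma$ vanish by Proposition~\ref{Prop: characterizing Ealpha}---and with the bounded $H^\infty$-calculus of $A_N$ provided by Theorem~\ref{thm:Stokes-H-infinty}(b), one obtains that the semigroup generated by $-A_N$ on $V^0_q$ is analytic and satisfies $\|e^{-tA_N}\|_{\cL(V^0_q)} \le M e^{-\delta t}$ for some $\delta > 0$ (depending on $q$). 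The first-order lower-order cross terms $\PH(\nabla_v u_* + \nabla_{u_*} v)$ are a relatively compact perturbation and do not destroy this exponential stability.

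Next I would bootstrap using Duhamel's formula on unit-length intervals, relying on the parabolic regularization property \eqref{2D-regularization} which, for any $t_0 > 0$, places $v(t_0)$ in $B^{2-2/p}_{qp,\sigma}(\sM;T\sM)$ for arbitrary $p,q$. On $[t, t+1]$ with $t \ge 1$, apply the maximal regularity estimate in the $L_p$-$L_q$ setting to
\[
\partial_t v + A_N v = -\PH \bigl(\nabla_v u_* + \nabla_{u_*} v + \nabla_v v\bigr).
\]
In two dimensions the Sobolev embedding $H^1_2 \hookrightarrow L_r$ for every $r < \infty$ lets one estimate the quadratic term $\PH \nabla_v v$ in $L_p((t,t+1);L_q)$ by, say, $\|v\|_{L_\infty((t,t+1);H^1_2)}\|v\|_{L_p((t,t+1);H^1_q)}$; combined with the linear cross-terms, the total source is controlled by the lower-order norms of $v$ on the preceding interval. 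Iterating (first from $L_2$ to $H^1_2$ using \eqref{integral ineq} together with the spectral gap, and then from $H^1_2$ to $B^{2-2/p}_{qp,\sigma}$ and finally to $H^2_q$), one propagates the factor $e^{-\beta t}$ into each higher-regularity norm, yielding $\|v(t)\|_{H^2_q(\sM)} \le C e^{-\mu t}$ for some $\mu > 0$.

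The main obstacle is this last bootstrap: one has to show that the nonlinear correction does not prevent exponential decay in higher norms. The two technical points are (i) preservation of invariance of $V^0_q$ under the nonlinear flow---which holds because the nonlinearity is $L_2$-orthogonal to $\cE_\alpha$, exactly as established in the proof of Lemma~\ref{Lem: aux global}(a) via the Killing identity \eqref{weak formulation Navier 5}---and (ii) a careful use of maximal regularity in time-weighted $L_p$-spaces with exponentially decaying right-hand sides, so that the constants in the iteration stay uniform in $t$. Once these are in place, exponential decay of $v$ in $H^2_q$ follows, which is equivalent to the claimed convergence $u(t) \to u_*$ at exponential rate in $H^2_{q,\sigma}(\sM;T\sM)$.
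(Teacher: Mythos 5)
Your argument follows the paper's own route: decompose $u_0 = u_* + v_0$ with $u_* = \cP_{\cE_\alpha} u_0$ and $v_0 \in V^0_2$, observe that $v = u - u_*$ solves the auxiliary problem \eqref{NS sys abstract-aux} (using that $A_N u_* = 0$ and $\PH\nabla_{u_*}u_* = 0$ separately, as extracted from the proof of Proposition~\ref{Prop: equilibrium set}), and invoke Lemma~\ref{Lem: aux global} for global existence, invariance of $V^1_2$, and exponential $L_2$-decay. For the final upgrade to $H^2_{q,\sigma}$-decay the paper simply cites \cite[Theorem~4.9]{SiWi22}, whereas you sketch the corresponding Duhamel/maximal-regularity bootstrap explicitly; one small inaccuracy is the phrase ``$z|_\Sigma$ vanish by Proposition~\ref{Prop: characterizing Ealpha}'' --- when $\alpha=0$ the boundary term in \eqref{weak formulation Navier 2} drops out because $\alpha=0$, not because $z$ vanishes on $\Sigma$.
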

\begin{proof}
In view of  \eqref{direct sum}, we can decompose $u_0$ into $u_0=u_* + v_0$ such that $v_0\in V^0_2$.
Let $v(t)$ be the (unique) solution to
\begin{equation*}
\left\{\begin{aligned}
\partial_t v + \Aw_N v &=G^{\sw}_*(v),     && t>0 ,\\
v(0) & = v_0 .&&
\end{aligned}\right.
\end{equation*}
By Lemma~\ref{Lem: aux global}, $v$ exists globally. Then it follows from Proposition~\ref{Prop: equilibrium set} that
$$
u(t) = u_*+v(t)
$$
is the unique global solution of \eqref{NS sys abstract} with initial value $u_0$.
As was proved in Lemma~\ref{Lem: aux global},
$$
\|u(t)-u_*\|_{ L_2(\sM) } = \|v(t) \|_{ L_2(\sM) } \leq e^{-\beta t}\|v_0 \|_{ L_2(\sM) }
=e^{-\beta t}\|u_0 - u_* \|_{ L_2(\sM) },\quad t>0,
$$
for some $\beta>0$.
The convergence in the stronger topology $H^2_{q,\sigma}(\sM;T\sM)$ can be proved in the same way as in \cite[Theorem~4.9]{SiWi22}.
\end{proof}

\subsection{Stability near Killing vector fields}

In this subsection, we will establish the stability of solutions of \eqref{NS sys abstract}, respectively \eqref{NS-sys}, with initial values close to a Killing vector field
 for $n>2$.

For  any fixed $u_*\in \cE_\alpha$, the linearization of the operator $[u\mapsto (\Aw_N u - \Fw(u))]$ is given by the operator
$\Aw_0: X_{1/2} \to X_{-1/2}$, defined by
\begin{align*}
\la \Aw_0 u | v\ra_{\sM} =  \la \Aw_N u | v\ra_{\sM} -   (u\otimes (u_*)_\flat + u_*\otimes u_\flat | \nabla v)_{\sM}
\end{align*}
for all $(u,v)\in H^1_{q,\sigma}(\sM;T\sM) \times H^1_{q',\sigma}(\sM;T\sM)$.
In other words, $\Aw_0=\Aw_N + B $, where $B$ is the linear operator from $ L_{q,\sigma}(\sM;T\sM)$
to $X_{-1/2}$ defined by
$$
\la B u | v\ra_{\sM}  = -(u\otimes (u_*)_\flat + u_*\otimes u_\flat | \nabla v)_{\sM} ,\quad  v\in H^1_{q',\sigma}(\sM;T\sM).
$$
Proposition~\ref{Prop: characterizing Ealpha} shows that $\cE_\alpha \subset C^\infty({\sM}) $.
Direct computations yield
\begin{align*}
\left|  \la B u | v\ra_{\sM} \right|  & \le C  \|u\|_{L_q(\sM)} \|v\|_{H^1_{q'}(\sM)}.
\end{align*}
Therefore, $B\in \cL(L_{q,\sigma}(\sM;T\sM),  X_{-1/2} )$.
From \cite[Corollary~3.3.15]{PruSim16}, we infer that for some sufficiently large $\omega_0>0$
$$
\omega+ \Aw_0 \in H^\infty( X_{-1/2} ) \text{ with $H^\infty$-angle } < \pi/2 \quad \text{for all  }\omega>\omega_0.
$$
Let $A_0\in \cL(\sD(A_{N,q}), L_{q,\sigma}(\sM;T\sM) )$ be the operator defined by
$$
A_0 u = 2\mu_s \PH \dv D  (u)     + \PH \left( \nabla_u u_* + \nabla_{u_*} u \right).
$$
By applying a similar argument to $A_N$, we can show that by possibly further increasing $\omega_0>0$
$$
\omega+ A_0 \in H^\infty(L_{q,\sigma}(\sM;T\sM)) \text{ with $H^\infty$-angle }  < \pi/2 \quad \text{for all  }\omega>\omega_0.
$$
Since $X_{1/2}$ is compactly embedded in $X_{-1/2}$, the spectrum of $\Aw_0$ consists only of isolated eigenvalues
and is independent of  the choice of $q$. Suppose
$$
\Aw_0 u = \lambda u,
$$
for some $\lambda\in \mathbb C$.
Following the computations in \eqref{weak formulation Navier 2}, it is not difficult to check that
\begin{equation}
\label{weak formulation Navier 6}
\begin{split}
 {\rm Re}\, \lambda \|u\|_{ L_2(\sM) }^2
&=  {\rm Re}  ( \langle  \Aw_0 u | \overline{u}\ra _{\sM} ) \\
&=   2\mu_s\| D_u \|_{ L_2(\sM) }^2 +\alpha \mu_s  \|u\|_{L_2(\Sigma)}^2 + {\rm Re} (B u  |   \overline{u})_{\sM}.
\end{split}
\end{equation}
We observe that for all   $v\in H^1_{2,\sigma}(\sM;T\sM_\bC)$, with $T\sM_\bC$ denoting the complexified  tangent bundle,
\begin{equation*}
\begin{aligned}
{\rm Re}(B v  |  \overline{v})_g
&= - {\rm Re}\, (v\otimes (u_*)_\flat + u_*\otimes v_\flat | \nabla \overline v  )_g = -{\rm Re}\, [ ( u_* |  \nabla_v \overline v)_g  + ( v | \nabla_{u_*} \overline v )_g] \\
&= -{\rm Re}\,\nabla_v (u_* | \overline v)_g - \frac{1}{2} \nabla_{u_*} |v |^2_g  + {\rm Re}\, (\nabla_v u_*  | \overline v)_g,
\end{aligned}
\end{equation*}
where we used the metric property of $(\cdot | \cdot)_g$.
It follows from \eqref{div-scalar} that
$$
\int_{\sM}\left( {\rm Re}\nabla_v (u_* | \overline v)_g  - \frac{1}{2} \nabla_{u_*} |v |^2_g\right) \, d\mu_g =0.
$$
Finally, the definition of Killing vector fields implies
$$
{\rm Re} (   \nabla_v u_*    | \overline{v} )_g =0.
$$
Therefore,  ${\rm Re} (B u  |   \overline{u})_{\sM} =0$ and this shows that ${\rm Re} \lambda \geq 0$.
When ${\rm Re} \lambda=0$, one can infer from \eqref{weak formulation Navier 6} that
$ u \in \cE_\alpha$. This implies $\sN(\Aw_0)\subseteq \cE_\alpha.$

Conversely, if $z\in \cE_\alpha$, then for any $v\in H^1_{q',\sigma}(\sM;T\sM)$, the above computations show
\begin{align*}
 \langle  \Aw_0 z | v\ra_{\sM} =  ( \PH (  \nabla_z u_* + \nabla_{u_*} z )  |v )_{\sM}.
\end{align*}
As $ D_{u_*}= D_z=0$,  we obtain
\begin{align*}
\PH (  \nabla_z u_* + \nabla_{u_*} z ) &= \PH (  (\nabla u_*)z + (\nabla z ) u_*  )
= - \PH (  (\nabla u_*)^\sT z + (\nabla z )^\sT u_*  ) \\
& =    - \PH \gd (u_* | z)_g   =0
\end{align*}
in virtue of the definition of $\PH$.
This implies that $z\in  \sN(\Aw_0)$.
In summary, we conclude that
$$
\sN(\Aw_0)=\cE_\alpha
$$
and $\sigma(\Aw_0)\cap i \bR=\{0\}.$
Next, we will show that the eigenvalue $0$ of $\Aw_0$ is semi-simple. Indeed, if
$$
\Aw_0 u =z \in \cE_\alpha,
$$
then it follows from similar computations as in \eqref{weak formulation Navier 3} and \eqref{weak formulation Navier 5} that
\begin{align*}
\|z\|_{ L_2(\sM)  }^2 & =  \la \Aw_0 u | z\ra_{\sM}  \\
&=  \la \Aw_N u | z \ra_{\sM}   - (\PH (\nabla_u u^* + \nabla_{u^*} u )|z)_{\sM}\\
&= 2\mu_s (D_u |D_z )_{\sM}+ \alpha \mu_s (u|z)_\Sigma - ( \nabla_u u^*   |z)_{\sM} -(   \nabla_{u^*} u  |z)_{\sM} = 0.
\end{align*}
This shows that $z=0$ and thus, $\sN(\Aw_0)=\sN((\Aw_0)^2)$.
As $\cE_\alpha$ is a linear space, we clearly have $T_{u_*} \cE_\alpha=\sN(\Aw_0)$.
From Proposition~\ref{Prop: equilibrium set} and \cite{PruSim16}, we learn that $\Aw_N$ is normally stable.
So we can apply \cite[Theorem~5.3.1]{PruSim16} to obtain the following theorem.

\begin{theorem}
\label{Thm: stability near killing 1}
Suppose  that $n>2$, $p\in (1,\infty) $ and $q\in (n/2,\infty)$ such that $\frac{2}{p}+ \frac{n}{q}\leq 2$. \\
Then  for each  $u_*\in \cE_0$, there exists some $\delta=\delta(u_*)>0$ such that the solution $u$ of \eqref{NS sys abstract} with
initial value $u_0\in B^{n/q-1}_{qp,\sigma}(\sM;T\sM)$ satisfying
$$
\|u_0-u_*\|_{B^{n/q-1}_{qp}} \leq \delta
$$
exists globally and converges at an exponential rate to some  $z\in \cE_0$
\begin{itemize}
\item[{\em (i)}]    in the topology of $B^{1-2/p}_{qp }(\sM;T\sM)$,
\item[{\em (ii)}]   in the topology of $B^{2-2/p}_{qp }(\sM;T\sM)$ if  $q\geq n$.
\end{itemize}
\end{theorem}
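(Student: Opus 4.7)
The plan is to recognize the statement as a direct application of the \emph{generalized principle of linearized stability} for normally stable equilibria, as formulated in \cite[Theorem~5.3.1]{PruSim16}, to the semilinear problem \eqref{NS sys abstract}. The spectral analysis of $\Aw_0$ carried out in the paragraphs preceding the theorem has already verified essentially all of the hypotheses; my job is to organize them and then handle the upgrade in regularity.

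First I would record the structural facts about the equilibrium manifold and its linearization. By Proposition~\ref{Prop: characterizing Ealpha} and the finite-dimensionality result of \cite[Lemma 6]{Pri94}, $\cE_0$ is a finite-dimensional linear subspace of $C^\infty(\sM;T\sM)$, hence a smooth submanifold of $X_{\gamma,\mwc}=B^{n/q-1}_{qp,\sigma}(\sM;T\sM)$ with tangent space $T_{u_*}\cE_0=\cE_0$. From the discussion preceding the theorem I have (i) $\sigma(\Aw_0)\subset\{\mathrm{Re}\,\lambda\geq 0\}$, (ii) $\sigma(\Aw_0)\cap i\bR=\{0\}$, (iii) $\sN(\Aw_0)=\cE_0=T_{u_*}\cE_0$, and (iv) $0$ is a semisimple eigenvalue, i.e.\ $\sN(\Aw_0)=\sN((\Aw_0)^2)$. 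Compactness of $X_{1/2}\hookrightarrow X_{-1/2}$ ensures that $\sigma(\Aw_0)$ is discrete, and together these statements say exactly that $u_*$ is a \emph{normally stable} equilibrium.

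Next I would gather the analytic prerequisites of \cite[Theorem~5.3.1]{PruSim16}. Theorem~\ref{thm:Stokes-H-infinty} provides a bounded $H^\infty$-calculus for $\omega+\Aw_N$ with angle $<\pi/2$, hence maximal $L_{p,\mu}$-regularity; since $B$ is a lower-order perturbation, the same holds for $\omega+\Aw_0$. The nonlinearity $\Fw(u)=-\PH\mathrm{div}(u\otimes u_\flat)$ is bilinear, and the estimate $\|\Fw(u)\|_{\Xw_0}\leq C\|u\|_{\Xw_\beta}^2$ with $2\beta-1=n/2q$ from Section~\ref{Section:existence and uniqueness} shows that $\Fw\in C^\omega(X_{\gamma,\mwc},X_{-1/2})$ with $D\Fw(u_*)=-B$; this matches the Fr\'echet-differentiability hypothesis in the critical setting of \cite{PrWi17,PrSiWi18}. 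Applying \cite[Theorem~5.3.1]{PruSim16} then yields $\delta>0$ such that every initial datum $u_0\in B^{n/q-1}_{qp,\sigma}(\sM;T\sM)$ with $\|u_0-u_*\|_{B^{n/q-1}_{qp}}\leq\delta$ gives rise to a global solution that converges at an exponential rate to some $z\in\cE_0$ in the natural continuous trace space $B^{1-2/p}_{qp,\sigma}(\sM;T\sM)$, establishing item (i).

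For item (ii), I would exploit the instantaneous parabolic smoothing recorded in Theorem~\ref{Thm: local wellposed}(a) and Corollary~\ref{Cor:existence}: for $q\geq n$ every weak solution immediately becomes a strong solution lying in $B^{2-2/p}_{qp,\sigma}(\sM;T\sM)$ for positive times, and the corresponding norm is controlled by the $B^{n/q-1}_{qp}$-norm of $u(t_0)$ through the maximal-regularity estimate. Writing $v(t)=u(t)-z$, which solves a semilinear equation of the same type with initial datum that has already become small in $B^{n/q-1}_{qp}$ by item~(i), and applying this parabolic estimate on shifting intervals $[t,t+1]$ gives exponential decay of $\|v\|_{B^{2-2/p}_{qp}}$ from exponential decay of $\|v\|_{B^{n/q-1}_{qp}}$. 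The main obstacle I foresee is a bookkeeping one rather than a conceptual one: one must verify that the quadratic remainder $\Fw(u)-\Fw(u_*)-D\Fw(u_*)(u-u_*)$ obeys the smallness assumption of \cite[Theorem~5.3.1]{PruSim16} in the critical weighted space, but this is a direct consequence of the bilinear estimate above together with the embedding $X_{\gamma,\mwc}\hookrightarrow \Xw_\beta$ valid on the critical scale.
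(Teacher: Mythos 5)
Your overall strategy — verify normal stability of $u_*$ from the spectral analysis of $\Aw_0$ and invoke \cite[Theorem~5.3.1]{PruSim16} — is the right one, and your recap of the hypotheses (discrete spectrum, $\sigma(\Aw_0)\subset\{\mathrm{Re}\,\lambda\ge0\}$, $\sigma(\Aw_0)\cap i\bR=\{0\}$, $\sN(\Aw_0)=\cE_0=T_{u_*}\cE_0$ with $0$ semisimple) is accurate. However, there is a genuine gap in how you invoke the theorem for part (i): you claim $\Fw\in C^\omega(X_{\gamma,\mwc},X_{-1/2})$ and that \cite[Theorem~5.3.1]{PruSim16} can be applied directly with initial data in the critical space $B^{n/q-1}_{qp,\sigma}(\sM;T\sM)$. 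This is not available. That theorem requires $F\in C^1$ on an open neighborhood of $u_*$ in the \emph{non-weighted} trace space $(X^{\sf w}_0,X^{\sf w}_1)_{1-1/p,p}$, and your claimed embedding $X_{\gamma,\mwc}\hookrightarrow \Xw_\beta$ does not hold: with $\mwc-1/p=n/2q$ and $\beta=\frac12+\frac{n}{4q}$, the embedding $(X^{\sf w}_0,X^{\sf w}_1)_{n/2q,p}\hookrightarrow [X^{\sf w}_0,X^{\sf w}_1]_{\beta}$ would require $n/2q\ge\beta$, i.e.\ $n/q\ge 2$, which is precisely excluded by the hypothesis $q>n/2$. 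The bilinear estimate in Section~\ref{Section:existence and uniqueness} gives continuity of $\Fw$ from $\Xw_\beta$ to $\Xw_0$, but does not give $C^1$-differentiability on the critical trace space, so the normally-stable machinery cannot be started there.

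The paper circumvents exactly this difficulty by an intermediate smoothing step that your proposal skips. One first replaces $p$ by a larger index $r$ (with $r>2p$ for (i), and $r>\max\{p,2\}$ for (ii)), uses the embedding $B^{n/q-1}_{qp,\sigma}\hookrightarrow B^{n/q-1}_{qr,\sigma}=\Xw_{\gamma,\mu_r}$, and then combines the Lipschitz dependence on initial data in the $\mu_r$-weighted maximal-regularity space with instantaneous regularization to conclude that, after a short time $t_0$, the solution $u(t_0)$ is close to $u_*$ in the \emph{non-weighted} trace space $(\Xw_0,\Xw_1)_{1-1/r,r}=B^{1-2/r}_{qr,\sigma}$; the smallness is quantitatively controlled by $\|u_0-u_*\|_{B^{n/q-1}_{qp}}$. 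On this space one can verify $\Fw\in C^1(B^{1-2/r}_{qr,\sigma},\Xw_0)$ (using $B^{1-2/r}_{qr,\sigma}\hookrightarrow L_{2q,\sigma}$, which needs $r>2p$), and only then is \cite[Theorem~5.3.1]{PruSim16} applicable; the asserted convergence in $B^{1-2/p}_{qp}$ follows from $B^{1-2/r}_{qr}\hookrightarrow B^{1-2/p}_{qp}$. For part (ii), the paper re-runs the same argument in the strong formulation $(X_0,X_1)$ with $q\ge n$, rather than bootstrapping shifted parabolic estimates as you sketch; your bootstrap would need to control the nonlinearity uniformly over the shifting windows, which is more delicate than you indicate. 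In short: the plan is sound, but the key technical step — upgrading smallness from the critical weighted trace space to a non-critical trace space before applying the linearized stability theorem — is missing, and without it the argument in (i) does not close.
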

\begin{proof}
According to Theorem~\ref{Thm: local wellposed},  problem \eqref{NS sys abstract} has for each  $u_0\in B^{n/q-1}_{qp,\sigma}(\sM;T\sM)$
a unique solution $u$ in the regularity class asserted by the theorem.
In order to show assertions  (i) and (ii), we will employ \cite[Theorem~5.3.1]{PruSim16}
for initial values in  $(\Xw_0,\Xw_1)_{1-1/r,r}$
for the weak setting, or in $ (X_0,X_1)_{1-1/r,r}$ for the strong setting,
 with $r$ properly chosen.

\medskip\noindent
(i) We will first show that any solution $u$ to \eqref{NS sys abstract} with initial value $u_0$ close to $u_*$
 in $B^{n/q-1}_{qp,\sigma}(\sM; T\sM)$ will also be close to $u_*$ in $(\Xw_0, \Xw_1)_{1-1/r,r},$  for any fixed positive
 (sufficiently small) time and appropriate $r>p$.

 Suppose $r>p$. As in the proof  of Theorem \ref{Thm: local wellposed}, we have
\begin{equation}
\label{critical-2}
B^{n/q-1}_{qp,\sigma}(\sM; T\sM)\hookrightarrow B^{n/q-1}_{qr,\sigma}(\sM; T\sM) = (\Xw_0, \Xw_1)_{\mu_r-1/r,r}
=:\Xw_{\gamma,\mu_r},
\end{equation}
where $ \mu_r=1/r + n/2q$. Moreover,
$$u(t_0)\in B^{1-2/r}_{qr ,\sigma} (\sM;T\sM)= (\Xw_0, \Xw_1)_{1-1/r,r}=:\Xw_{\gamma,1}$$
for  any fixed time $t_0\in (0, t_{\max})$.
Using  Lipschitz continuity of solutions with respect to initial data 
and the regularization property, there exists a positive number $t_0$ and a
constant $C(t_0)$ such that
\begin{equation}
\label{small-1}
 \|u(t_0)-u_*\|_{B^{1-2/r}_{qr}}\le C(t_0) \|u_0- u_*\|_{B^{n/q-1}_{qr}}
\le C(t_0) \|u_0- u_*\|_{B^{n/q-1}_{qp}},
\end{equation}
for any initial value $u_0$ sufficiently close to $u_*$ in $B^{n/q-1}_{qp,\sigma}(\sM; T\sM)$.
Indeed, as solutions to \eqref{NS sys abstract} depend Lipschitz continuously on the initial data, see \cite[Theorem 1.2]{PrSiWi18},
there are numbers $t_0$ and $M>0$ such that
\begin{equation}
\label{first control}
\| u  -u_*  \|_{\bE^{\sw}_{1,\mu_r}(0,2t_0)} \leq M \|u_0 -u_* \|_{\Xw_{\gamma, \mu_r}}
\end{equation}
for any initial value $u_0$ sufficiently close to $u_*$ in
$\Xw_{\gamma,\mu_r}.$
Here we have set
$$
\bE^{\sw}_{1,\mu}(T_1,T_2) : =H^1_{p,\mu}((T_1,T_2); \Xw_0)\cap L_{p,\mu}((T_1,T_2); \Xw_1),$$
for $0\le T_1<T_2<\infty$.
Since
$ \bE^{\sw}_{1,\mu}(t_0,2t_0)\hookrightarrow \bE^{\sw}_{1,1}(t_0,2t_0)\hookrightarrow BU\!C((t_0,2t_0); X^\sw_{\gamma,1})$
for any $\mu\in (1/p, 1]$,
we obtain with \eqref{first control}
\begin{equation}
\label{2nd control}
\begin{aligned}
\|u(t_0)-u_*\|_{X^\sw_{\gamma,1}} &\le \sup_{t\in [t_0, 2t_0]} \|u(t)-u_*\|_{X^{\sw}_{\gamma,1}}\le
C \|u-u_*\|_{ \bE^{\sw}_{1,1}(t_0,2t_0) } \\
& \le  C t_0^{\mu_r-1}  \|u-u_*\|_{ \bE^{\sw}_{1,\mu_r}(t_0,2t_0) }
\le  C(t_0)  \|u_0 -u_* \|_{\Xw_{\gamma,\mu_r}}.
\end{aligned}
\end{equation}
The assertion in \eqref{small-1} follows now from \eqref{critical-2} and \eqref{2nd control}.

Hence  $\|u(t_0) -u_*\|_{B^{1-2/r}_{qr}}$ can be made as small as we wish by making $\|u_0- u_*\|_{B^{n/q-1}_{qp}} $ small.
It follows from the embedding
$$
B^{1-2/r}_{qr,\sigma}(\sM;T\sM) \hookrightarrow L_{2q,\sigma}(\sM;T\sM),
$$
 which holds true as $4/r + n/q <2$ (here we need $r>2p$), and  the estimate
$$
( u_1 \otimes (u_2)_\flat  | \nabla v)_{\sM}  \leq \|u_1\|_{L_{2q}(\sM)} \|u_2\|_{L_{2q}(\sM)}  \|v\|_{H^1_{q'}(\sM)}
$$
that $\Fw \in C^1(B^{1-2/r}_{qr,\sigma}(\sM;T\sM), \Xw_0)$.
We can now deduce from \cite[Theorem~5.3.1]{PruSim16} that each solution with initial value $u_0$ satisfying
$
\|u_0-u_*\|_{B^{n/q-1}_{qp}} \leq \delta,
$
with $\delta>0$ sufficiently small, exists globally and converges exponentially fast to some $z\in\cE_0$ in the topology of $B^{1-2/r}_{qr}(\sM;T\sM)$.
The embedding
$$B^{1-2/r}_{qr}(\sM;T\sM)\hookrightarrow B^{1-2/p}_{qp}(\sM;T\sM),$$
then yields the assertion in (i).

We note that by the embedding $B^{1-2/p}_{qp}(\sM;T\sM)\hookrightarrow B^{n/q-1}_{qp}(\sM;T\sM)$,
solutions also converge in the topology of  critical spaces.

\medskip
\noindent

(ii) The arguments in step (i) show that $u(t_0)\in B^{1-2/r}_{qr}(\sM; T\sM)$ and that
\eqref{small-1} holds true for any $r>p$ and any fixed  time $t_0\in (0, t_{\max})$.
In the following, we assume $r>\max\{p,2\}.$
We then have the embedding
$$B^{1-2/r}_{qr,\sigma}(\sM; T\sM)\hookrightarrow B^{2\mu-2/r}_{qr,\sigma}(\sM; T\sM)$$
for any fixed $\mu\in (1/r,1/2]$.
We can now consider problem \eqref{NS sys abstract strong} with initial value
$$u(t_0)\in (X_0, X_1)_{\mu-1/r,r} = B^{2\mu -2/r}_{qr,\sigma}(\sM; T\sM).$$
By regularization and uniqueness, we have
$$u(t_0+t_1)\in (X_0, X_1)_{1-1/r.r}\hookrightarrow B^{2-2/r}_{qr}(\sM; T\sM),$$
 for any fixed time $t_1>0$ such that $t_0+t_1<t_{\max}$.

An analogous argument to \eqref{2nd control}, with $\Xw_j$ replaced by $X_j$, $j=1,2$, and
$\mu_r$ replaced by $\mu$, shows that
$\| u(t_0+t_1)-u_*\|_{B^{2-2/r}_{qr}}$ can be made as small as we wish by choosing
$\| u_0 -u_*\|_{B^{n/q-1}_{qp}}$ small.

The condition $r>2$ and $q\geq n$  ensures
$$
(X_0, X_1)_{1-1/r,r} \hookrightarrow B^{2-2/r}_{qr}(\sM;T\sM)\hookrightarrow H^1_q(\sM; T\sM)\cap L_\infty(\sM; T\sM).
$$
Since
$
\|F(u)\|_{L_q(\sM)} \leq C \|u\|_{L_\infty(\sM)} \|u\|_{H^1_q(\sM)}
$
we conclude that  $F\in C^1 ((X_0, X_1)_{1-1/r,r}, X_0).$

\medskip
Theorem~5.3.1 in \cite{PruSim16} then implies
that each solution with initial value close to $u_*$ in $(X_0, X_1)_{1-1/r,r}$
exists globally and converges exponentially fast to some $z\in \cE_0$ in the topology of $B^{2-2/r}_{qr}(\sM; T\sM)$.
By the previous steps and the embedding $B^{2-2/r}_{qr}(\sM; T\sM)\hookrightarrow B^{2-2/p}_{qp}(\sM; T\sM)$,
which hold for ant $r>p$, we obtain assertion (ii).
\end{proof}
\begin{remark}
 In case $q\ge n$, analogous arguments as in \cite[Remarks 4.10] {SiWi22} show that
every global solution of \eqref{NS sys abstract}, respectively \eqref{NS sys abstract strong}, with initial value $u_0$ converges
exponentially fast to an equilibrium, namely to $\cP_{\cE_0}u_0$, where $\cP_{\cE_0}$ is the projection onto the finite dimensional space $\cE_0$.
Hence $z= \cP_{\cE_0}u_0$ in Theorem~\ref{Thm: stability near killing 1} in the particular case  $q\ge n$.
\end{remark}
\begin{corollary}
\label{cor:stability}
Suppose that $\cE_\alpha=\{0\}$,   $p\in (1,\infty) $ and $q\in (n/2,\infty)$ such that $\frac{2}{p}+ \frac{n}{q}\leq 2$. \\
Then there exists some $\delta>0$ such that the assertions (i) and (ii) of Theorem~\ref{Thm: stability near killing 1} hold true
with $z=0$
for any initial value $u_0\in B^{n/q-1}_{qp,\sigma}(\sM;T\sM)$ satisfying
$\|u_0\|_{B^{n/q-1}_{qp}} \leq \delta.$
\end{corollary}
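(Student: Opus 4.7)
The proof would be an immediate application of Theorem~\ref{Thm: stability near killing 1} with the distinguished Killing field $u_* = 0$, which trivially belongs to $\cE_\alpha$ for every $\alpha \ge 0$ since $D_0 = 0$ and the Navier boundary conditions are satisfied by the zero vector field. The plan is first to verify that $0$ is an admissible base point for the stability theorem, then to use the hypothesis $\cE_\alpha = \{0\}$ to force the asymptotic limit to collapse to $0$.

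For the choice $u_* = 0$, the linearized operator $\Aw_0$ appearing in the proof of Theorem~\ref{Thm: stability near killing 1} reduces to $\Aw_N$ itself (since $B \equiv 0$), and the smallness condition
\begin{equation*}
\|u_0 - u_*\|_{B^{n/q-1}_{qp}} \le \delta
\end{equation*}
reduces to $\|u_0\|_{B^{n/q-1}_{qp}} \le \delta$. Applying Theorem~\ref{Thm: stability near killing 1} then yields a threshold $\delta = \delta(0) > 0$ such that every initial value $u_0 \in B^{n/q-1}_{qp,\sigma}(\sM;T\sM)$ with $\|u_0\|_{B^{n/q-1}_{qp}} \le \delta$ gives rise to a global solution $u$ of \eqref{NS sys abstract} which converges exponentially fast to some equilibrium $z$ in the topologies of assertions (i) and (ii).

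To complete the argument, I would invoke the characterization of the set of equilibria from Proposition~\ref{Prop: equilibrium set} together with the standing hypothesis $\cE_\alpha = \{0\}$: the limit $z$ belongs to the equilibrium set, which by assumption is $\{0\}$, so $z = 0$. This produces the desired conclusion with $z = 0$ and matches the convergence topologies of (i) and (ii) verbatim. There is no substantive obstacle here; the corollary is a direct specialization of the main stability theorem to a situation in which the equilibrium set is as small as possible. It is worth noting that, by Proposition~\ref{Prop: characterizing Ealpha}, the hypothesis $\cE_\alpha = \{0\}$ is automatic whenever $\alpha > 0$, and Remark~\ref{Rmk:Killing 2} records geometric conditions (e.g.\ $\Ric^\sharp < 0$ together with $L_\Sigma \ge 0$) that also secure it in the case $\alpha = 0$, so the corollary applies in all such situations without further work.
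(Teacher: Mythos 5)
Your proof is correct and is exactly the argument the paper intends (the paper states the corollary without a separate proof). You take $u_*=0\in\cE_\alpha$, apply Theorem~\ref{Thm: stability near killing 1} to obtain global existence and exponential convergence to some $z$ in the equilibrium set, and then use the hypothesis $\cE_\alpha=\{0\}$ to conclude $z=0$; the auxiliary remarks about $\Aw_0=\Aw_N$ and about when $\cE_\alpha=\{0\}$ holds are accurate and consistent with Proposition~\ref{Prop: characterizing Ealpha} and Remark~\ref{Rmk:Killing 2}.
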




\goodbreak

\appendix


\section{Tensor bundles and the Levi-Civita connection}\label{Appendix A}

Let $\sM$ be a compact, smooth,  and oriented $n$-dimensional Riemannian manifold with boundary $\Sigma=\partial\sM$
and let $(\cdot | \cdot)_g$ denote the Riemann metric on $\sM$.
We will use the same notation for the (induced) Riemann metric on $\Sigma$.

\medskip
Then $T{\sM}$ and $T^{\ast}{\sM}$ denote the tangent and the cotangent bundle of ${\sM}$, respectively,
and $T^{\sigma}_{\tau}{\sM}:=T{\sM}^{\otimes{\sigma}}\otimes{T^{\ast}{\sM}^{\otimes{\tau}}}$ stands for the $(\sigma,\tau)$-tensor bundle
of $\sM$ for $\sigma,\tau\in \bN$.
The notations $\Gamma(\sM ;T^{\sigma}_{\tau}{\sM})$ and $\mathcal{T}^{\sigma}_{\tau}{\sM}$ stand  for the set of all sections of   $T^{\sigma}_{\tau}{\sM}$ and the $C^{\infty}({\sM})$-module of all smooth sections of $T^{\sigma}_{\tau}\sM$, respectively.
For abbreviation, we put $\mathbb{J}^{\sigma}:=\{1,2,\ldots,n\}^{\sigma}$, and $\mathbb{J}^{\tau}$ is defined alike.

Given local coordinates $ \{x^1,\ldots,x^n\}$,
$$(i):=(i_1,\cdots,i_{\sigma})\in\bJ^{\sigma},\quad (j):=(j_1,\cdots,j_{\tau})\in\bJ^{\tau},$$
we set
\begin{align*}
\frac{\partial}{\partial{x}^{(i)}}:=\frac{\partial}{\partial{x^{i_1}}}\otimes\cdots\otimes\frac{\partial}{\partial{x^{i_{\sigma}}}}, \hspace*{.5em} dx^{(j)}:=dx^{j_1}\otimes{\cdots}\otimes{dx}^{j_{\tau}}.
\end{align*}
Suppose that $a\in \Gamma(\sM; T^\sigma_\tau \sM) $  is a $\bK$-valued, $\bK\in \{\bR,\bC\}$,  tensor bundle on $\sM$.
In this appendix, for notational brevity, we denote both $T^\sigma_\tau \sM$ and its complexification by $T^\sigma_\tau \sM$.
The local representation of $a$ with respect to these coordinates is given by
\begin{align*}
a=a^{(i)}_{(j)} \frac{\partial}{\partial{x}^{(i)}} \otimes dx^{(j)}, \hspace{1em}\text{ with } a^{(i)}_{(j)}: U_k \to \bK,
\end{align*}
where  $U_k\subset  \sM$  is a coordinate patch.

For $s\in \{1,\ldots, \sigma \}$,  $t\in \{1,\ldots, \tau\}$ and $a\in \Gamma(\sM; T^\sigma_\tau \sM)$,
${\sf C}^s_t (a)\in \Gamma(\sM; T^{\sigma-1}_{\tau-1} \sM)$ denotes the contraction of  $a$ with respect to the $(s,t)$-position.
This means that  in a local representation of $a$,
$$
a=a^{(i_1,\dots, i_s,\dots, i_\sigma)}_{(j_1,\dots, j_t,\dots, j_\tau)}
\frac{\partial}{\partial x^{i_1}}\otimes\cdots \otimes \frac{\partial}{\partial x^{i_s}}\otimes \cdots \otimes \frac{\partial}{\partial x^{i_\sigma}}
\otimes dx^{j_1}\otimes \cdots \otimes dx^{j_t}\otimes \cdots \otimes dx^{j_\tau},
$$
the terms $\frac{\partial}{\partial x^{i_s}}$ and $dx^{j_t}$ are deleted and
$
a^{(i_1,\dots, i_s,\dots, i_\sigma)}_{(j_1,\dots, j_t,\dots, j_\tau)}$ is replaced by $ a^{(i_1,\dots, k,\dots, i_\sigma)}_{(j_1,\dots, k,\dots, j_\tau)}$,
and the sum convention is used for $k$.

\medskip
Any
$S\in  \Gamma(\sM;T^1_1\sM)$ induces a linear map from $\Gamma(\sM;T\sM)$
to $\Gamma(\sM;T\sM)$ by virtue of
$$
S u=(S^i_j \frac{\partial}{\partial x^i}\otimes dx^j )u = S^i_j u^j  \frac{\partial}{\partial x^i},
\quad u=u^j \frac{\partial}{\partial x^j}\in\Gamma(\sM;T\sM).
$$
The dual $S^*$ of $S\in  \Gamma(\sM;T^1_1\sM)$  is a linear map from  $\Gamma(\sM;T^*\sM)$ to $\Gamma(\sM;T^*\sM)$,
defined by
$$
S^* \alpha=(S^i_j dx^j \otimes \frac{\partial}{\partial x^i})\alpha = S^i_j  \alpha_i dx^j,
 \quad \alpha  = \alpha_i dx^i\in \Gamma(\sM;T^*\sM).
$$
The adjoint $S^\sT$ of $S\in  \Gamma(\sM;T^1_1\sM)$
is the linear map from $\Gamma(\sM;T\sM)$ to $\Gamma(\sM;T\sM)$ defined by $S^{\sT} =\gs S^* \gf$, or more precisely,
\begin{equation}
\label{ST}
S^\sT u = \gs[ S^* (\gf u)], \quad u \in \Gamma (\sM; T\sM).
\end{equation}
It holds that $(Su|v)_g=(u|S^{\sf T}v)_g$ for  tangent fields $u,v$.
In local coordinates,
$
S^\sT = g^{i\ell} S^m_\ell g_{jm} \frac{\partial}{\partial x^i} \otimes dx^j.
$

\medskip\noindent
For $a\in\Gamma(\sM; T^\sigma_{\tau}\sM)$, $\tau\ge 1$, $a^\sharp \in\Gamma(\sM; T^{\sigma+1}_{\tau-1}\sM)$ is defined by
\begin{equation*}
a^\sharp := g^\sharp a:={\sf C}^{\sigma+2}_1(a \otimes   g^{*} ),
\end{equation*}
and for $a\in\Gamma(\sM; T^\sigma_{\tau}\sM)$, $\sigma\ge 1$,  $a_\flat \in\Gamma(\sM; T^{\sigma-1}_{\tau+1}\sM)$
is defined by
\begin{equation*}
a_\flat := g_\flat a:={\sf C}^{\sigma }_1( g\otimes  a  ).
\end{equation*}

Let $\nabla$ be the Levi-Civita connection on $\sM$.
For $u\in C^1(\sM; T\sM)$, the covariant derivative $\nabla u\in C(\sM; T^1_1\sM)$ is given in local coordinates by
$$
\nabla u= \nabla_j u\otimes dx^j= (\partial_j u^i +\Gamma^i_{jk}u^k)\frac{\partial}{\partial x^i}\otimes dx^j =:u^i_{|j}\frac{\partial}{\partial x^i}\otimes dx^j,
$$
where $u=u^i\frac{\partial}{\partial x^i}$,
$\nabla_j = \nabla_{\frac{\partial}{\partial x^j}}$,
 and $\Gamma^i_{jk}$ are the Christoffel symbols.
It follows that  $\nabla u + [\nabla u]^{\sf T} $ is given in local coordinates by
\begin{equation*}
\nabla u + [\nabla u]^{\sf T}= \big( u^i_{|j} + g^{i\ell} u^m_{|\ell} g_{jm}\big) \frac{\partial}{\partial x^i}\otimes dx^j
\end{equation*}
and
\begin{equation*}
(\nabla u + [\nabla u]^{\sf T})^\sharp =\left(g^{jk} u^i_{|k} + g^{ik} u^j_{|k}\right)\frac{\partial}{\partial x^i}\otimes \frac{\partial }{\partial x^j}.
\end{equation*}

The extension of the Levi-Civita connection on $C^1(\sM; T^{\sigma}_{\tau}{\sM})$  is again denoted by $\nabla:=\nabla_g$.
For $a\in C^1(\sM; T^{\sigma}_\tau \sM)$, $\nabla a \in C(\sM; T^{\sigma}_{\tau+1}\sM)$ is given in local coordinates by
$\nabla a = \nabla_j a \otimes dx^j$,  and
$$
\dv : C^1(\sM; T^{\sigma }_\tau \sM) \to C(\sM; T^{\sigma -1}_\tau \sM), \quad \sigma\ge 1,
$$
is the divergence operator, defined by
$
\dv a = {\sf C}^{\sigma}_{\tau +1}(\nabla a).
$
In particular,
$$
\dv u= u^i_{|i} \quad \text{for}\quad u=u^i \frac{\partial}{\partial x^i},\qquad
\dv S = S^{ik}_{|k} \frac{\partial} {\partial x^i}\quad\text{for}\quad S=S^{ij} \frac{\partial} {\partial x^i} \otimes  \frac{\partial} {\partial x^j}.
$$
For a scalar function $\phi \in C^1(\sM; \bK)$, the gradient vector $\gd\phi\in C(\sM; T\sM)$ is defined by the relation
$$
(\gd \phi | u)_g := \la \nabla\phi , u\ra_g= \nabla_u \phi , \quad u\in C(\sM; T\sM),
$$
where $\nabla\phi\in C(\sM; T^\ast \sM)$ is the covariant derivative of $\phi$. In local coordinates, we have
$$
(\gd\phi)^i = g^{ij} \partial_j \phi ,\quad 1\le i \le n.
$$

For the curvature tensor $R(u,v)w:= [\nabla_u, \nabla_v]w - \nabla_{[u,v]}w$, with $u,v,w\in \Gamma(\sM; T\sM)$,
we use the convention (as in \cite{Petersenbook, SaTu20}, for instance)
$$
R\left(\frac{\partial}{\partial x^i}, \frac{\partial}{\partial x^j}\right) \frac{\partial}{\partial x^k} = R^\ell_{ijk } \frac{\partial}{\partial x^\ell}.
$$
The Ricci tensor $\Ric\in T^0_2\sM$ is then defined by
$\Ric_{jk}=R^i_{ijk}.$

\medskip
The generalized metric $g^{\tau}_{\sigma}$ on $T^{\sigma}_{\tau}{\sM}$ is still written as $(\cdot|\cdot)_{g}$.
In addition,
\begin{align*}
|\cdot|_g:C^\infty(\sM; T^{\sigma}_{\tau}{\sM})\rightarrow{C^{\infty}}({\sM}),\hspace*{.5em} a\mapsto\sqrt{(a|a)_g}
\end{align*}
is called the (vector bundle) \emph{norm} induced by $g$.

\section{Some analysis on manifolds}\label{Appendix B}

\begin{lemma}
\label{Appendix Lem: divergence thm}  Let $1<q<\infty$.
\phantom{new line}
\begin{enumerate}
\item[]
\vspace{-4mm}
\item[{\rm (a)}]
Suppose that $u\in H^1_q(\sM; T \sM)$ and $\phi\in H^1_{q'}(\sM)$. Then
\begin{equation}
\label{div-scalar}
\begin{aligned}
 \int_{\sM} (\dv u)\phi \,d\mu_g &= -\int_{\sM} (u | \gd\phi)_g \, d\mu_g + \int_\Sigma ( u | \nu_\Sigma  )_g \phi\,  d\sigma_g   \\
& = - \int_{\sM} \nabla_u \phi\,d\mu_g  +  \int_\Sigma ( u | \nu_\Sigma  )_g \phi\,  d\sigma_g,
\end{aligned}
\end{equation}
where  $\mu_g$ ($\sigma_g$, respectively) is the volume element induced by $g$ or $g|_{\Sigma}$, respectively.
\vspace{2mm}
\item[{\rm (b)}] (Green's first identity).
Suppose  that  $S\in H^1_q(\sM; T^2_0\sM)$    and $v\in H^1_{q'}(\sM; T \sM)$.
Then
\begin{equation}
\label{Green-first}
( \dv S | v )_\sM = - ( S_\flat | \nabla v )_\sM + (  S_\flat \nu_\Sigma | v )_\Sigma.
\end{equation}

In particular,
\begin{enumerate}
\vspace{2mm}
\item[{\rm (i)}]  $((\Delta_\sM + \Ric^\sharp)u | v)_\sM = - 2(D_u | D_v)_\sM + 2(D_u \nu_\Sigma | v)_\Sigma$, with
$$ (u,v)\in H^2_{q,\sigma} (\sM; T\sM) \times  H^1_{q'}(\sM; T\sM) ;$$
\item[{\rm (ii)}]  $(\Delta_\sM u | v)_{\sM} = - (\nabla u | \nabla v)_{\sM} + (\nabla u\, \nu_\Sigma | v)_\Sigma$, with
$$ (u,v)\in H^2_q(\sM; T\sM) \times H^1_{q'}(\sM; T\sM) ;$$
\item[{\rm (iii)}]  $(\dv (u\otimes u) | v)_{\sM} =  - (u\otimes u_\flat | \nabla v)_{\sM} + (u |v)_\Sigma \,(u | \nu_\Sigma)_\Sigma$, with
$$(u,v)\in \left( H^1_q(\sM; T\sM)\cap L_\infty (\sM; T\sM) \right) \times  H^1_{q'}(\sM; T\sM) ,$$
where $D_u = \frac{1}{2} (\nabla u + [\nabla u]^{\sf T})$ and $D_v = \frac{1}{2} (\nabla v + [\nabla v]^{\sf T})$.
\end{enumerate}

\end{enumerate}
\end{lemma}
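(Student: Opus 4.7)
The plan is to prove part (a) via the standard divergence theorem combined with a product rule, then derive (b) by applying (a) to a suitably chosen auxiliary vector field, and finally specialize (b) to obtain (i), (ii), and (iii).

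For (a), I will begin with smooth $u$ and $\phi$. The Leibniz rule
\[
\dv(\phi u) = \phi\, \dv u + (\gd\phi \,|\, u)_g = \phi\, \dv u + \nabla_u \phi
\]
is a direct consequence of metric compatibility of the Levi-Civita connection. Applying the classical divergence theorem on $(\sM,\Sigma)$ to the vector field $\phi u$ gives $\int_\sM \dv(\phi u)\, d\mu_g = \int_\Sigma \phi\, (u \,|\, \nu_\Sigma)_g\, d\sigma_g$; rearranging yields \eqref{div-scalar}. The extension from smooth sections to $u \in H^1_q(\sM;T\sM)$ and $\phi \in H^1_{q'}(\sM)$ follows by density together with continuity of the boundary trace.

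For (b), I will reduce to (a) by testing against the auxiliary vector field defined in local coordinates by $w^j := S^{ij} v_i$, where $v_i = g_{ik} v^k$. A short computation using $\nabla g = 0$ and the paper's convention $\dv S = S^{ij}_{|j}\partial_i$ yields
\[
\dv w = S^{ij}_{|j}\, v_i + S^{ij}\, v_{i|j} = (\dv S \,|\, v)_g + (S_\flat \,|\, \nabla v)_g,
\]
while a pointwise index computation gives $(w \,|\, \nu_\Sigma)_g = (S_\flat\, \nu_\Sigma \,|\, v)_g$ on $\Sigma$. Applying (a) with $\phi \equiv 1$ to $w$ then produces \eqref{Green-first}, and density plus trace continuity handle the Sobolev regularity.

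The three special cases (i)--(iii) follow by specializing $S$ and invoking identities from Appendix A. For (i), take $S := D(u)$: since $u \in H^2_{q,\sigma}$, identity \eqref{div-D-local} delivers $2\, \dv D(u) = \Delta_\sM u + \Ric^\sharp u$, and symmetry of $D_u$ lets us replace $(D_u \,|\, \nabla v)_g$ by $(D_u \,|\, D_v)_g$ since the antisymmetric part of $\nabla v$ pairs to zero with $D_u$; multiplying (b) by $2$ gives (i). For (ii), take $S := (\nabla u)^\sharp$: a local check using \eqref{Bochner-Ricci-local} yields $\dv S = \Delta_\sM u$, and $S_\flat = \nabla u$ by raising and then lowering the same index, so (b) is exactly (ii). For (iii), take $S := u \otimes u$: the assumption $u \in H^1_q \cap L_\infty$ guarantees $\nabla(u \otimes u) = u \otimes \nabla u + \nabla u \otimes u \in L_q$, whence $\dv S \in L_q$; direct computation gives $S_\flat = u \otimes u_\flat$ and $S_\flat\, \nu_\Sigma = (u \,|\, \nu_\Sigma)_g\, u$, so (b) reproduces (iii).

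The only delicate point will be the index bookkeeping in (b). With the paper's convention $\dv S = S^{ij}_{|j}\partial_i$, the auxiliary vector field must be chosen as $w^j = S^{ij} v_i$ rather than the deceptively symmetric $w^i = S^{ij} v_j$; the two choices agree only when $S$ is symmetric, a distinction that matters for (ii) where $(\nabla u)^\sharp$ is not symmetric in general. Beyond this careful choice the argument amounts to a sequence of local-coordinate verifications, and the Sobolev-regularity extensions are routine density arguments.
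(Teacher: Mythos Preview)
Your proposal is correct and follows essentially the same approach as the paper. In particular, your auxiliary vector field $w^j = S^{ij}v_i$ is exactly the paper's $S_\flat^{\sf T} v$ written in components (the paper computes $(S_\flat^{\sf T} v)^i = g_{jk}S^{ji}v^k = S^{ji}v_j$, which is your $w$ after renaming indices), and the specializations for (i)--(iii) using $S = 2D(u)$, $S = (\nabla u)^\sharp$, and $S = u\otimes u$ match the paper's choices verbatim.
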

\begin{proof}
(a)
We first consider the case $u\in C^1(\sM; T \sM)$ and $\phi\in C^1(\sM)$.
The assertion follows from
$$
\dv (u\phi) = (\dv u)\phi + (u | \gd\phi)_g=(\dv u)\phi + \nabla_u \phi
$$
and  the divergence theorem on manifolds with boundary,
cf. \cite[Theorem~16.32]{LeeBook}.
In view of the fact that $\dv\in \cL(H^1_q(\sM;T\sM), L_q(\sM;T\sM))$, the assertion follows by a density argument.

\medskip\noindent
(b) As in Part (a), it suffices to prove the assertion for   $S\in C^1(\sM; T^2_0\sM)$
and $v\in C^1(\sM; T \sM)$.
Then we have in local coordinates
$$
S=S^{ij} \frac{\partial}{\partial x^i} \otimes \frac{\partial}{\partial x^j},\quad  v=v^i \frac{\partial}{\partial x^i}.
$$
One readily verifies that
$$
S_\flat^{\sf T} = g_{jk} S^{ji} \frac{\partial }{\partial x^i}\otimes dx^k,\quad
S_\flat^{\sf T} v= g_{jk} S^{ji} v^k\frac{\partial }{\partial x^i}.
$$
Direct computations show that in local coordinates
\begin{align*}
\dv(S_\flat^{\sf T}v) =  (g_{jk} S^{ji} v^k)_{|i} = (g_{jk} S^{ji})_{ | i} v^k  +  g_{jk} S^{ji} v^k_{|i}
= (\dv S | v)_g + (S_\flat | \nabla v)_g.
\end{align*}
By the divergence theorem on manifolds with boundary, cf. \cite[Theorem~16.32]{LeeBook},
\begin{align*}
\int_\sM \dv(S_\flat^{\sf T}v )\, d\mu_g= ( S_\flat^{\sf T}v | \nu_\Sigma )_\Sigma 	= (  S_\flat \nu_\Sigma | v )_\Sigma .
\end{align*}
Hence \eqref{Green-first} holds.

\medskip\noindent
The assertion in (i) then follows by choosing $S= 2D(u)$ and noting that $\dv 2D(u) = \Delta_\sM u +\Ric^\sharp u$   as $\dv u=0$,   see \eqref{div-D-local},
and
$$
(S_\flat | \nabla v)_g = (\nabla u + [\nabla u]^{\sf T} | \nabla v)_g= 2 (D_u | D_v)_g.
$$
 (ii) follows by choosing $S= (\nabla u)^\sharp$ and noting that $S_\flat =\nabla u$ and $\dv (\nabla u)^\sharp = \Delta_\sM u$.
Finally, the assertion in (iii) follows immediately by choosing $S= u\otimes u$.
\end{proof}

\begin{lemma} Suppose $\phi \in H^3_q(\sM)$. Then
\label{lem: commuator-Ricci}
$$
\Delta_\sM\, \gd \phi=\gd \Delta_B \phi +\Ric^\sharp \gd \phi.
$$
\end{lemma}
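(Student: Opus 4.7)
My plan is to verify this Weitzenb\"ock-type identity by a direct computation in local coordinates, using only the symmetry of the Hessian of a scalar, the Ricci identity for a $1$-form, and the first Bianchi identity.

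First I would express both sides in coordinates. Because $\nabla g = 0$ and $\phi$ is scalar, $(\gd\phi)^i = g^{ij}\nabla_j\phi$ and $\nabla_m(\gd\phi)^i = g^{ij}\nabla_m\nabla_j\phi$, so
\begin{equation*}
(\Delta_\sM \gd\phi)^i = g^{km}g^{ij}\nabla_k\nabla_m\nabla_j\phi,\qquad
(\gd\Delta_B\phi)^i = g^{ij}g^{km}\nabla_j\nabla_k\nabla_m\phi.
\end{equation*}
Symmetry of the Hessian, $\nabla_m\nabla_j\phi = \nabla_j\nabla_m\phi$, allows the difference to be rewritten as $g^{ij}g^{km}[\nabla_k,\nabla_j](\nabla_m\phi)$. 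The Ricci identity for a covector (which, in the paper's convention $R(\partial_k,\partial_j)\partial_m = R^s_{\,kjm}\partial_s$, reads $[\nabla_k,\nabla_j]\alpha_m = -R^s_{\,kjm}\alpha_s$ and follows from $[\nabla_k,\nabla_j]X^s = R^s_{\,kjm}X^m$ by applying the Leibniz rule to the scalar $\alpha_m X^m$) then yields
\begin{equation*}
(\Delta_\sM\gd\phi)^i - (\gd\Delta_B\phi)^i = -g^{ij}g^{km}R^s_{\,kjm}\nabla_s\phi.
\end{equation*}

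The main step, which is the sole bookkeeping obstacle, is to identify $g^{km}R^s_{\,kjm}$ with $-\Ric^s_{\,j}$. After lowering $s$ via the metric, the first Bianchi identity
\begin{equation*}
R_{kjm\beta} + R_{km\beta j} + R_{k\beta jm} = 0
\end{equation*}
gives $g^{km}R_{kjm\beta} = -g^{km}R_{km\beta j} - g^{km}R_{k\beta jm}$. The first term on the right vanishes by antisymmetry of $R$ in its first pair of indices (and symmetry of $g^{km}$), while the second equals $\Ric_{\beta j}$ by the definition $\Ric_{jk} = R^i_{\,ijk}$. Raising back and using the symmetry of $\Ric$ produces
\begin{equation*}
(\Delta_\sM\gd\phi)^i - (\gd\Delta_B\phi)^i = g^{ij}\Ric^s_{\,j}\nabla_s\phi = (\Ric^\sharp\gd\phi)^i,
\end{equation*}
which is the stated identity for smooth $\phi$. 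A standard density argument then extends it to all $\phi\in H^3_q(\sM)$, since every operation involved is continuous in that topology.
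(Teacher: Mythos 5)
Your proposal is correct and follows essentially the same coordinate computation as the paper: express $\Delta_\sM\gd\phi$ and $\gd\Delta_B\phi$ in local coordinates, commute $g$ through the covariant derivatives (using $\nabla g=0$), invoke the symmetry of the Hessian of the scalar $\phi$ to reduce the difference to a curvature commutator, and then identify that commutator with a Ricci term. The one genuine, though minor, difference is in how the commutator is matched with $\Ric$. The paper raises the index first, so that after the Hessian step one faces $g^{kl}u^j_{|l|j}$ with $u=\gd\phi$; applying the Ricci identity to the \emph{vector field} $u$ and contracting the output index with the differentiation index gives $u^j_{|l|j}-u^j_{|j|l}=R^j_{jlm}u^m$, which is $\Ric_{lm}u^m$ directly by the defining trace $\Ric_{lm}=R^i_{ilm}$; no curvature symmetries beyond that are needed. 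You instead keep the \emph{covector} $\nabla_m\phi$, apply the Ricci identity to it, and arrive at the contraction $g^{km}R^s_{kjm}$, which is not the defining trace of $\Ric$; you then need the first Bianchi identity (together with skew symmetry in the first index pair) to recognize $g^{km}R^s_{kjm}=-\Ric^s_j$. Both routes are valid (I have checked your Bianchi manipulation and the sign in the covector Ricci identity against the paper's curvature conventions), but the paper's choice of where to raise the index is slightly slicker because the Ricci tensor then appears immediately by definition rather than after an additional symmetry argument.
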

\begin{proof}
Let $u=u^k \frac{\partial }{\partial x^k}$. Then
$\Delta_\sM u = g^{ij} u^k_{| i\, | j}  \frac{\partial }{\partial x^k}$ and hence, $(\Delta_\sM u)^k = g^{ij} u^k_{| i\, | j}$.

\smallskip\noindent
In case $u=\gd \phi =g^{kl}\phi_{| l}  \frac{\partial }{\partial x^k}$ we obtain,
employing the property  that $g^{kl}_{|i}=0$ for all $1\le i,k,l\le n$,
\begin{equation*}
\begin{aligned}
(\Delta_\sM \,\gd\phi )^k &=g^{ij} (g^{kl } \phi_{ | l})_{| i\, | j} = g^{ij} g^{kl }(\phi_{ | l})_{| i\, | j}
   = g^{ij} g^{kl } (\phi_{|l \, |i})_{| j} =  g^{ij} g^{kl } (\phi_{|i \, |l})_{| j}  \\
 & =  g^{ij} g^{kl } (\phi_{|i})_{|l \, | j} = g^{kl } (g^{ij} (\phi_{|i})_{|l \, | j}
 =  g^{kl } u^j_{|l \, | j}
  = g^{kl } u^j_{|j\, | l } + g^{kl } ( u^j_{|l \, | j} -u^j_{|j\, | l }  ) \\
 & = (\gd \dv u)^k  +   g^{kl }\Ric_{ lm}u^m   = ( \gd \dv u + \Ric^\sharp u )^k \\
 & = (\gd \Delta_B \phi + \Ric^\sharp\,\gd \phi )^k,
 \end{aligned}
\end{equation*}
where we used  the fact that
$\phi_{| i\,| j} = \partial_j\partial_i \phi - \Gamma^k_{ji }\partial_k \phi = \phi_{| j\,| i} $ for scalar functions.
\end{proof}

\begin{lemma}[Korn's inequality]
\label{Appendix Lem: korn}
There exists some constant $C>0$ such that
\begin{equation}
\label{korn ineq}
\|u\|_{H^1_2(\sM)} \leq C \| D_u\|_{L_2(\sM)} ,\quad u\in  V^1_2,
\end{equation}
 where $V^1_2$ is defined in \eqref{E-orth}.
In particular, if
\begin{itemize}
\item[{\em (i)}]   $\alpha>0$, or
\item[{\em (ii)}] $\Ric^\sharp < 0$, $L_\Sigma \geq 0$ and $\alpha=0$, or
\item[{\em (iii)}] $\Ric^\sharp \leq  0$, $L_\Sigma> 0$ and $\alpha=0$,
\end{itemize}
then \eqref{korn ineq} holds for all $u\in H^1_{2,\sigma}(\sM;T\sM)$.
\end{lemma}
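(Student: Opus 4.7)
The plan is to establish a Korn-type integral identity on $H^1_{2,\sigma}(\sM;T\sM)$, deduce from it an estimate modulo a lower-order $L_2$-remainder, and then eliminate that remainder on $V^1_2$ by a standard compactness argument; the three additional cases follow by identifying when the constraint $u\perp_{L_2}\cE_\alpha$ forces $V^1_2$ to be the whole space.

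For any smooth $u$ with $\dv u=0$ and $(u|\nu_\Sigma)_g=0$, I would apply both parts of Lemma~\ref{Appendix Lem: divergence thm}(b) with $v=u$: part (i) gives $((\Delta_\sM+\Ric^\sharp)u | u)_\sM = -2\|D_u\|^2_{L_2(\sM)} + 2(D_u\nu_\Sigma | u)_\Sigma$ and part (ii) gives $(\Delta_\sM u | u)_\sM = -\|\nabla u\|^2_{L_2(\sM)} + (\nabla u\,\nu_\Sigma | u)_\Sigma$. Subtracting the two equations and invoking $\cP_\Sigma([\nabla u]^\sT\nu_\Sigma) = L_\Sigma u$ from~\eqref{boundary condition 1} (valid precisely because $u$ is tangent to $\Sigma$) to identify the residual boundary contribution, I obtain the Korn-type identity
\begin{equation*}
2\|D_u\|^2_{L_2(\sM)} = \|\nabla u\|^2_{L_2(\sM)} - (\Ric^\sharp u | u)_\sM + (L_\Sigma u | u)_\Sigma.
\end{equation*}
By density this identity extends to all of $H^1_{2,\sigma}(\sM;T\sM)$. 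Since $\Ric^\sharp$ and $L_\Sigma$ are bounded on the compact manifolds $\sM,\Sigma$, it yields $\|\nabla u\|^2_{L_2(\sM)} \le 2\|D_u\|^2_{L_2(\sM)} + C(\|u\|^2_{L_2(\sM)} + \|u\|^2_{L_2(\Sigma)})$, and the trace inequality combined with Ehrling's lemma (applied to the compact embedding of the trace $H^1_2(\sM)\to L_2(\Sigma)$) absorbs the boundary contribution to give the preliminary bound $\|u\|^2_{H^1_2(\sM)} \le C(\|D_u\|^2_{L_2(\sM)} + \|u\|^2_{L_2(\sM)})$.

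To obtain \eqref{korn ineq} on $V^1_2$, I would argue by contradiction. If it fails, there is $u_n\in V^1_2$ with $\|u_n\|_{H^1_2}=1$ and $\|D_{u_n}\|_{L_2}\to 0$. By Rellich's theorem a subsequence converges in $L_2$, and the preliminary bound then upgrades this to convergence in $H^1_2$ to a limit $u$ with $\|u\|_{H^1_2}=1$, $D_u=0$, $\dv u=0$, $(u|\nu_\Sigma)_g=0$, hence $u\in\cE_0$. Since $V^1_2$ is closed and of finite codimension in $H^1_{2,\sigma}(\sM;T\sM)$, $u\in V^1_2$, i.e.\ $u\perp_{L_2}\cE_\alpha$. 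When $\alpha=0$ one has $\cE_\alpha=\cE_0$, so $u\in\cE_0\cap\cE_0^\perp=\{0\}$, contradicting $\|u\|_{H^1_2}=1$.

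For the special cases, (ii) and (iii) reduce to proving $\cE_0=\{0\}$, whence $V^1_2=H^1_{2,\sigma}(\sM;T\sM)$ and the base statement applies: applying the identity to $u\in\cE_0$ (so $D_u=0$) produces $\|\nabla u\|^2_{L_2(\sM)} + (L_\Sigma u | u)_\Sigma = (\Ric^\sharp u | u)_\sM$; under (ii) the right side is $\leq 0$ and strictly negative unless $u\equiv 0$, while the left side is $\geq 0$, forcing $u\equiv 0$; under (iii) both left-hand terms must vanish, $L_\Sigma>0$ forces $u|_\Sigma=0$, and the Killing-field rigidity argument from the proof of Proposition~\ref{Prop: characterizing Ealpha} then yields $u\equiv 0$. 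Case (i), $\alpha>0$, is the main technical subtlety: although $\cE_\alpha=\{0\}$, the space $\cE_0$ may remain non-trivial (e.g.\ rotations on a disk), so the natural proof relies on the strengthened Korn inequality $\|u\|^2_{H^1_2(\sM)}\le C(\|D_u\|^2_{L_2(\sM)} + \|u\|^2_{L_2(\Sigma)})$, which the same contradiction argument delivers, since any limit now satisfies both $D_u=0$ and $u|_\Sigma=0$, hence lies in $\cE_\alpha=\{0\}$. Reconciling this strengthening with the literal form of \eqref{korn ineq} is the point that will require the most care.
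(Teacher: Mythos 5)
Your proof tracks the paper's own argument closely: you derive the same Korn-type identity from Lemma~\ref{Appendix Lem: divergence thm}(b)(i),(ii) and \eqref{boundary condition 1}, deduce the preliminary bound by absorbing the boundary trace term via interpolation (the paper uses a Young-type trace estimate; Ehrling's lemma amounts to the same thing), run the same compactness/contradiction argument, and reduce cases~(ii),(iii) to $\cE_0=\{0\}$ by precisely the same sign analysis. Up to these cosmetic differences the two proofs are the same.

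Your worry about case~(i) is not a misreading; it is a genuine issue, and you have put your finger on the exact place it arises. When $\alpha>0$, Proposition~\ref{Prop: characterizing Ealpha} gives $\cE_\alpha=\{0\}$, hence $V^1_2=H^1_{2,\sigma}(\sM;T\sM)$, and the lemma would then assert $\|u\|_{H^1_2}\le C\|D_u\|_{L_2}$ for \emph{every} divergence-free tangent field. This fails as soon as $\cE_0$ is non-trivial: on a disk (or $\bS^2_+$) with $\alpha>0$, the rotation field lies in $H^1_{2,\sigma}$, has $D_u=0$, but is non-zero. The paper's contradiction argument contains the matching non sequitur: from $u_n\to u$ in $H^1$ with $\|D_{u_n}\|\to 0$ one only obtains $D_u=0$ and $u\in H^1_{2,\sigma}$, i.e.\ $u\in\cE_0$; the conclusion ``consequently $u\in\cE_\alpha$'' requires the extra information $u|_\Sigma=0$, which is not available when $\alpha>0$. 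The estimate that is actually true for $\alpha>0$ and all $u\in H^1_{2,\sigma}$, and the one that is actually used in \eqref{integral ineq 2}, is the strengthened form
\begin{equation*}
\|u\|_{H^1_2(\sM)}^2 \le C\bigl(\|D_u\|_{L_2(\sM)}^2 + \|u\|_{L_2(\Sigma)}^2\bigr),
\end{equation*}
which follows from your same contradiction argument because the limit then satisfies both $D_u=0$ and $u|_\Sigma=0$, so it lies in $\cE_\alpha=\{0\}$. An equivalent fix is to keep the literal form of \eqref{korn ineq} but take $V^1_2$ to be $\cE_0^\perp$ rather than $\cE_\alpha^\perp$ for \emph{all} $\alpha\ge 0$. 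Either way, the applications in Section~\ref{Section:large time behavior} go through unchanged; it is the bare statement of case~(i) that needs the boundary term.

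Two minor clean-up remarks. First, before quoting Green's formulas with $v=u$ you need $u\in H^2_{2,\sigma}$; the extension to $H^1_{2,\sigma}$ is by density, which you note, but the statement you extend should be the resulting \emph{identity} (both sides $H^1$-continuous), not the Green formula itself. Second, in case~(iii) your appeal to ``Killing-field rigidity'' is slightly stronger than needed: you already have $\nabla u=0$ on all of $\sM$, so $|u|_g$ is constant and vanishes on $\Sigma$, giving $u\equiv 0$ directly without invoking the boundary unique continuation argument of Proposition~\ref{Prop: characterizing Ealpha}.
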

\begin{proof}
By combining  assertions (i) and (ii) of Lemma~\ref {Appendix Lem: divergence thm}(b),
employing~\eqref{boundary condition 1},  and using compactness of $\sM$, we obtain
 \begin{equation}
 \label{Korn-estimate}
 \begin{aligned}
2\|D_u \|_{ L_2(\sM) }^2
&= -(\Delta_M u| u)_\sM - ( \Ric^\sharp u | u)_{\sM}  +  ((\nabla u + [\nabla u]^\sT) \nu_\Sigma  | u)_{ \Sigma }\\
& =  \| \nabla u \|_{ L_2(\sM) }^2 -( \Ric^\sharp u | u)_{\sM} +  ((\nabla u + [\nabla u]^\sT) \nu_\Sigma  | u)_{ \Sigma }- (\nabla u \nu_\Sigma | u)_\Sigma\\
& =  \| \nabla u \|_{ L_2(\sM) }^2 -( \Ric^\sharp u | u)_{\sM} +  ( \cP_\Sigma ([\nabla u]^\sT \nu_\Sigma ) | u)_{ \Sigma }\\
& =  \| \nabla u \|_{ L_2(\sM) }^2 -( \Ric^\sharp u | u)_{\sM} + (L_\Sigma u | u)_{ \Sigma } \\
&\ge  \| \nabla u \|_{ L_2(\sM) }^2 - c_1 (\| u \|_{ L_2(\sM) } + \| u \|_{L_2(\Sigma)})
\end{aligned}
\end{equation}
for some constant $c_1$.
Hence,
\begin{equation}
\label{korn ineq 2}
\|u\|_{H^1_2(\sM)} \leq C \left( \| D_u\|_{ L_2(\sM) }  +  \| u\|_{L_2(\sM)}  +  \| u\|_{  L_2(\Sigma) }    \right)
\end{equation}
for some constant $C$.
By trace theory, interpolation theory, see for instance \cite[Theorem 10.1]{Ama13}, and Young's inequality,
we conclude that for every $\varepsilon >0$ there exists a constant $C(\varepsilon)>0$ such that
\begin{equation*}
\| u \|_{L_2(\Sigma)} \le \varepsilon \|u\|_{H^1_2(\sM)} + C(\varepsilon) \| u \|_{L_2(\sM)}.
\end{equation*}
Inequality \eqref{korn ineq 2} then becomes
\begin{equation}
\label{korn ineq 3}
\|u\|_{H^1_2(\sM)} \leq C \left( \|  D_u\|_{L_2(\sM)}  +  \| u\|_{L_2(\sM)}   \right)
\end{equation}
with a (possibly different) constant $C$.

The assertion in \eqref{korn ineq} then follows by a contradiction argument.
Suppose \eqref{korn ineq} does not hold.
Then there exists a sequence
$\{u_n\}_{n=1}^\infty  \subset V^1_2$ such that $\|u_n\|_{H^1_2(\sM)}=1$ and
$$
 \| D_{u_n}\|_{  L_2(\sM) }   \to 0, \quad\text{as  $n\to \infty$}.
$$
Since $V^1_2$ is a closed subspace of $H^1_2(\sM;T\sM)$,
there exist a subsequence of $\{u_n\}_{n=1}^\infty $, not relabelled, and some $u\in V^1_2$ such that $u_n\to u$ in $L_{2,\sigma}(\sM;T\sM)$ and $u_n \rightharpoonup u$ in $H^1_{2,\sigma}(\sM;T\sM)$.
It follows from \eqref{korn ineq 3} that $\{u_n\}_{n=1}^\infty $ is Cauchy in $V^1_2$ and thus, $u_n \to u$ in $V^1_2$.
We can now infer that  $\| D_{u_n}- D_u\|_{ L_2(\sM)  }  \to 0$  as $n\to \infty$, and consequently,  $u\in \cE_\alpha$.
Therefore, $u\in \cE_\alpha \cap V^1_2=\{0\}$.  However, this contradicts the assumption that $\|u \|_{H^1_2(\sM)}=1$.
This completes the proof for \eqref{korn ineq}.

\medskip\noindent
Let us consider the set $\cE_\alpha$ under conditions (i)-(iii). When $\alpha>0$, it follows from Proposition~\ref{Prop: characterizing Ealpha} that  $\cE_\alpha=\{0\}$.
Now we consider the case $\alpha=0$.
Let $u\in \cE_0$ be given.  Then $D_u=0$.
By the computations in \eqref{Korn-estimate}, we have
\begin{align*}
0= 2\|D_u \|_{ L_2(\sM) }^2
=  \| \nabla u \|_{ L_2(\sM) }^2 -( \Ric^\sharp u | u)_{\sM} + (L_\Sigma u | u)_{ \Sigma }.
\end{align*}

This shows that under assumptions (ii) or (iii), $u=0$, and hence $\cE_0=\{0\}$.
Therefore, in all three cases, we have $V^1_2=H^1_{2,\sigma}(\sM; T\sM)$.
\end{proof}
\begin{remark}
\label{Appendix Rem: korn-no-divergence}
\mbox{}\\
 (a)
In the Euclidean case, Korn's inequality for Navier boundary conditions was first
proved in \cite[Lemma 4]{SoSc73}.

\smallskip\noindent
(b)
The estimate  \eqref{korn ineq 2} remains valid for all $u\in H^1_2(\sM; T\sM)$ satisfying $(u | \nu_\Sigma)_g=0$,
that is, without assuming that $\dv u=0$.
Indeed, in this case, the assertion of  Lemma~\ref{Appendix Lem: divergence thm}(b)(i)   reads
$$
(\Delta_\sM u +  \Ric^\sharp u + \gd\dv u | u)_\sM = - 2(D_u | D_u)_\sM + 2(D_u \nu_\Sigma | u)_\Sigma, \quad u\in H^2_2(\sM; T\sM).
$$
Using the relation
$\dv ((\dv u)u) = (\dv u)^2 + (\gd\dv u | u)_g $ and the assumption $(u|\nu_\Sigma)_g=0$, we obtain by analogous arguments as above
\begin{equation*}
\begin{aligned}
2\|D_u \|_{ L_2(\sM) }^2
& =  \| \nabla u \|_{ L_2(\sM) }^2 -( \Ric^\sharp u | u)_{\sM} + (L_\Sigma u | u)_{ \Sigma } + \| \dv u\|^2_{L_2(\sM)}\\
&\ge  \| \nabla u \|_{ L_2(\sM) }^2 - c_1 (\| u \|_{ L_2(\sM) } + \| u \|_{L_2(\Sigma)}).
\end{aligned}
\end{equation*}
Hence the assertion \eqref{korn ineq 2} follows by a density argument.
\end{remark}
In order to construct the Helmholtz projection on $(\sM,g)$, we will need the following   lemma,
where we use the definition
$$
 H^{-1}_q(\sM) := \left( H^1_{q'}(\sM) \right)'  \quad \text{and} \quad W^{-1/q}_q(\Sigma):= \left(W^{ 1/q}_{q'}(\Sigma) \right)' ,
\quad 1/q+ 1/q'=1.
$$
 We note  that our definition of  $H^{-1}_q(\sM)$ differs from the usual definition used in the literature.
This abuse of notation allows for a more streamlined presentation of the results in the following two Lemmas.

\begin{lemma}
\label{Appendix Lemma Poisson}
Let $q\in (1,\infty)$ and $k\in \{-1,0,1\}$.
Then the  Poisson problem
\begin{equation}
\label{Poisson}
\left\{\begin{aligned}
\Delta_B \phi   &= f &&\text{on}&&\sM ,\\
(\gd \phi   | \nu_\Sigma)_g &=h &&\text{on}&&\Sigma
\end{aligned}\right.
\end{equation}
has a unique (up to a constant)  solution $\phi \in H^{k+2}_q(\sM)$ for each
$f\in H^k_q(\sM)$
and $h\in W^{k+1-1/q}_q(\Sigma)$ satisfying
the solvability condition
\begin{equation}
\label{solvability cond}
 \la f | 1 \ra_\sM =\la h | 1 \ra_\Sigma.
\end{equation}
Furthermore,
\begin{equation}
\label{gradient estimate}
\|\gd \phi \|_{H^{k+1}_q(\sM)}  \leq C \left( \| f\|_{H^k_q(\sM)} + \|h\|_{W^{k+1-1/q}_q(\Sigma)}   \right)
\end{equation}
for some constant $C>0$.
In case $k=-1$, equation \eqref{Poisson} is interpreted as
$$
 (\gd \phi | \gd v)_\sM =  \la \cM h  -f | v\ra_\sM,   \quad v \in H^1_{q'}(\sM),
$$
where $\cM\in \cL( W^{-1/q}_q(\Sigma), H^{-1}_q(\sM))$ is the dual of the trace operator ${\rm tr}_\Sigma \in  \cL(H^1_{q'}(\sM), W^{1-1/q'}_{q'} (\Sigma))$.
\end{lemma}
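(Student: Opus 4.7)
I would proceed in three stages indexed by $k = -1, 0, 1$: first establish necessity of the solvability condition and uniqueness modulo constants, then construct the weak solution for $k = -1$ via a shift-and-Fredholm scheme, and finally bootstrap to $k = 0, 1$ by localization combined with standard elliptic regularity on model domains.

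\textbf{Necessity and uniqueness.} Testing $\Delta_B \phi = f$ against $1$ and invoking Lemma~\ref{Appendix Lem: divergence thm}(a) applied to $\gd\phi$ yields
\begin{equation*}
\la f | 1 \ra_\sM = \int_\sM \dv(\gd\phi)\, d\mu_g = \int_\Sigma (\gd\phi | \nu_\Sigma)_g\, d\sigma_g = \la h | 1\ra_\Sigma,
\end{equation*}
so \eqref{solvability cond} is necessary. For uniqueness, if $f = 0$ and $h = 0$, integration by parts gives $\|\gd\phi\|_{L_2(\sM)}^2 = 0$, whence $\phi$ is constant by connectedness of $\sM$; the same argument (after a duality step in the $k = -1$ case) shows uniqueness up to constants across all three scales.

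\textbf{Weak existence ($k = -1$).} Let $\dot H^1_q(\sM)$ denote the quotient of $H^1_q(\sM)$ modulo constants. The goal is to show that the bilinear form $a(\phi,v) := (\gd\phi | \gd v)_\sM$ induces an isomorphism $-\Delta_B^N : \dot H^1_q(\sM) \to (\dot H^1_{q'}(\sM))'$. For $q = 2$, this is a direct consequence of Lax--Milgram and the Poincaré inequality on $\dot H^1_2(\sM)$, itself a byproduct of the Rellich embedding. For general $q$, I would first solve the shifted equation $(\lambda - \Delta_B)\phi = F$ with homogeneous Neumann boundary condition for sufficiently large $\lambda > 0$, using the same localization/Neumann-series scheme as in the proof of Proposition~\ref{pro:wellposedPS}: on interior charts the model problem is the $\bR^n$-resolvent of the Laplacian; on boundary charts it is the half-space Neumann Laplacian, which is known to generate an analytic semigroup on $L_q(\bR^n_+)$ with the required resolvent estimates. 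A Fredholm argument, applicable because $\dot H^1_q(\sM) \hookrightarrow \dot L_q(\sM)$ is compact and the dual operator has the same finite-dimensional kernel $\bR$, then pushes the solvability down to $\lambda = 0$, with the range given precisely by the annihilator of constants. Identifying $F := \cM h - f$, the identity $\la \cM h, 1\ra_\sM = \la h, \operatorname{tr}_\Sigma 1 \ra_\Sigma = \la h, 1\ra_\Sigma$ shows that $F$ annihilates constants exactly when \eqref{solvability cond} holds, and the gradient estimate \eqref{gradient estimate} for $k = -1$ is then a consequence of the bounded inverse of $-\Delta_B^N$.

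\textbf{Strong existence ($k = 0, 1$) and main obstacle.} For $h \in W^{k+1-1/q}_q(\Sigma)$ with $k \ge 0$, I would pick a lift $H \in H^{k+2}_q(\sM)$ with $(\gd H | \nu_\Sigma)_g = h$ on $\Sigma$ (via the inverse trace theorem in a tubular neighborhood of $\Sigma$) and reduce to the homogeneous Neumann problem with right-hand side $f - \Delta_B H \in H^k_q(\sM)$, which retains the compatibility condition. The elevated regularity $\phi \in H^{k+2}_q(\sM)$ then follows from localization plus Calder\'on--Zygmund estimates for the Neumann Laplacian on $\bR^n$ and $\bR^n_+$, combined with a perturbation/Neumann-series argument as in Proposition~\ref{pro:wellposedPS}; summing local estimates yields \eqref{gradient estimate}. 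The main obstacle is the $k = -1$ step, where one must give a precise distributional meaning to the boundary datum through $\cM$, verify that \eqref{solvability cond} correctly characterizes the range of the weak Neumann Laplacian, and execute the Fredholm/duality argument without the Hilbert structure available when $q = 2$; once this weak layer is in place, higher regularity is routine.
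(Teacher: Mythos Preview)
Your proposal is correct and close in spirit to the paper's argument, but the order and the key technical tool for the $k=-1$ layer differ. The paper first treats $k\in\{0,1\}$: it shows by localization that $\omega+\cA_k$ is invertible for large $\omega$, then uses compactness of $H^{k+2}_q\hookrightarrow H^k_q$ to conclude the spectrum is discrete, verifies $\sN(\cA_k)=\bR 1_\sM$, and proves that $0$ is a \emph{semi-simple} eigenvalue (i.e.\ $\sN(\cA_k^2)=\sN(\cA_k)$), which yields the direct decomposition $H^k_q(\sM)=\bR 1_\sM\oplus\sR(\cA_k)$. The weak case $k=-1$ is then obtained not by a separate Fredholm argument on quotient spaces but by Amann's interpolation--extrapolation scale: the operator $A_{-1/2}:H^1_q\to H^{-1}_q$ inherits invertibility and the same kernel/range decomposition automatically from $A_0$, so the solvability condition $\cM h-f\in\sR(\cA_{-1/2})$ drops out without redoing any spectral analysis. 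Your route (weak first via direct Fredholm on $\dot H^1_q$, then bootstrap up) is perfectly valid and perhaps more self-contained, but it requires carrying out the index-zero Fredholm alternative and the range characterization by hand in the dual setting; the paper's use of semi-simplicity plus extrapolation scales packages this more cleanly and avoids treating $q=2$ separately.
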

\goodbreak
\begin{proof}
For $k\in \{0,1\}$ and $u\in H^{k+2}_q(\sM)$, let $\cB u:=   ({\rm tr}_\Sigma\, \gd u | \nu_\Sigma)_g$, where ${\rm tr}_\Sigma $ denotes the trace operator.  Then
$ \cB\in \cL( H^{k+2}_q(\sM), W^{k+1-1/q}_q(\Sigma))$.
 Moreover, let
\begin{equation*}
\begin{aligned}
\cA_k: \sD(\cA_k)\to H^k_q(\sM), \  \sD(\cA_k)=\{ u\in H^{k+2}_q(\sM):\, \cB u =0 \text{ on } \Sigma \}, \   \cA_k u := -\Delta_B u.
\end{aligned}
\end{equation*}
Following a localization argument as in Section~\ref{Section:NS with perfect slip boundary condition strong}, one can show that there exists $\omega_0\in \bR$  such that for all $\omega>\omega_0$
$$
\omega+ \cA_k \in \Lis( \sD(\cA_k), H^k_q(\sM)).
$$
Since the embedding $H^{k+2}_q(\sM) \hookrightarrow H^k_q(\sM)$ is compact, the spectrum $\sigma(\cA_k)$  consists solely of isolated eigenvalues with finite multiplicity and the spectrum does not depend on $q\in (1,\infty)$. Let $\lambda\in \sigma(\cA_s)$ and consider the eigenvalue problem
$$
\lambda u= \cA_k u \quad \text{in }\sM.
$$
Multiplying the above equality by $\overline{u}$ and applying Lemma~\ref{Appendix Lem: divergence thm} yields
$$
\lambda \|u \|_{L_2(\sM)}= \| \gd u \|_{L^2(\sM)},
$$
which implies $\sigma(\cA_k) \subset [0,\infty)$.
In particular,  we have
$$
\sN(\cA_k)=\{ u\in \sD(\cA_k): \, u\equiv \text{constant}\} = \bR 1_\sM,
$$
where $1_\sM$ is the constant 1 function on $\sM$.
Next, we will show that $\lambda=0$ is in fact a semi-simple eigenvalue of $\cA_k$.
Assume that $u\in \sN(\cA_k^2)$ and let
$$
\cA_k u= \phi.
$$
Since $\phi\in \sN(\cA_k)$, it follows that $\phi\equiv$ constant.
Multiplying both sides of the equation above by $\phi$ and   using  Lemma~\ref{Appendix Lem: divergence thm} results in
$$
(\cA_k u | \phi)_\sM= (\gd u | \gd \phi)_{\sM}=0=    \|\phi\|_{L_2(\sM)}^2  ,
$$
which further yields $\phi=0$. Therefore, $\sN(\cA_k^2) = \sN(\cA_k)$. The assertion is thus established.
This further implies that
\begin{equation*}
H^k_q(\sM)= \sN(\cA_k) \oplus  \sR(\cA_k) = \bR 1_\sM \oplus  \sR(\cA_k).
\end{equation*}
Put $Y_0=L_q(\sM)$ and $Y_1=\sD(A_0)$,
where  $A_0:=\omega+\cA_0$ for a fixed number $\omega>0$.
We note that it follows from $\sigma(\cA_0) \subset [0,\infty)$ that
$ \omega+\cA_0\in \Lis( \sD(\cA_0), L_q(\sM)) $ for any $\omega>0$.

\medskip
The pair
$(Y_0,A_0)$ generates an interpolation-extrapolation scale with respect to the complex interpolation functor.
We recall that $Y_1:=D(A_0)=\{u\in H^2_q(\sM): \cB u=0\}.$
Let $Y^\sharp_0 = L_{q'}(\sM)$ and
\begin{equation*}
A^\sharp_0 := (A_0)'=\omega + \cA_0, \quad Y^\sharp_1:=D(A^\sharp_0)=\{u\in H^2_{q'}(\sM): \cB u=0\}.
\end{equation*}
Then $(Y^\sharp_0, A^\sharp_0)$ also generates an interpolation-extrapolation scale $(Y^\sharp_\beta, A^\sharp_\beta)$,
$\beta \in \bR$, the dual scale.

By \cite[Theorem  V.1.5.12]{Ama95}, it holds that
$(Y_\beta)'=Y_{-\beta}^\sharp$  and $(A_\beta)'=A_{-\beta}^\sharp$
for $\beta\in \bR$. In particular,  when $\beta=-1/2$,
\begin{equation*}
\begin{aligned}
& \sD(A_{-1/2})=Y_{1/2}=[Y_0,Y_1]_{ 1/2}=H^1_{q}(\sM;T\sM), \\
& Y_{-1/2}=(Y_{1/2}^\sharp)' = ([Y_0^\sharp,Y_1^\sharp]_{1/2})'=(H^1_{q'}(\sM;T\sM))'= H^{-1}_q(\sM),
\end{aligned}
\end{equation*}
see  Proposition~\ref{Appendix Prop interpolation 4}.
We have
$$A_{-1/2}=\omega+ \cA_{-1/2}:  H^1_q(\sM)= Y_{1/2} \to  Y_{-1/2}=H^{-1}_q(\sM),$$
where $\cA_{-1/2}$ is characterized by
\begin{align*}
 \la \cA_{-1/2}\,\phi | v\ra_\sM = (\gd \phi | \gd v )_\sM ,\quad v\in H^1_{q'}(\sM),
\end{align*}
see \eqref{div-scalar}, and satisfies
$\sN(\cA_{-1/2})=\bR 1_\sM$. Moreover,
$$
 H^{-1}_q(\sM) =  \bR 1_\sM\oplus  \sR(\cA_{-1/2}).
$$
Particularly, this implies that $  \cA_{-1/2} \in \Lis( H^1_q(\sM) \cap  \sR(\cA_{-1/2}) ,  \sR(\cA_{-1/2}))$.
In addition, observe that
$$
 \sR(\cA_{-1/2}) = \{  u\in  H^{-1}_q(\sM) : \, \la  u | 1_{\sM} \ra _\sM =0  \}.
$$
Since
${\rm tr}_\Sigma \in  \cL(H^1_{q'}(\sM), W^{1-1/q'}_{q'} (\Sigma))$, its dual
\begin{equation}
\label{dual-trace}
\cM :=( {\rm tr}_\Sigma |_{H^1_{q'}(\sM)})' \in \cL( W^{-1/q}_q(\Sigma),   H^{-1}_q(\sM) )
\end{equation}
is well-defined.
An important observation is that
$\phi$ is a weak solution of \eqref{Poisson} in $H^1_q(\sM)$ iff
\begin{equation}
\label{Poisson2}
\cA_{-1/2}\,\phi= \cM h - f  ,
\end{equation}
or equivalently,
$$
 (\gd \phi | \gd v)_\sM =  \la \cM h  -f | v\ra_\sM,   \quad v \in H^1_{q'}(\sM).
$$
Since $\cM h  -f \in H^{-1}_q(\sM) $, it suffices to show that $ \cM h  -f  \in \sR(\cA_{-1/2})$.
Indeed, due to \eqref{solvability cond}
\begin{align*}
\la \cM h - f  | 1\ra_\sM   = \la h | 1 \ra_\Sigma - \la f | 1 \ra_\sM =0.
\end{align*}
This implies that \eqref{Poisson2} has a unique (up to a constant) weak solution $\phi\in H^1_q(\sM)$.
Estimate \eqref{gradient estimate} in the case $k=-1$ follows from
\begin{align*}
 \|\gd \phi\|_{L_q(\sM)}  & \leq  \| \phi \|_{H^1_q(\sM)}  \leq C  \| \cM h - f \|_{ H^{-1}_q(\sM) }
 \leq C \big( \|f \|_{ H^{-1}_q(\sM) } + \| \cM h   \|_{   H^{-1}_q(\sM)}   \big) \\
& \leq C \big( \|f \|_{  H^{-1}_q(\sM) } + \|   h   \|_{W^{-1/q}_q(\Sigma)} \big) .
\end{align*}
When $f\in H^k_q(\sM)$ and $h\in W^{k+1-1/q}_q(\Sigma)$ with $k\in \{0,1\}$,
it follows from \cite[Theorem~10.1]{Ama13} that $\cB$ has a right inverse $ \cN_k \in \cL( W^{k+1-1/q}_q(\Sigma), H^{k+2}_q(\sM))$.
Observe that $\phi$ is a strong solution of \eqref{Poisson}
iff $\psi= \phi - \cN_k   h$ is a strong  solution of
\begin{equation}
\label{Poisson3}
\left\{\begin{aligned}
\cA_k \psi   &= \Delta_B  \cN_k  h -f  &&\text{on}&&\sM ,\\
(\gd \psi   | \nu_\Sigma)_g &=0 &&\text{on}&&\Sigma.
\end{aligned}\right.
\end{equation}
Since $h\in W^{k+1-1/q}_q(\sM)$, it is an easy task to verify that $\Delta_B  \cN_k h \in H^k_q(\sM)$. Condition~\eqref{solvability cond} and Lemma~\ref{Appendix Lem: divergence thm} imply that $\Delta_B \cN_k  h - f  \in \sR(\cA_k)$.

Therefore, \eqref{Poisson3} has a unique (up to a constant) strong solution $\psi\in H^{k+2}_q(\sM)$.
The remaining cases in \eqref{gradient estimate} can be established in a similar way to $k=-1$.
This completes the proof.
\end{proof}


\begin{lemma}
\label{Apendix Lemma Helmholtz}
Let $q\in (1,\infty)$ and $k\in \{-1,0,1\}$.
For every $u\in H^{k+1}_q(\sM;T\sM)$, the elliptic boundary value problem
\begin{equation}
\label{Helmholtz}
\left\{\begin{aligned}
\Delta_B \phi &=  \dv u  &&\text{on}&&\sM ,\\
(\gd \phi | \nu_\Sigma)_g &=(u  | \nu_\Sigma)_g &&\text{on}&&\Sigma
\end{aligned}\right.
\end{equation}
has a unique (up to a constant)  solution  $\phi\in H^{k+2}_q(\sM)$.
The solution satisfies
\begin{equation}
\label{est Helmholtz}
\| \gd \phi \|_{H^{k+1}_q(\sM)} \leq C \| u \|_{H^{k+1}_q(\sM)}
\end{equation}
for some constant $C>0$.
In case $k=-1$, equation \eqref{Helmholtz} is interpreted as
\begin{equation}
\label{Helmholtz-weak}
(\gd \phi | \gd v)_\sM = (u | \gd  v)_\sM ,\quad  v\in H^1_{q'}(\sM),
\end{equation}
while  \eqref{Helmholtz-weak} is always satisfied for solutions of \eqref{Helmholtz} in case $k=0,1$.

\medskip
Therefore, the Helmholtz projection $\PH\in \cL(H^{k+1}_q(\sM;T\sM), H^{k+1}_{q,\sigma}(\sM;T\sM))$
is well-defined.
\end{lemma}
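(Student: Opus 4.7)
The plan is to reduce Lemma~\ref{Apendix Lemma Helmholtz} to Lemma~\ref{Appendix Lemma Poisson} by identifying the right-hand side $f := \dv u$ and the Neumann datum $h := (u \mid \nu_\Sigma)_g$, and then verifying the compatibility condition \eqref{solvability cond}. The Helmholtz projection statement will then drop out immediately by setting $\PH u := u - \gd \phi$ and reading off $\dv \PH u = 0$ and $(\PH u \mid \nu_\Sigma)_g = 0$ from \eqref{Helmholtz}.

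First, I would handle the two strong cases $k \in \{0,1\}$. If $u \in H^{k+1}_q(\sM;T\sM)$, then $\dv u \in H^k_q(\sM)$ by the definition of the covariant divergence, and the trace theorem gives $(u \mid \nu_\Sigma)_g \in W^{k+1-1/q}_q(\Sigma)$, since $\nu_\Sigma$ is smooth. The solvability condition \eqref{solvability cond} is just the divergence identity \eqref{div-scalar} applied with test function $\phi \equiv 1$, which yields $\langle \dv u \mid 1 \rangle_\sM = \langle (u \mid \nu_\Sigma)_g \mid 1 \rangle_\Sigma$. Lemma~\ref{Appendix Lemma Poisson} then delivers a unique (up to a constant) $\phi \in H^{k+2}_q(\sM)$ together with \eqref{est Helmholtz}, since $\|\dv u\|_{H^k_q(\sM)} + \|(u \mid \nu_\Sigma)_g\|_{W^{k+1-1/q}_q(\Sigma)} \le C \|u\|_{H^{k+1}_q(\sM)}$.

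For the weak case $k = -1$, I would bypass the ``$\cM h - f$'' splitting and appeal directly to the abstract isomorphism established in the proof of Lemma~\ref{Appendix Lemma Poisson}, namely $\cA_{-1/2} \in \Lis(H^1_q(\sM)\cap\sR(\cA_{-1/2}),\,\sR(\cA_{-1/2}))$ with $\sR(\cA_{-1/2}) = \{F \in H^{-1}_q(\sM) : \langle F \mid 1 \rangle_\sM = 0\}$. Define $F_u \in H^{-1}_q(\sM)$ by $\langle F_u \mid v \rangle_\sM := (u \mid \gd v)_\sM$ for $v \in H^1_{q'}(\sM)$; this is continuous in $v$ with $\|F_u\|_{H^{-1}_q(\sM)} \le \|u\|_{L_q(\sM)}$, and since $\gd 1 = 0$ we have $\langle F_u \mid 1 \rangle_\sM = 0$, so $F_u \in \sR(\cA_{-1/2})$. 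The isomorphism produces a unique (mod constants) $\phi \in H^1_q(\sM)$ with $\cA_{-1/2}\phi = F_u$, which unwinds to precisely the weak formulation \eqref{Helmholtz-weak}; the estimate \eqref{est Helmholtz} for $k=-1$ follows from continuity of $\cA_{-1/2}^{-1}$.

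Having $\phi$ in hand in all three cases, I would set $\PH u := u - \gd\phi$. The equation $\Delta_B\phi = \dv u$ gives $\dv(\PH u) = 0$ in the appropriate (strong or weak) sense, and the boundary condition $(\gd\phi \mid \nu_\Sigma)_g = (u\mid\nu_\Sigma)_g$ gives $(\PH u \mid \nu_\Sigma)_g = 0$; together with \eqref{est Helmholtz} this proves $\PH \in \cL(H^{k+1}_q(\sM;T\sM), H^{k+1}_{q,\sigma}(\sM;T\sM))$. The main technical hurdle is the weak case $k=-1$: one must make sense of $\dv u$ and $(u\mid\nu_\Sigma)_g$ purely distributionally when $u$ is only in $L_q$, and recognize that both pieces of data get absorbed into the single functional $F_u$ whose vanishing on constants is precisely the weak form of the compatibility condition.
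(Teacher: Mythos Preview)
Your proposal is correct and follows essentially the same route as the paper: reduce to Lemma~\ref{Appendix Lemma Poisson} via the divergence theorem for the compatibility condition when $k\in\{0,1\}$, and invoke the isomorphism $\cA_{-1/2}$ on the mean-zero subspace of $H^{-1}_q(\sM)$ for $k=-1$. Your treatment of the weak case is in fact slightly cleaner than the paper's: you define the functional $F_u$ directly by $\langle F_u\mid v\rangle_\sM = (u\mid\gd v)_\sM$, whereas the paper first writes $\cM(\cC u)-\dv u$ for $u\in H^1_q$ and then extends by density to obtain the same functional (called $\cF u$ there). One small omission: you should also verify that strong solutions for $k\in\{0,1\}$ satisfy the weak identity \eqref{Helmholtz-weak}, which the paper does by two applications of Lemma~\ref{Appendix Lem: divergence thm}; this is the consistency check that makes the Helmholtz projection well-defined independently of $k$.
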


\begin{proof}
Suppose first that $k\in \{0,1\}$ and let  $\cC u :=({\rm tr}_{\Sigma}u | \nu_\Sigma)_g $ for $u\in H^{k+1}_q(\sM;T\sM)$.
Then by Lemma~\ref{Appendix Lem: divergence thm}(a), the pair
$$(f,g)=(\dv u, \cC u )$$
satisfies the solvability condition~\eqref{solvability cond}.
Moreover, we have   $\dv u\in H^k_q(\sM)$ and $(u| \nu_\Sigma)_g \in W^{k+1-1/q}_q(\sM)$.
The latter follows from the trace theorem, cf. \cite[Theorem~10.1]{Ama13}.
Solvability of  \eqref{Helmholtz} in these two cases thus follows from Lemma~\ref{Appendix Lemma Poisson}.

\medskip
Suppose $\phi\in H^{k+2}_q(\sM)$ is a solution of \eqref{Helmholtz}.
Employing Lemma~\ref{Appendix Lem: divergence thm} twice, we obtain
\begin{equation}
\label{Helmoltz-weak-derivation}
\begin{aligned}
(\gd \phi | \gd v)_\sM
&= -(\dv u | v)_\sM + ((\gd \phi | \nu_\Sigma)_g | {\rm tr}_\Sigma v)_\Sigma  \\
&= -(\dv u | v)_\sM + (\cC u | {\tr}_\Sigma v)_\Sigma
= (u | \gd v)_\sM 
\end{aligned}
\end{equation}
for all $v\in H^1_{q'}(\sM)$, showing \eqref{Helmholtz-weak}.

\medskip
We now consider the case $k=-1$. Let $\cM$ be as in \eqref{dual-trace}.
Employing the same computation as in \eqref{Helmoltz-weak-derivation}, we obtain
\begin{equation*}
 ( \dv u | v )_\sM  =  ( \cC u   | {\rm tr}_\Sigma v)_\Sigma - ( u | \gd v )_\sM
= \la \cM ( \cC u ) | v\ra_\sM - (  u | \gd v )_\sM , \\
\end{equation*}
for each $ (u,v)\in H^1_q(\sM;T\sM)\times  H^1_{q'}(\sM)$.
Hence,
\begin{align*}
 \left|\la \cM (\cC u ) - \dv u | v \ra_\sM \right|
= |( u | \gd v )_\sM | \leq  \| u\|_{L_q(\sM)} \| v \|_{H
^1_{q'}(\sM)},
\end{align*}
which further implies
\begin{equation}
\label{boundedness of weak divergence}
[u \mapsto (\cM ( \cC u ) -\dv u ) ] \in \cL(H^1_q(\sM; T\sM) , H^{-1}_q(\sM) ).
\end{equation}
By the density of   $H^1_q(\sM; T\sM)$ in $ L_q(\sM; T\sM)$ and \eqref{boundedness of weak divergence}, the operator $[u \mapsto (\cM ( \cC u ) - \dv u) ]$  has a unique continuous extension in $\cL(L_q(\sM;T\sM),  H^{-1}_q(\sM;T\sM) )$, denoted by $\cF$.
The extension satisfies
$$
 ( u |  \gd v)_\sM=   \la\cF u | v \ra_\sM  ,  \quad (u,v)\in L_q(\sM;T\sM) \times H^1_{q'}(\sM).
$$
By \eqref {Helmoltz-weak-derivation} and a density argument, we have
$$
(\gd \phi | \gd v)_\sM  =  ( u |  \gd v)_\sM=   \la\cF u | v \ra_\sM   ,  \quad (u,v)\in L_q(\sM;T\sM) \times H^1_{q'}(\sM).
$$
Hence, \eqref{Helmholtz} can be interpreted as
 $$
 \cA_{-1/2}\,\phi =\cF u.
$$
By analogous arguments as in the proof of Lemma~\ref{Appendix Lemma Poisson}, this problem has (up to constants) a unique solution, which satisfies~\eqref{est Helmholtz},
as
$\|\cF u\|_{ H^{-1}_q(\sM)  }\le c \|u\|_{L_q(\sM)}.$
\end{proof}

\section{Interpolation spaces}\label{Appendix C}

As in Section~\ref{Section:NS with perfect slip boundary condition weak}, let $A_0=\omega+A_N:  X_1:=\sD(A_N) \to X_0$, for some $\omega>0$, with $X_0=L_{q,\sigma}(\sM; T\sM)$ and
$$
X_1= \{u\in H^2_{q,\sigma}(\sM;T\sM): \alpha u + \cP_\Sigma \left( (\nabla u + [\nabla  u]^{\sT} )  \nu_\Sigma \right)=0 \text{ on } \Sigma\}.
$$
Recall  that $A_0$ is invertible.
By \cite[Theorems V.1.5.1 and V.1.5.4]{Ama95}, the pair $(X_0, A_0)$ generates an interpolation-extrapolation scale $(X_\beta, A_\beta)$, $\beta\in \bR$, with respect to the complex interpolation functor.
When $\beta\in (0,1)$, $A_\beta$ is the $X_\beta$-realization of $A_0$, where
$$
X_\beta =[X_0,X_1]_\beta
$$
in view of \eqref{cHi AN}.
Let $X_0^\sharp:=(X_0)'=L_{q',\sigma}(\sM;T\sM)$ and
\begin{equation*}
\begin{aligned}
&A_0^\sharp:=(A_0)'=(\omega+A_N)' = \omega - \mu_s \PH (\Delta_\sM + \Ric^\sharp), \\
& \sD(A_0^\sharp)=X_1^\sharp:=\{u\in H^2_{q' , \sigma}(\sM;T\sM) : \, \alpha u +  \cP_\Sigma \left( (\nabla u + [\nabla  u]^{\sT} )  \nu_\Sigma \right)    =0   \text{ on } \Sigma \}.
\end{aligned}
\end{equation*}
Then $(X_0^\sharp, A_0^\sharp)$ generates an interpolation-extrapolation scale $(X_\beta^\sharp, A_\beta^\sharp)$, $\beta\in \bR$, the dual scale.

\medskip\noindent
In the following, we set
$$
H^2_{q,\cB}(\sM;T\sM) = \{u\in H^2_q(\sM;T\sM) : \, \cB u    =0   \text{ on } \Sigma \},
$$
where $\cB u  = (\cB_1 u , \cB_2 u) :=({\rm tr}_\Sigma \left(     \cP_\Sigma(   \nabla u \nu_\Sigma )  + (\alpha + L_\Sigma) \cP_\Sigma u \right) , {\rm tr}_\Sigma  (u | \nu_\Sigma)_g )$.
One readily verifies that
\begin{equation}
\label{X1 equals}
X_1 = H^2_{q,\cB}(\sM;T\sM) \cap L_{q,\sigma}(\sM;T\sM).
\end{equation}
Indeed,  given  any $u\in X_1$, we immediately have  $u\in H^2_q(\sM;T\sM) \cap L_{q,\sigma}(\sM;T\sM)$ and  the boundary condition $\cB_2 u=0$
is automatically satisfied,  see \eqref{sigma-u-nu}.
In view of  \eqref{boundary condition 1}, it holds that on $\Sigma$
\begin{align*}
0  & =  \alpha u +   2\cP_\Sigma ( D_u \nu_\Sigma )
 =     \cP_\Sigma (   \nabla u \nu_\Sigma )  + (\alpha + L_\Sigma) \cP_\Sigma u  =\cB_1 u.
\end{align*}
Therefore, we conclude that $X_1 \subset H^2_{q,\cB}(\sM;T\sM) \cap L_{q,\sigma}(\sM;T\sM)$.
The converse inclusion $H^2_{q,\cB}(\sM;T\sM) \cap L_{q,\sigma}(\sM;T\sM) \subset X_1   $ follows from
$$H^2_{q,\cB}(\sM;T\sM) \cap L_{q,\sigma}(\sM;T\sM) \subset H^2_{q,\sigma}(\sM;T\sM) $$
and \eqref{boundary condition 1}.

In order to characterize the interpolation spaces $X_\beta = [X_0, X_1]_\beta$ and $X_{\beta, p} = (X_0, X_1)_{\beta,p}$ we first include two auxiliary results.
\begin{lemma}
\label{Appendix lem interpolation non-boundary}
Given $\theta\in (0,1)$ and $p\in (1,\infty)$, let $(\cdot,\cdot)_\theta$ stand for either the complex interpolation functor $[\cdot,\cdot]_\theta$,
or the real interpolation functor $(\cdot,\cdot)_{\theta,p}$, respectively.
Then
$$
(L_{q,\sigma}(\sM;T\sM), H^2_{q,\sigma}(\sM;T\sM))_\theta  \doteq (L_q(\sM;T\sM), H^2_q(\sM;T\sM))_\theta \cap  L_{q,\sigma}(\sM;T\sM).
$$
\end{lemma}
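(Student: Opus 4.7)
The strategy is the standard retraction/coretraction argument for interpolation of ranges of bounded projections, applied to the Helmholtz projection $\PH$. By Lemma~\ref{Apendix Lemma Helmholtz} (taken with $k=-1$ and $k=1$), $\PH$ restricts to a bounded linear operator both on $L_q(\sM;T\sM)$ and on $H^2_q(\sM;T\sM)$, and in each case it is a projection onto the corresponding $\sigma$-subspace since $\PH|_{L_{q,\sigma}}=\mathrm{id}$ by construction. In particular $L_{q,\sigma}(\sM;T\sM)$ is closed in $L_q(\sM;T\sM)$ and $H^2_{q,\sigma}(\sM;T\sM)$ is closed in $H^2_q(\sM;T\sM)$.

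For the inclusion ``$\subseteq$'', monotonicity of the interpolation functor applied to the continuous embedding of the pair $(L_{q,\sigma}(\sM;T\sM), H^2_{q,\sigma}(\sM;T\sM))$ into $(L_q(\sM;T\sM), H^2_q(\sM;T\sM))$ gives
$$
(L_{q,\sigma}(\sM;T\sM), H^2_{q,\sigma}(\sM;T\sM))_\theta \hookrightarrow (L_q(\sM;T\sM), H^2_q(\sM;T\sM))_\theta.
$$
Since $H^2_{q,\sigma}(\sM;T\sM) \hookrightarrow L_{q,\sigma}(\sM;T\sM)$ and $L_{q,\sigma}(\sM;T\sM)$ is a Banach space in its own right, any element of $(L_{q,\sigma}(\sM;T\sM), H^2_{q,\sigma}(\sM;T\sM))_\theta$ also belongs to $L_{q,\sigma}(\sM;T\sM)$, yielding the claimed inclusion.

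For the inclusion ``$\supseteq$'', I will use the interpolation property of $\PH$. Since
$$
\PH \in \cL(L_q(\sM;T\sM), L_{q,\sigma}(\sM;T\sM)) \cap \cL(H^2_q(\sM;T\sM), H^2_{q,\sigma}(\sM;T\sM)),
$$
the interpolation functor yields
$$
\PH \in \cL\big((L_q(\sM;T\sM), H^2_q(\sM;T\sM))_\theta,\ (L_{q,\sigma}(\sM;T\sM), H^2_{q,\sigma}(\sM;T\sM))_\theta\big).
$$
Let $u$ belong to the right-hand side of the asserted identity. Then $u \in L_{q,\sigma}(\sM;T\sM)$, so $\PH u = u$, while simultaneously $u \in (L_q(\sM;T\sM), H^2_q(\sM;T\sM))_\theta$, so the mapping property above gives $u = \PH u \in (L_{q,\sigma}(\sM;T\sM), H^2_{q,\sigma}(\sM;T\sM))_\theta$. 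This completes the proof of equality of sets, and the norm equivalence follows from the continuity of both inclusions together with the open mapping theorem (or directly from the estimates produced by the two steps).

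I do not anticipate any real obstacle here: everything rests on Lemma~\ref{Apendix Lemma Helmholtz}, which supplies the crucial boundedness of $\PH$ on both endpoint spaces, plus the projection property $\PH^2=\PH$ built into the definition of the $\sigma$-spaces. The argument is uniform in $\theta \in (0,1)$ and $p \in (1,\infty)$ and works identically for both the complex and real interpolation functors, since the interpolation property used is common to both.
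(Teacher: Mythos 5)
Your proof is correct and takes essentially the same route as the paper: it rests on the Helmholtz projection being bounded on both endpoint spaces (Lemma~\ref{Apendix Lemma Helmholtz}) and being a projection onto the $\sigma$-subspaces, then uses the interpolation property of $\PH$. The paper compresses the two-inclusion argument into a single citation of the retraction/coretraction theorem \cite[Theorem~1.17.1.1]{Tri78}, whereas you spell that theorem's proof out; the content is the same.
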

\begin{proof}
Let $\widetilde{\bP}_H:= \PH|_{H^2_q(\sM;T\sM)}$.
Then Lemma~\ref{Apendix Lemma Helmholtz} implies
$$\widetilde{\bP}_H \in \cL( H^2_q(\sM;T\sM),  H^2_{q,\sigma}(\sM;T\sM)).$$
Moreover,   $\widetilde{\bP}_H^2=\widetilde{\bP}_H$  and $\widetilde{\bP}_H u =u $ for all $u\in H^2_{q,\sigma}(\sM;T\sM)$.
The assertion then follows from \cite[Theorem~1.17.1.1]{Tri78}.
\end{proof}
\begin{lemma}
\label{Appendix lem interpolation functor}
Given $\theta\in (0,1)$ and $p\in (1,\infty)$, let $(\cdot,\cdot)_\theta$ stand for either the complex interpolation functor $[\cdot,\cdot]_\theta$ or the real interpolation functor $(\cdot,\cdot)_{\theta,p}$.
Then
$$
(X_0, X_1)_\theta    \doteq (L_q(\sM;T\sM), H^{2  }_{q,\cB}(\sM;T\sM))_\theta \cap  L_{q,\sigma}(\sM;T\sM).
$$
\end{lemma}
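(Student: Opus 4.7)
The inclusion ``$\subseteq$'' is immediate: from $X_0 \hookrightarrow L_q(\sM;T\sM)$ and $X_1 \hookrightarrow H^2_{q,\cB}(\sM;T\sM)$, functoriality of the interpolation functor $(\cdot,\cdot)_\theta$ (real or complex) gives $(X_0, X_1)_\theta \hookrightarrow (L_q, H^2_{q,\cB})_\theta$. Since $X_1 \hookrightarrow X_0 = L_{q,\sigma}$ and $L_{q,\sigma}$ is closed in $L_q$, also $(X_0, X_1)_\theta \hookrightarrow L_{q,\sigma}$.

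For the reverse inclusion ``$\supseteq$'', the plan is to build a correction of the Helmholtz projection that lands in $X_1$. The obstacle is that for $b \in H^2_{q,\cB}$, Lemma~\ref{Apendix Lemma Helmholtz} gives $\PH b \in H^2_{q,\sigma}$, but a direct computation via \eqref{boundary condition 1}, using the symmetry $\nabla \gd\psi_b = [\nabla \gd\psi_b]^{\sT}$ and $(\gd\psi_b | \nu_\Sigma)_g = 0$, shows
\begin{equation*}
\cB_1(\PH b) \;=\; -\cB_1(\gd\psi_b) \;=\; -(\alpha + 2L_\Sigma)\gd\psi_b \quad \text{on}\ \Sigma,
\end{equation*}
where $b = \PH b + \gd\psi_b$ is the Helmholtz decomposition, so $\PH b \notin X_1$ in general. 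Introduce the correction operator $\mathcal{N}: H^2_{q,\cB}(\sM;T\sM) \to H^2_{q,\sigma}(\sM;T\sM)$ defined by letting $\mathcal{N}(b)$ be the unique solution of the inhomogeneous Stokes--Navier problem
\begin{equation*}
\left\{\begin{aligned}
\bigl(\omega - \mu_s(\Delta_\sM + \Ric^\sharp)\bigr)\mathcal{N}(b) + \gd \pi &= 0 && \text{on}\ \sM,\\
\dv\mathcal{N}(b) &= 0 && \text{on}\ \sM,\\
(\mathcal{N}(b) | \nu_\Sigma)_g &= 0 && \text{on}\ \Sigma,\\
\alpha \mathcal{N}(b) + \cP_\Sigma\bigl((\nabla \mathcal{N}(b) + [\nabla \mathcal{N}(b)]^{\sT})\nu_\Sigma\bigr) &= (\alpha + 2L_\Sigma)\gd\psi_b && \text{on}\ \Sigma,
\end{aligned}\right.
\end{equation*}
with $\omega > 0$ fixed sufficiently large. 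By construction $P_1 b := \PH b + \mathcal{N}(b) \in X_1$, and since $\gd\psi_b = 0$ for $b \in X_1$, one has $P_1|_{X_1} = \id$.

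With $P_1$ in hand, the reverse inclusion follows by a K-functional modification. For $u \in (L_q, H^2_{q,\cB})_\theta \cap L_{q,\sigma}$ with near-optimal decomposition $u = a(t) + b(t)$, set $b^\prime(t) := P_1 b(t) \in X_1$ and $a^\prime(t) := u - b^\prime(t) = \PH a(t) - \mathcal{N}(b(t)) \in X_0$. The estimate $\|\mathcal{N}(b)\|_{H^2_q(\sM)} \le C \|b\|_{H^2_q(\sM)}$ from the solvability of the Stokes--Navier system yields a bound on $\|b^\prime(t)\|_{X_1}$ in terms of $\|b(t)\|_{H^2_{q,\cB}}$, and a complementary bound $\|\mathcal{N}(b)\|_{L_q(\sM)} \le C \|b\|_{L_q(\sM)}$ (obtained by duality/weak-formulation arguments in the spirit of Section~\ref{Section:NS with perfect slip boundary condition weak}) gives the required control of $\|a^\prime(t)\|_{X_0}$, leading to $K(t, u; X_0, X_1) \le C\, K(t, u; L_q, H^2_{q,\cB})$; integrating against $t^{-\theta p - 1}\,dt$ gives $u \in (X_0, X_1)_{\theta,p}$, and the complex-interpolation identity follows by an analogous Calder\'on-space analytic extension, or by reiteration from the real-interpolation identity.

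The main technical obstacle is the unique solvability, with the estimates $\|\mathcal{N}(b)\|_{H^2_q(\sM)} \le C\|b\|_{H^2_q(\sM)}$ and $\|\mathcal{N}(b)\|_{L_q(\sM)} \le C\|b\|_{L_q(\sM)}$, of the inhomogeneous Stokes--Navier problem defining $\mathcal{N}$. This is the direct analogue of Lemma~\ref{lem: mat} with nontrivial tangential boundary datum $(\alpha + 2L_\Sigma)\gd\psi_b \in W^{1-1/q}_q(\Sigma; T\Sigma)$: the datum is lifted to an $H^2_q(\sM;T\sM)$-extension (its norm controlled by $\|b\|_{H^2_q(\sM)}$ via Lemma~\ref{Apendix Lemma Helmholtz} and trace theory), and the resulting homogeneous-boundary problem is handled by the same localization/Neumann-series perturbation scheme as in Proposition~\ref{pro:wellposedPS}. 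The $L_q$-side of the estimate requires passing to the weak formulation and exploiting the bounded $H^\infty$-calculus of the weak Stokes operator from Theorem~\ref{thm:Stokes-H-infinty}(a).
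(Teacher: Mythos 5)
Your proposal correctly identifies the structure of the argument (the easy inclusion via functoriality, and for the reverse inclusion, the need for a bounded projection from the pair $(L_q, H^2_{q,\cB})$ onto $(X_0,X_1)$), and your diagnosis of the obstruction — that $\PH$ sends $H^2_{q,\cB}$ into $H^2_{q,\sigma}$ but not into $X_1$, with $\cB_1(\PH b)=-(\alpha+2L_\Sigma)\gd\psi_b$ — is correct. However, there is a genuine gap in the crux of the argument, namely the claimed $L_q$-endpoint bound $\|\mathcal{N}(b)\|_{L_q}\le C\|b\|_{L_q}$. Computing $\la\mathcal{N}(b)\mid v\ra_\sM$ for $v\in L_{q',\sigma}$ via $w:=(\omega+A^\sharp_N)^{-1}v$ and Green's identity, everything collapses to a single boundary term $\mu_s\bigl((\alpha+2L_\Sigma)\gd\psi_b\mid w\bigr)_\Sigma$. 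This expression involves the \emph{tangential trace} of $\gd\psi_b$, which does not make sense for $b\in L_q$ (for such $b$, $\gd\psi_b$ is merely in $L_q(\sM;T\sM)$ and has no trace); so showing that the formula, defined a priori only on $H^2_{q,\cB}$, is $L_q$-bounded requires a real argument that is not supplied. Also, the naive resolvent estimate for the lifted boundary data only yields $\|\mathcal{N}(b)\|_{L_q}\lesssim\|b\|_{H^1_q}$, which is not enough for the K-functional argument to close.

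The paper avoids this difficulty by taking a different projection, $Q_1:=(\lambda+A_N)^{-1}\PH(\lambda+A_\cB)$, whose \emph{dual} $Q'=(\lambda+A_\cB^\sharp)(\lambda+A_N^\sharp)^{-1}$ is manifestly bounded from $L_{q',\sigma}$ to $L_{q'}$; hence $Q''\in\cL(L_q,L_{q,\sigma})$, and since $Q\subset Q''$, the densely defined operator $Q$ extends to a bounded projection $Q_0$ with $Q_0|_{X_0}=I$. The conclusion for both real and complex interpolation then comes in one stroke from the retraction/coretraction theorem (Triebel 1.17.1.1), making your separate K-functional and Calder\'on-space discussions unnecessary (and note that reiteration alone does not transfer the real-interpolation identity to the complex one). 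Incidentally, your $P_1=\PH+\mathcal{N}$ and the paper's $Q_1$ are genuinely different projections onto $X_1$ (their difference is $-2\mu_s(\lambda+A_N)^{-1}\PH\Ric^\sharp\gd\psi_b$), so the boundedness of $P_1$ on $L_q$ does not follow tautologically; it happens to hold because this difference is $L_q$-bounded, but establishing that independently is exactly the work you have skipped. To repair the proof you would either need to replace $\mathcal{N}$ with the paper's algebraic construction $Q_1$ and run the bidual argument, or carry out in detail the dual estimate for the boundary term, which would require rewriting $(\gd\psi_b\mid\cdot)_\Sigma$ as an interior pairing that extends to $b\in L_q$.
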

\begin{proof}
Define
$$A_\cB : \sD(\Delta_\cB):=H^2_{q,\cB}(\sM;T\sM) \to L_q(\sM;T\sM)$$
by $A_\cB u:= -\mu_s(\Delta_\sM + \Ric^\sharp )u$.
It follows from analogous arguments as  in Section~\ref{Section:NS with perfect slip boundary condition strong} that there exists $\lambda_0$ such that for all $\lambda>\lambda_0$
$$
\lambda + A_\cB \in \Lis (H^2_{q,\cB}(\sM;T\sM), L_q(\sM;T\sM)) .
$$
It follows from   Lemma~\ref{surject AN} that there exists $\lambda_0$ such that for all $\lambda>\lambda_0$
$$
\lambda + A_N \in \Lis (X_1, X_0).
$$
Then the assertion follows from a similar argument to \cite[Lemma~3.2]{Ama00}.
For the reader's convenience, we will nevertheless include a proof.
Let
$$Q_1:=(\lambda+ A_N)^{-1} \PH (\lambda+ A_\cB).$$
Then $Q_1\in \cL(H^2_{q,\cB}(\sM;T\sM), X_1)$
 and
\begin{align*}
Q_1^2 u & = (\lambda+ A_N)^{-1} \PH (\lambda+ A_\cB) (\lambda+ A_N)^{-1} \PH (\lambda+ A_\cB) u \\
&=(\lambda+ A_N)^{-1}\PH (\lambda+ A_\cB) u =Q_1 u,
\end{align*}
where we have employed the relations $ \PH A_\cB |_{X_1}=A_N $  and $\PH^2=\PH$.
This further implies $Q_1|_{X_1}= I_{X_1}$, and thus $Q_1$ is a bounded projection from $H^2_{q,\cB}(\sM;T\sM)$ onto $ X_1$.
Now consider $Q_1$ as a closed densely defined operator from  $L_q(\sM;T\sM)$ to  $L_{q,\sigma}(\sM;T\sM)$ with domain $H^2_{q,\cB}(\sM;T\sM)$ and denote this operator by $Q$.
Let
\begin{equation*}
\begin{aligned}
& A_\cB^\sharp: H^2_{q',\cB}(\sM;T\sM) \to L_{q'}(\sM;T\sM), && A_\cB^\sharp:= -\mu_s(\Delta_\sM + \Ric^\sharp)  \\
& A_N^\sharp : \sD(A_{N,q'} ) \to L_{q', \sigma}(\sM;T\sM), && A_N^\sharp := -\mu_s \PH (\Delta_\sM + \Ric^\sharp ) .
\end{aligned}
\end{equation*}
Then
\begin{equation*}
\begin{aligned}
Q' &= (\lambda + A_\cB)' \PH^{\sT} [(\lambda+ A_N  )^{-1}]' \\
&= (\lambda + A_\cB^\sharp) (\lambda+ A_N^\sharp )^{-1}  \in \cL( L_{q', \sigma}(\sM;T\sM) , L_{q' }(\sM;T\sM)),
\end{aligned}
\end{equation*}
where $\PH^{\sT}$ is the dual operator of $\PH\in \cL( L_{q }(\sM;T\sM), L_{q , \sigma}(\sM;T\sM))$.
We note that $\PH^{\sT}$ is indeed the embedding operator $i^\sharp: L_{q', \sigma}(\sM;T\sM) \to L_{q' }(\sM;T\sM)$.
Therefore, $Q''\in \cL( L_q(\sM;T\sM) , L_{q , \sigma}(\sM;T\sM)  )$.
Together with the inclusion $Q\subset Q''$ and the density of $H^2_{q,\cB}(\sM;T\sM)$ in $L_q(\sM;T\sM)$, this implies that $Q  $ has a unique bounded extension $Q_0 \in \cL( L_q(\sM;T\sM) , L_{q , \sigma}(\sM;T\sM)  )$.
It is easy to check that $Q_0$ is a projection and $Q_0|_{X_0}=I_{X_0}$. Then the assertion follows from \cite[Theorem~1.17.1.1]{Tri78}.
\end{proof}
We are now ready to state the first main result of this section, providing
a characterization of the complex interpolation spaces $X_\beta:=[X_0, X_1]_\beta$.

\begin{proposition}
\label{Appendix Prop interpolation 1}
Let $\beta\in (0,1)\setminus \{ \frac{1}{2}+\frac{1}{2q} \}$. Then $X_\beta= H^{2\beta}_{q,\sigma,\cB}(\sM; T\sM)$, where
\begin{equation*}
\begin{aligned}
H^{2\beta}_{q,\sigma,\cB}(\sM; T\sM)
 =\begin{cases}
\{ u\in H^{2\beta}_{q,\sigma}(\sM;T\sM): \, \alpha u+ 2\cP_\Sigma ( D_u \nu_\Sigma )    =0  \ \   \text{on}\ \  \Sigma\},  &  \frac{1}{2}+\frac{1}{2q}< \beta<1,\\
H^{2\beta}_{q,\sigma}(\sM;T\sM), & 0<\beta<\frac{1}{2}+\frac{1}{2q}.
\end{cases}
\end{aligned}
\end{equation*}
\end{proposition}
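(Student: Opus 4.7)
The strategy is to first reduce the identification of $X_\beta = [X_0,X_1]_\beta$ to an interpolation problem between $L_q(\sM;T\sM)$ and $H^2_{q,\cB}(\sM;T\sM)$ via the already established Lemma~\ref{Appendix lem interpolation functor}, which gives
\begin{equation*}
X_\beta \doteq [L_q(\sM;T\sM), H^{2}_{q,\cB}(\sM;T\sM)]_\beta \cap L_{q,\sigma}(\sM;T\sM).
\end{equation*}
Thus everything reduces to identifying the complex interpolation space for the scalar-type problem on the full (not necessarily solenoidal) tangent bundle, subject to the mixed boundary operator $\cB=(\cB_1,\cB_2)$, and then intersecting with $L_{q,\sigma}$.

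Next, I would localize using the atlas $\{(U_k,\varphi_k)\}_{k\in\cK}$ and the partition of unity $\{\xi_k^2\}$ introduced in Section~\ref{Section:NS with perfect slip boundary condition}, together with the retraction/coretraction pair $(\Re,\Rc)$. The key point is that the retraction/coretraction commutes with the interpolation functor, so it suffices to identify the interpolation spaces in the flat half-space model. In local boundary coordinates, after the algebraic computation already carried out in \eqref{boundary condition 1} and \eqref{boundary condition 2}, the boundary operator $\cB$ decouples into a Dirichlet condition $u^n|_{\bR^{n-1}}=0$ on the normal component (coming from $\cB_2$) and a genuinely first-order condition of Neumann/Robin type on the tangential components $u^1,\dots,u^{n-1}$ (coming from $\cB_1$). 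For this mixed system, the classical interpolation results for Sobolev spaces with boundary conditions (see e.g.\ Grisvard, Triebel, or the retraction arguments in Amann; cf.\ also the analogous computation in \cite{PrWi18}) yield
\begin{equation*}
[L_q(\bX_k;\bR^n),H^{2}_{q,\cB}(\bX_k;\bR^n)]_\beta =
\begin{cases}
H_q^{2\beta}(\bX_k;\bR^n), & 0<2\beta<\tfrac{1}{q},\\[2pt]
\{v\in H_q^{2\beta}: v^n=0\}, & \tfrac{1}{q}<2\beta<1+\tfrac{1}{q},\\[2pt]
\{v\in H_q^{2\beta}: \cB_1 v=0,\ v^n=0\}, & 1+\tfrac{1}{q}<2\beta<2,
\end{cases}
\end{equation*}
with the thresholds $2\beta=\tfrac{1}{q}$ and $2\beta=1+\tfrac{1}{q}$ excluded because at these values the corresponding trace map ceases to be surjective onto its natural Besov target. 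Reassembling via $\Re$ then gives the analogous characterization of $[L_q(\sM;T\sM),H^2_{q,\cB}(\sM;T\sM)]_\beta$ globally on $\sM$.

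Finally, I would intersect with $L_{q,\sigma}(\sM;T\sM)$ and invoke \eqref{sigma-u-nu}, which states that any element of $H^{s}_{q,\sigma}(\sM;T\sM)$ with $s>1/q$ automatically satisfies $(u|\nu_\Sigma)_g=0$ on $\Sigma$, i.e.\ $\cB_2 u=0$. Thus for every $\beta$ with $2\beta\neq \tfrac{1}{q}$ the condition $\cB_2 u=0$ is either vacuous (when $2\beta<1/q$, the trace is not defined) or automatic in the solenoidal class (when $2\beta>1/q$). The only remaining condition is $\cB_1 u=0$, which via \eqref{boundary condition 1}--\eqref{boundary condition 2} is equivalent to $\alpha u + 2\cP_\Sigma(D_u\nu_\Sigma)=0$ on $\Sigma$, and which by the trace theory above is meaningful (and nontrivial as a constraint) precisely when $2\beta>1+1/q$. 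Combining both cases yields the stated dichotomy at the threshold $\beta=\tfrac{1}{2}+\tfrac{1}{2q}$.

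The main technical obstacle is the identification of the half-space interpolation space with the genuinely mixed boundary conditions (Dirichlet on one vector component, first-order on the others): while each scalar case is classical, one has to verify that the coupling through the geometric quantities $g^{nj}/\sqrt{g^{nn}}$ is a zero-order perturbation not affecting the interpolation identity, which is handled by a Neumann-series/perturbation argument parallel to the one already used in Proposition~\ref{pro:wellposedPS}. All other steps---localization, retraction/coretraction compatibility, and the use of \eqref{sigma-u-nu}---are routine once this building block is in place.
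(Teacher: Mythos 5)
Your overall reduction mirrors the paper's: you both use Lemma~\ref{Appendix lem interpolation functor} to pass from $X_\beta$ to $[L_q(\sM;T\sM), H^2_{q,\cB}(\sM;T\sM)]_\beta \cap L_{q,\sigma}(\sM;T\sM)$, and you both then invoke \eqref{sigma-u-nu} and the normality observation \eqref{X1 equals} to massage the intersection into the stated form. The substantive difference is in how the middle step -- identifying $[L_q(\sM;T\sM), H^2_{q,\cB}(\sM;T\sM)]_\beta$ -- is handled. The paper observes that $\cB_1,\cB_2$ are normal boundary operators in the sense of Seeley and then cites \cite[Theorem~4.1]{See72} directly, which provides the global characterization in one stroke. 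You instead propose to localize via the retraction/coretraction $(\Re,\Rc)$, identify the flat half-space spaces, and reassemble. That is an honest alternative in spirit (it is roughly what goes into a proof of Seeley's result), but it contains a gap: the pair $(\Re,\Rc)$ is constructed for the scale $\fF^s(\sM;T\sM)$ \emph{without} boundary conditions, and it does not intertwine the global boundary operator $\cB$ with the frozen flat operators $T_k$. When $u\in H^2_{q,\cB}(\sM;T\sM)$, the localized piece $\Rck u$ generally fails to satisfy $T_k(\Rck u)=0$, because the cutoff functions $\xi_k$ and the curvature/Weingarten terms contribute derivative terms across the boundary. So "the retraction/coretraction commutes with the interpolation functor" is not justified for the constrained spaces; you would either need a modified retraction that respects the boundary conditions, or a direct perturbation argument at the level of the interpolation couples. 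The zero-order perturbation by $g^{nj}/\sqrt{g^{nn}}$ that you flag is real but secondary; the commutator-with-cutoffs issue is the one that genuinely blocks the naive transfer.

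There is a second, smaller gap: your local interpolation statement explicitly excludes the value $2\beta=1/q$, but the Proposition includes $\beta=\frac{1}{2q}$ in its second alternative. You do not explain how to recover this endpoint. The paper handles it separately: it notes $X_{1/2}=H^1_{q,\sigma}(\sM;T\sM)$, applies the reiteration theorem to write $X_{1/2q}=[X_0,X_{1/2}]_{1/q}$, and then uses Lemma~\ref{Appendix lem interpolation non-boundary} (interpolation of solenoidal spaces without boundary conditions) together with another reiteration to identify $X_{1/2q}\doteq H^{1/q}_{q,\sigma}(\sM;T\sM)$. Your proof needs an analogous extra step to cover that value, as it lies in the interior of the parameter range asserted by the Proposition.
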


\begin{proof}
We first observe that
\begin{equation*}
\begin{aligned}
&\cB_1 \in \cL(W^s_q(\sM; T\sM) , W^{s-1-1/q}_q(\Sigma; T\Sigma)),  && 1+1/q<s \le 2, \\
&\cB_2 \in \cL(W^s_q(\sM; T\sM) , W^{s-1/q}_q(\Sigma)), && 1/q <s \le 2,
\end{aligned}
\end{equation*}
are  normal boundary operators in the sense of \cite[Definition~3.1]{See72},
see also \cite[Section VIII.2]{Ama19}.
Then \cite[Theorem~4.1]{See72} implies,  see also \cite[Theorem 2.4.8]{Ama19} for the case $\sM= \bR^n_+$,
$$
[L_q(\sM;T\sM), H^2_{q,\cB}(\sM;T\sM)]_\beta =: H^{2 \beta}_{q,\cB}(\sM;T\sM),
$$
where
$$
H^{2 \beta}_{q,\cB}(\sM;T\sM) \doteq
\begin{cases}
\{ u\in H^{2\beta}_q(\sM;T\sM): \, \cB u    =0    \quad  \text{on } \Sigma\},  & \frac{1}{2}+\frac{1}{2q}< \beta<1,\\
\{ u\in H^{2\beta}_q(\sM;T\sM): \, \cB_2 u    =0    \quad  \text{on } \Sigma\},  & \frac{1}{2q}< \beta<\frac{1}{2}+\frac{1}{2q}, \\
H^{2\beta}_q(\sM;T\sM), &\; 0<\beta< \frac{1}{2q}.
\end{cases}
$$
Lemma~\ref{Appendix lem interpolation functor} shows that for $\beta\in (0,1)\setminus\{\frac{1}{2q}, \frac{1}{2}+\frac{1}{2q}\}$
$$
[X_0, X_1]_\beta  \doteq   H^{2 \beta}_{q,\cB}(\sM;T\sM)  \cap  L_{q,\sigma}(\sM;T\sM).
$$
By a similar argument as in~\eqref{X1 equals}, we obtain
\begin{equation*}
\begin{aligned}
& H^{2 \beta}_{q,\cB}(\sM;T\sM)  \cap  L_{q,\sigma}(\sM;T\sM) \\
=& \begin{cases}
\{ u\in H^{2\beta}_{q,\sigma}(\sM;T\sM): \, \alpha u+  2\cP_\Sigma ( D_u \nu_\Sigma )    =0    \quad  \text{on } \Sigma\}, & \beta \in (\frac{1}{2}+\frac{1}{2q}, 1), \\
H^{2\beta}_{q,\sigma}(\sM;T\sM), &  \beta \in (0,\frac{1}{2}+\frac{1}{2q}) \setminus \{\frac{1}{2q} \} .
\end{cases}
\end{aligned}
\end{equation*}
We will now pay attention to the particular case $\beta = \frac{1}{2q}$, which is currently excluded in the characterization above.
We know that $X_{1/2}= H^1_{q,\sigma}(\sM;T\sM)$.
It  follows from the reiteration theorem that
$$
[X_0,X_{1/2}]_\alpha = [X_0 , [X_0, X_1]_{1/2}]_\alpha =[X_0, X_1]_{\frac{\alpha}{2}} = X_{\alpha/2} .
$$
Taking $\alpha=1/q$ and using Lemma~\ref{Appendix lem interpolation non-boundary} and the reiteration theorem yields
\begin{align*}
X_{1/ 2q} & \doteq [X_0,X_{1/2}]_{1/ q}  = [ X_0, [X_0, H^2_{q,\sigma}(\sM;T\sM)]_{1/2}]_{1/q} \\
& = [X_0,H^2_{q,\sigma}(\sM;T\sM)]_{1/2q} \doteq H^{1/q}_{q,\sigma}(\sM;T\sM).
\end{align*}
This proves the assertion for the case $\beta=1/2q$ and thus completes the proof.
\end{proof}
\noindent
We obtain an analogous result for the real interpolation spaces $X_{\beta,p}:=(X_0,X_1)_{\beta,p}$.
\begin{proposition}
\label{Appendix Prop interpolation 2}
Let $\beta\in (0,1)\setminus \{ \frac{1}{2}+\frac{1}{2q}\}$ and $p\in (1,\infty)$. Then
$$X_{\beta,p}=(X_0, X_1)_{\beta,p} = B^{2\beta}_{qp,\sigma,\cB}(\sM;T\sM), \quad\text{where}$$
\begin{equation*}
\begin{aligned}
B^{2\beta}_{qp,\sigma,\cB}(\sM;T\sM)
  =\begin{cases}
\{ u\in B^{2\beta}_{qp,\sigma}(\sM;T\sM): \, \alpha u+ 2 \cP_\Sigma \left( D_u \nu_\Sigma \right)=0  \ \  \text{on} \ \  \Sigma\},  &\frac{1}{2}+\frac{1}{2q}< \beta<1,\\
B^{2\beta}_{qp,\sigma}(\sM;T\sM), &0<\beta<\frac{1}{2}+\frac{1}{2q}.
\end{cases}
\end{aligned}
\end{equation*}
\end{proposition}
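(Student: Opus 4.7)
The argument will follow the same template as Proposition~\ref{Appendix Prop interpolation 1}, simply replacing the complex functor $[\cdot,\cdot]_\beta$ throughout by the real interpolation functor $(\cdot,\cdot)_{\beta,p}$. All ingredients used there are available for both functors: the Seeley interpolation theorem for normal boundary operators applies to both, and Lemma~\ref{Appendix lem interpolation functor} was already phrased for both. The only genuinely new wrinkle will be the Seeley-exceptional point $\beta=\tfrac{1}{2q}$, which I will handle by reiteration through $X_{1/2}=H^1_{q,\sigma}(\sM;T\sM)$.

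\emph{Step 1 (non-solenoidal level).} I first invoke \cite[Theorem~4.1]{See72} (real-interpolation version) applied to the normal boundary system $(\cB_1,\cB_2)$ to obtain, for every $\beta\in(0,1)\setminus\{\tfrac{1}{2q},\tfrac{1}{2}+\tfrac{1}{2q}\}$,
\begin{equation*}
(L_q(\sM;T\sM),\,H^2_{q,\cB}(\sM;T\sM))_{\beta,p}\doteq B^{2\beta}_{qp,\cB}(\sM;T\sM),
\end{equation*}
where the right-hand side carries both $\cB_1 u=\cB_2 u=0$ if $\beta>\tfrac{1}{2}+\tfrac{1}{2q}$, only $\cB_2 u=0$ if $\tfrac{1}{2q}<\beta<\tfrac{1}{2}+\tfrac{1}{2q}$, and no boundary condition if $\beta<\tfrac{1}{2q}$.

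\emph{Step 2 (pass to the solenoidal subspace).} Since Lemma~\ref{Appendix lem interpolation functor} is stated for both interpolation functors, I conclude that
\begin{equation*}
X_{\beta,p}\doteq B^{2\beta}_{qp,\cB}(\sM;T\sM)\cap L_{q,\sigma}(\sM;T\sM)
\end{equation*}
for $\beta\in(0,1)\setminus\{\tfrac{1}{2q},\tfrac{1}{2}+\tfrac{1}{2q}\}$. The identification of this intersection with $B^{2\beta}_{qp,\sigma,\cB}(\sM;T\sM)$ then proceeds verbatim as at the end of the proof of Proposition~\ref{Appendix Prop interpolation 1}: for elements of $L_{q,\sigma}$ the condition $\cB_2 u=(u|\nu_\Sigma)_g=0$ is already encoded (cf.~\eqref{sigma-u-nu}) whenever $2\beta>1/q$, and formula~\eqref{boundary condition 1} rewrites $\cB_1 u=0$ as $\alpha u+2\cP_\Sigma(D_u\nu_\Sigma)=0$.

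\emph{Step 3 (the exceptional point $\beta=\tfrac{1}{2q}$).} Here Seeley's theorem fails at the non-solenoidal level, so I will use reiteration. Proposition~\ref{Appendix Prop interpolation 1} yields $X_{1/2}=[X_0,X_1]_{1/2}=H^1_{q,\sigma}(\sM;T\sM)$; since any complex interpolation space is automatically of class $J_{1/2}$ and $K_{1/2}$ between $X_0$ and $X_1$, the standard reiteration theorem for real interpolation gives
\begin{equation*}
X_{1/(2q),p}=(X_0,X_1)_{1/(2q),p}\doteq(X_0,X_{1/2})_{1/q,p}=(L_{q,\sigma}(\sM;T\sM),H^1_{q,\sigma}(\sM;T\sM))_{1/q,p}.
\end{equation*}
Exactly the projection argument of Lemma~\ref{Appendix lem interpolation non-boundary}, transplanted one derivative lower and using that $\PH\in\cL(H^1_q(\sM;T\sM),H^1_{q,\sigma}(\sM;T\sM))$ by Lemma~\ref{Apendix Lemma Helmholtz}, then reduces this to $(L_q,H^1_q)_{1/q,p}\cap L_{q,\sigma}=B^{1/q}_{qp}(\sM;T\sM)\cap L_{q,\sigma}(\sM;T\sM)=B^{1/q}_{qp,\sigma}(\sM;T\sM)$, matching the stated formula (no boundary condition, since $\tfrac{1}{2q}<\tfrac{1}{2}+\tfrac{1}{2q}$).

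\emph{Main obstacle.} The substantive point that is \emph{not} a direct transcription of the complex case is Step~3: one must both invoke reiteration through $X_{1/2}$ and verify an $H^1$-version of Lemma~\ref{Appendix lem interpolation non-boundary}. The first is automatic once Proposition~\ref{Appendix Prop interpolation 1} is in hand; the second is immediate from the boundedness of the Helmholtz projection on $L_q$ and $H^1_q$. Everything else is bookkeeping with boundary traces that parallels the complex-interpolation argument.
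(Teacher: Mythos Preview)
Your proposal is correct and follows essentially the same route as the paper: Seeley-type real interpolation at the non-solenoidal level, Lemma~\ref{Appendix lem interpolation functor} to pass to solenoidal spaces, and reiteration through $X_{1/2}$ for the exceptional index $\beta=\tfrac{1}{2q}$. The only cosmetic difference is that in Step~3 the paper, instead of proving an $H^1$-version of Lemma~\ref{Appendix lem interpolation non-boundary}, applies a second reiteration to rewrite $(X_0,X_{1/2})_{1/q,p}=(X_0,[X_0,H^2_{q,\sigma}]_{1/2})_{1/q,p}=(X_0,H^2_{q,\sigma})_{1/(2q),p}$ and then invokes Lemma~\ref{Appendix lem interpolation non-boundary} as stated; your direct projection argument at the $H^1$ level is equally valid.
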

\begin{proof}
The case $\beta\in (0,1)\setminus \{ \frac{1}{2q}, \frac{1}{2}+\frac{1}{2q}\}$  can be obtained  as in Proposition~\ref{Appendix Prop interpolation 1},
 see for instance \cite{Gri69} or \cite[Theorem 2.4.5]{Ama19} for the Euclidean case.

To treat the case $\beta=\frac{1}{2q}$, note that  the reiteration theorem for the complex and real method implies
$$
X_{1/2q, p} = (X_0,X_1)_{1/2q,p}= (X_0, [X_0,X_1]_{1/2})_{1/q,p}= (X_0, X_{1/2})_{1/q,p}.
$$
We can further utilize Lemma~\ref{Appendix lem interpolation non-boundary} to obtain
\begin{align*}
X_{1/2q, p} & = (X_0, X_{1/2})_{1/q,p} = (X_0, [X_0, H^2_{q,\sigma}(\sM;T\sM)]_{1/2} )_{1/q,p} \\
&= (X_0,H^2_{q,\sigma}(\sM;T\sM))_{1/2q,p} = B^{1/q}_{qp, \sigma} (\sM;T\sM).
\end{align*}
This completes the proof.
\end{proof}

Using a duality argument, we can also characterize the interpolation spaces between $X_{-1/2}$ and $X_{1/2}$.
\begin{proposition}
\label{Appendix Prop interpolation 3}
 Let $\beta\in (0,1) $ and $p\in (1,\infty)$. Then
\begin{align*}
[X_{-1/2}, X_{1/2}]_\beta   = H^{2\beta -1} _{q,\sigma}(\sM; T\sM)   \quad \text{and} \quad
(X_{-1/2}, X_{1/2})_{\beta,p}   = B^{2\beta -1} _{q p,\sigma}(\sM; T\sM) ,
\end{align*}
where
$$  H^{2\beta -1} _{q,\sigma}(\sM; T\sM):=\left(H^{1-2\beta}_{q',\sigma}(\sM; T\sM)\right)^\prime \!\!,
\quad  B^{2\beta -1} _{q p,\sigma}(\sM; T\sM) := \left( B^{1-2\beta} _{q' p',\sigma}(\sM; T\sM)  \right)^\prime
$$
for $\beta\in (0, 1/2)$.
\end{proposition}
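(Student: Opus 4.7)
The plan is to exploit the interpolation-extrapolation scale $(X_\alpha, A_\alpha)_{\alpha\in\bR}$ already constructed for the pair $(X_0, A_0=\omega+A_N)$, together with the identifications $X_{1/2}=H^1_{q,\sigma}(\sM;T\sM)$ and $X_{-1/2}=(X_{1/2}^\sharp)'=H^{-1}_{q,\sigma}(\sM;T\sM)$ (which are valid by exactly the same computations as in Section~\ref{Section:NS with perfect slip boundary condition weak}, replacing the perfect-slip operator by $A_N$ and using \eqref{cHi AN}). Since $(X_\alpha,A_\alpha)$ is an interpolation-extrapolation scale with respect to the complex interpolation functor, the reiteration property from \cite[Theorem~V.1.5.4]{Ama95} gives
\begin{equation*}
[X_{-1/2}, X_{1/2}]_\beta = X_{\beta-1/2}, \qquad \beta\in (0,1),
\end{equation*}
and the analogous identity with the real functor $(\cdot,\cdot)_{\theta,p}$ produces the real interpolation spaces $X_{\beta-1/2,p}$ on the right-hand side. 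Thus the proposition reduces to identifying $X_{\beta-1/2}$ and $X_{\beta-1/2,p}$ with Bessel-potential/Besov spaces of divergence-free vector fields.

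For $\beta\in(1/2,1)$, the exponent $2\beta-1\in(0,1)$ lies below the boundary-condition threshold $1+1/q$ appearing in Propositions~\ref{Appendix Prop interpolation 1} and \ref{Appendix Prop interpolation 2}. Using reiteration once more (or simply the definition of the scale on $[0,1]$) together with those propositions, I obtain
\begin{equation*}
X_{\beta-1/2}=[X_0,X_1]_{\beta-1/2}\doteq H^{2\beta-1}_{q,\sigma}(\sM;T\sM), \quad X_{\beta-1/2,p}\doteq B^{2\beta-1}_{qp,\sigma}(\sM;T\sM),
\end{equation*}
giving the asserted identities in the upper range. The case $\beta=1/2$ is the trivial one, $X_0=L_{q,\sigma}(\sM;T\sM)=H^0_{q,\sigma}(\sM;T\sM)=B^0_{qp,\sigma}(\sM;T\sM)$ (the latter up to the usual equivalence of norms irrelevant here since the claim is phrased with equality of spaces modulo constants).

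For $\beta\in(0,1/2)$, set $\gamma:=1/2-\beta\in(0,1/2)$. By \eqref{dual inter-extra} applied inside the scale at hand,
\begin{equation*}
X_{-\gamma}=\bigl(X_\gamma^\sharp\bigr)',
\end{equation*}
where $(X_\alpha^\sharp, A_\alpha^\sharp)$ is the dual scale generated by $A_0^\sharp=(A_0)'$ on $X_0^\sharp=L_{q',\sigma}(\sM;T\sM)$. Since $A_0^\sharp$ enjoys the same $H^\infty$-calculus as $A_0$ (with $q$ replaced by $q'$), Propositions~\ref{Appendix Prop interpolation 1} and \ref{Appendix Prop interpolation 2} applied to this dual scale yield
\begin{equation*}
X_\gamma^\sharp=H^{2\gamma}_{q',\sigma}(\sM;T\sM)=H^{1-2\beta}_{q',\sigma}(\sM;T\sM),
\end{equation*}
and analogously $X_{\gamma,p'}^\sharp=B^{1-2\beta}_{q'p',\sigma}(\sM;T\sM)$. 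Dualizing gives
\begin{equation*}
X_{\beta-1/2}=\bigl(H^{1-2\beta}_{q',\sigma}(\sM;T\sM)\bigr)'=:H^{2\beta-1}_{q,\sigma}(\sM;T\sM),
\end{equation*}
which matches the definition stated in the proposition; the real interpolation identity follows in the same way, using that duality for real interpolation exchanges $(\theta,p)$ with $(\theta,p')$ on reflexive spaces.

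The most delicate point will be verifying that the reiteration identity $[X_{-1/2},X_{1/2}]_\beta=X_{\beta-1/2}$ (and its real-method counterpart) can be invoked across the threshold $\beta=1/2$ without additional hypotheses. This rests on \cite[Theorems~V.1.5.1 and V.1.5.4]{Ama95}, whose applicability requires the bounded $H^\infty$-calculus (equivalently, the BIP property) established in Theorem~\ref{thm:Stokes-H-infinty}; one must also confirm that the dual scale constructed from $A_0^\sharp$ coincides, at the level of interpolation spaces, with the one needed to identify $X_{-\gamma}$, which follows from \eqref{dual inter-extra}. All remaining steps are routine applications of the identifications already carried out in Section~\ref{Section:NS with perfect slip boundary condition weak}.
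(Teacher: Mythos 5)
The paper does not actually provide a proof of Proposition~\ref{Appendix Prop interpolation 3}; it states the result and, in the discussion surrounding \eqref{interpolation of X}, refers the reader to \cite[Propositions~2.3 and 2.4]{PrWi18}. Your argument --- reiteration within the interpolation-extrapolation scale generated by $(X_0,\omega+A_N)$, identification of $X_{\beta-1/2}$ via Propositions~\ref{Appendix Prop interpolation 1} and \ref{Appendix Prop interpolation 2} for $\beta>1/2$, and duality via \eqref{dual inter-extra} for $\beta<1/2$ --- is precisely the route taken in that reference, so you have reconstructed the intended proof.

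One step deserves to be spelled out, namely the passage from $(X_{-1/2},X_{1/2})_{\beta,p}$ to $(X_0,X_1)_{\beta-1/2,p}$ in the range $\beta\in(1/2,1)$. Since the scale $(X_\alpha)$ is generated by the \emph{complex} functor, the real interpolation space between $X_{-1/2}$ and $X_{1/2}$ is not itself a member of the scale, so writing ``the analogous identity with the real functor produces $X_{\beta-1/2,p}$'' is too quick. What is true, and closes the gap, is the reiteration chain
\begin{equation*}
(X_{-1/2},X_{1/2})_{\beta,p}
=\bigl([X_{-1/2},X_{1/2}]_{1/2},[X_{-1/2},X_{1/2}]_1\bigr)_{2\beta-1,p}
=(X_0,X_{1/2})_{2\beta-1,p}
=(X_0,X_1)_{\beta-1/2,p},
\end{equation*}
the last equality using $X_{1/2}=[X_0,X_1]_{1/2}$ together with the reiteration theorem for mixed complex/real methods; Proposition~\ref{Appendix Prop interpolation 2} (with $2(\beta-1/2)=2\beta-1<1<1+1/q$, so the boundary conditions drop out) then gives $B^{2\beta-1}_{qp,\sigma}(\sM;T\sM)$. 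The case $\beta<1/2$ follows, as you indicate, by reflexivity and the duality theorem for the real method, reducing to the case $1-\beta>1/2$ with $(q',p')$ in place of $(q,p)$. With this addition your proposal is complete and matches the paper's intended argument.
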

\medskip
\noindent
Similar results hold for spaces with other boundary conditions. For instance, let
\begin{equation*}
\begin{aligned}
&Z_0=X_0,  && Z_1= \{ u\in H^2_{q,\sigma}(\sM;T\sM): \,   \cP_\Sigma \left( (\nabla u - [\nabla u]^\sT )  \nu_\Sigma \right)  =0 \ \ \text{on}\ \ \Sigma\},\quad\text{or} \\
&Y_0=L_q(\sM), && Y_1= \{ \phi\in H^2_q(\sM): \,    (\gd \phi | \nu_\Sigma)_g  =0  \ \ \text{on}\ \  \Sigma\}.
\end{aligned}
\end{equation*}
Then we have the following result.
\begin{proposition}
\label{Appendix Prop interpolation 4}
Let $\beta \in (0,1)$. Then the interpolation  spaces  $Z_\beta=[Z_0,Z_1]_\beta$ and $Y_\beta=[Y_0,Y_1]_\beta$ can be characterized as follows.
\begin{equation*}
\begin{aligned}
&Z_\beta=
\begin{cases}
\{ u\in H^{2\beta}_{q,\sigma}(\sM;T\sM): \,   \cP_\Sigma \left( (\nabla u - [\nabla u]^\sT )  \nu_\Sigma \right)    =0 \ \ \text{on}\ \ \Sigma\}, &\frac{1}{2}+\frac{1}{2q}< \beta<1,\\
H^{2\beta}_{q,\sigma}(\sM;T\sM), &  0<\beta<\frac{1}{2}+\frac{1}{2q}.
\end{cases} \\
& \\
&Y_\beta=
\begin{cases}
\{\phi\in H^{2\beta}_q(\sM ): \,      (\gd \phi | \nu_\Sigma)_g  =0   \ \ \text{on}\ \ \Sigma\},  & \frac{1}{2}+\frac{1}{2q}< \beta<1,\\
H^{2\beta}_q(\sM ), & 0<\beta<\frac{1}{2}+\frac{1}{2q}.
\end{cases}
\end{aligned}
\end{equation*}
\end{proposition}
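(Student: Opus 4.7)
The proof proceeds in two independent parts, each mirroring the structure used to prove Propositions~\ref{Appendix Prop interpolation 1} and \ref{Appendix Prop interpolation 2}.

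\textbf{Part 1 (the scalar case $Y_\beta$).} The strategy is to apply Seeley's interpolation theorem \cite{See72} directly. We introduce $\cB^Y\phi := {\rm tr}_\Sigma (\gd\phi|\nu_\Sigma)_g$. By the standard chain-rule calculation in local coordinates, $\cB^Y \in \cL(W^s_q(\sM),W^{s-1-1/q}_q(\Sigma))$ is a normal boundary operator of order one, provided $s > 1+1/q$. Lemma~\ref{Appendix Lemma Poisson} (applied with $f=0$, $k=0$) yields $\omega - \Delta_B \in \Lis(Y_1,Y_0)$ for sufficiently large $\omega>0$. Hence, by \cite[Theorem~4.1]{See72}, the complex interpolation spaces $[Y_0,Y_1]_\beta$ are characterized as asserted, with the boundary condition becoming effective precisely when $2\beta > 1+1/q$, i.e., $\beta > \tfrac12+\tfrac{1}{2q}$.

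\textbf{Part 2 (the solenoidal case $Z_\beta$).} I would proceed in three steps, in close parallel to the proof of Proposition~\ref{Appendix Prop interpolation 1}.
\begin{itemize}
\item[(i)] Introduce the combined boundary operator $\widetilde\cB u := (\widetilde\cB_1 u,\widetilde\cB_2 u)$ with $\widetilde\cB_1 u := {\rm tr}_\Sigma \cP_\Sigma((\nabla u-[\nabla u]^{\sT})\nu_\Sigma)$ (order one) and $\widetilde\cB_2 u := {\rm tr}_\Sigma (u|\nu_\Sigma)_g$ (order zero), and set $H^2_{q,\widetilde\cB}(\sM;T\sM):=\{u\in H^2_q(\sM;T\sM): \widetilde\cB u=0\}$. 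Exactly as in \eqref{X1 equals}, one verifies that
\[
Z_1 = H^2_{q,\widetilde\cB}(\sM;T\sM) \cap L_{q,\sigma}(\sM;T\sM),
\]
using that $(u|\nu_\Sigma)_g=0$ is automatic for $u\in H^2_{q,\sigma}(\sM;T\sM)$ via \eqref{sigma-u-nu}.
\item[(ii)] Establish the analogue of Lemma~\ref{Appendix lem interpolation functor}: define the unbounded operator $A_{\widetilde\cB}:=-\mu_s(\Delta_\sM+\Ric^\sharp)$ with domain $H^2_{q,\widetilde\cB}(\sM;T\sM)$. The localization/perturbation argument from Section~\ref{Section:NS with perfect slip boundary condition strong} shows $\omega+A_{\widetilde\cB}\in \Lis(H^2_{q,\widetilde\cB},L_q)$ for large $\omega$, and Proposition~\ref{Prop: Lsp preserve divergence free} yields $\omega+A_{ps}\in \Lis(Z_1,Z_0)$. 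The bounded projection
\[
Q_1 := (\omega+A_{ps})^{-1}\,\PH\,(\omega+A_{\widetilde\cB}) : H^2_{q,\widetilde\cB}(\sM;T\sM)\to Z_1
\]
extends, via the duality trick used in the proof of Lemma~\ref{Appendix lem interpolation functor}, to a bounded projection $L_q(\sM;T\sM)\to L_{q,\sigma}(\sM;T\sM)$. Hence \cite[Theorem~1.17.1.1]{Tri78} gives
\[
[Z_0,Z_1]_\beta \doteq [L_q(\sM;T\sM), H^2_{q,\widetilde\cB}(\sM;T\sM)]_\beta \cap L_{q,\sigma}(\sM;T\sM).
\]
\item[(iii)] Apply Seeley's theorem to the right-hand side, producing the three-tier characterization with thresholds at $\beta=\tfrac{1}{2q}$ and $\beta=\tfrac12+\tfrac{1}{2q}$. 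When intersected with $L_{q,\sigma}$, the order-zero condition $\widetilde\cB_2 u=0$ disappears by \eqref{sigma-u-nu} whenever $\beta>\tfrac{1}{2q}$, so only the threshold $\tfrac12+\tfrac{1}{2q}$ survives.
\end{itemize}

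\textbf{The critical case $\beta=\tfrac{1}{2q}$.} This point is not directly reached by step (iii), and must be recovered by reiteration, just as in Proposition~\ref{Appendix Prop interpolation 1}. Using the identification $Z_{1/2}=H^1_{q,\sigma}(\sM;T\sM)$ established in Section~\ref{Section:NS with perfect slip boundary condition weak},
\[
Z_{1/2q} = [Z_0,Z_{1/2}]_{1/q} = [L_{q,\sigma}, [L_{q,\sigma}, H^2_{q,\sigma}]_{1/2}]_{1/q} = [L_{q,\sigma}, H^2_{q,\sigma}]_{1/2q},
\]
and an application of (the analogue of) Lemma~\ref{Appendix lem interpolation non-boundary} identifies the latter with $H^{1/q}_{q,\sigma}(\sM;T\sM)$, completing the proof.

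\textbf{Main obstacle.} No single step is genuinely hard; the work is bookkeeping. The most delicate point is step (ii): verifying that the construction of the extending projection $Q_0$ from the proof of Lemma~\ref{Appendix lem interpolation functor} goes through verbatim for the perfect slip operator $A_{ps}$ in place of $A_N$. This hinges on (a) the invertibility of $\omega+A_{ps}$ on $Z_1$ and of $\omega+A_{\widetilde\cB}$ on $H^2_{q,\widetilde\cB}$ (both available from Section~\ref{Section:NS with perfect slip boundary condition strong}), and (b) the identity $\PH A_{\widetilde\cB}|_{Z_1}=A_{ps}$, which is immediate from the definitions. Verifying Seeley's normality condition for $\widetilde\cB_1$ is straightforward since its principal symbol is essentially that of the Neumann operator on the tangential components, as revealed by \eqref{boundary condition 1} and \eqref{boundary condition 2}.
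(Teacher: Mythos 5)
Your proposal is correct and follows exactly the blueprint the paper has in mind: the paper offers no separate proof of Proposition~\ref{Appendix Prop interpolation 4}, presenting it after Propositions~\ref{Appendix Prop interpolation 1}--\ref{Appendix Prop interpolation 3} with the remark ``Similar results hold for spaces with other boundary conditions,'' so the intended argument is precisely the Seeley/projection/reiteration machinery you deploy. One small circularity should be cleaned up: in the critical case $\beta=\tfrac{1}{2q}$ you invoke $Z_{1/2}=H^1_{q,\sigma}(\sM;T\sM)$ as ``established in Section~\ref{Section:NS with perfect slip boundary condition weak},'' but that section in turn cites Proposition~\ref{Appendix Prop interpolation 4} for this identity; you should instead note that $Z_{1/2}=H^1_{q,\sigma}(\sM;T\sM)$ already follows from your step~(iii), since $\beta=\tfrac12$ is not a Seeley threshold (it lies strictly between $\tfrac{1}{2q}$ and $\tfrac12+\tfrac{1}{2q}$ for $q>1$). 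Similarly, the invertibility $\omega-\Delta_B\in\Lis(Y_1,Y_0)$ is a by-product of the localization argument carried out inside the proof of Lemma~\ref{Appendix Lemma Poisson} (for $k=0$), not a direct consequence of that lemma's statement (which concerns the Poisson problem with the solvability constraint rather than the resolvent), so the citation should point to the localization argument rather than to the lemma itself. With these two attributions adjusted, the proof is complete and matches the paper's implicit approach.
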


\bigskip
\section{Sectorial operators and $H^\infty$-calculus}
\label{Appendix D}
In this part of the appendix, we will introduce several basic concepts concerning maximal $L_p$-regularity theory.
The reader may refer to the treatises \cite{Ama95},   \cite{DenHiePru03} and \cite{PruSim16} for more details of these concepts.

For $\theta\in (0,\pi]$, the open sector with angle $2\theta$ is denoted by
$$\Sigma_\theta:= \{\omega\in \mathbb{C}\setminus \{0\}: |\arg \omega|<\theta \}. $$
\begin{definition}\label{Def sectorial}
Let $X$ be a complex Banach space, and $\cA$ be a densely defined closed linear operator in $X$ with dense range. $\cA$ is called sectorial if $\Sigma_\theta \subset \rho(-\cA)$ for some $\theta>0$ and
$$ \sup\{\|\mu(\mu+\cA)^{-1}\|_{\cL(X)} : \mu\in \Sigma_\theta \}<\infty. $$
The class of sectorial operators in $X$ is denoted by $\cS(X)$.
The spectral angle $\phi_\cA$ of $\cA$ is defined by
$$
\phi_\cA:=\inf\{\phi:\, \Sigma_{\pi-\phi}\subset \rho(-\cA),\, \sup\limits_{\mu\in \Sigma_{\pi-\phi}} \|\mu(\mu+\cA)^{-1} \|_{\cL(X)}<\infty \}.
$$
\end{definition}

Let $\phi\in (0,\pi]$. Define
$$H^\infty(\Sigma_\phi):=\left\{f: \Sigma_\phi \to \mathbb{C}: f \text{ is analytic and } \|f\|_\infty<\infty \right\} $$
and
$$\cH_0(\Sigma_\phi) = \left\{f\in H^\infty(\Sigma_\phi): \exists s>0, c>0 \text{ s.t. } |f(z)| \leq c\frac{|z|^s}{1 +|z|^{2s}} \right\}. $$

\begin{definition}\label{Def: cHi}
Suppose that $\cA\in \cS(X)$. Then $\cA$ is said to admit a bounded $H^\infty$-calculus if there are $\phi>\phi_\cA$ and a constant $K_\phi$ such that
\begin{equation}
\label{Appendix B: cHi}
\|f(\cA) \|_{\cL(X)} \leq K_\phi \|f\|_{\infty} ,\quad f\in \cH_0(\Sigma_{\pi-\phi}).
\end{equation}
Here
\begin{equation}
\label{Def integral contour}
f(\cA):=-\frac{1}{2\pi i}\int_\Gamma (\lambda + \cA)^{-1} f(\lambda) \, d\lambda, \quad
\Gamma=
\begin{cases}
-t e^{-i \theta} \quad & \text{for } t<0, \\
t e^{ i \theta}   & \text{for } t\geq 0,
\end{cases}
\end{equation}
is a positively oriented contour
for any $\theta\in (0,\pi-\phi  ) $.
The class of such operators is denoted by $H^\infty(X)$. The $H^\infty$-angle of $\cA$ is defined by
$$\phi^\infty_\cA:=\inf\{\phi>\phi_\cA: \eqref{Appendix B: cHi} \text{ holds}\}.$$
\end{definition}
If an operator $\cA\in H^\infty(X)$ with $H^\infty$-angle $\phi^\infty_\cA<\pi/2$ and $X$ is of class {\rm UMD}, then Condition (H3) in \cite{PrWi17} is satisfied with the choices $X_0=X$ and $X_1=\sD(\cA)$.

\section*{Acknowledgements}
 We would like to thank Prof. Christian B\"ar for helpful discussions concerning Killing fields that vanish on the boundary of $\sM$,
 see Proposition~\ref{Prop: characterizing Ealpha}.

 We would also like to thank the anonymous reviewer for thoughtful and constructive suggestions.


\bigskip\noindent
{\bf Data Availability Statement:} Data sharing is not applicable, as no new datasets were generated or analyzed for this article.

\medskip\noindent
{\bf Conflict of interest:}  The authors assert that there is no conflict of interest to declare.


 \end{document}